\documentclass[12pt]{amsart}
\usepackage{amscd,amssymb,amsthm,amsmath,amssymb,mathrsfs,enumerate, longtable,diagbox}
\usepackage[matrix,arrow,curve]{xy}
\usepackage[margin=2cm]{geometry}
\usepackage{hyperref}
\usepackage{tikz}
\usetikzlibrary{automata, positioning}

\newtheorem{theorem}{Theorem}[section]

\newtheorem{lemma}[theorem]{Lemma}
\newtheorem*{lemma*}{Lemma}

\newtheorem*{maintheorem*}{Main Theorem}
\newtheorem*{corollary*}{Corollary}
\newtheorem*{conjecture*}{Conjecture}

\theoremstyle{definition}

\newtheorem*{example*}{Example}

\theoremstyle{remark}
\newtheorem{remark}[theorem]{Remark}

\sloppy\pagestyle{plain}

\makeatletter\@addtoreset{equation}{section} \makeatother

\renewcommand\arraystretch{1.44}

%\allowdisplaybreaks

\makeatletter
\renewcommand*\env@matrix[1][\arraystretch]{%
  \edef\arraystretch{#1}%
  \hskip -\arraycolsep
  \let\@ifnextchar\new@ifnextchar
  \array{*\c@MaxMatrixCols c}}
\makeatother

\author{Ivan Cheltsov, Kento Fujita, Takashi Kishimoto, Jihun Park}

\title{K-stable Fano 3-folds in the~families \textnumero 2.18 and \textnumero 3.4}

\begin{document}

\begin{abstract}
We prove that smooth Fano 3-folds in the~families \textnumero 2.18 and \textnumero 3.4 are K-stable.
\end{abstract}

\subjclass[2010]{14J45, 32Q20.}

\address{ \emph{Ivan Cheltsov}\newline
\textnormal{University of Edinburgh, Edinburgh, Scotland
\newline
\texttt{i.cheltsov@ed.ac.uk}}}

\address{\emph{Kento Fujita}
\newline
\textnormal{Osaka University, Osaka, Japan}
\newline
\textnormal{\texttt{fujita@math.sci.osaka-u.ac.jp}}}

\address{ \emph{Takashi Kishimoto}\newline \textnormal{Saitama University, Saitama, Japan
\newline
\texttt{kisimoto.takasi@gmail.com}}}

\address{ \emph{Jihun Park}\newline \textnormal{Institute for Basic Science, Pohang, Korea \newline POSTECH, Pohang, Korea \newline
\texttt{wlog@postech.ac.kr}}}

\maketitle

\tableofcontents

Throughout this paper, all varieties are assumed to be projective and defined over~$\mathbb{C}$.

\section{Introduction}
\label{section:intro}

Smooth Fano threefolds are classified into $105$  families labeled as \textnumero 1.1, \textnumero 1.2, \textnumero 1.3,   $\ldots$, \textnumero 10.1.
For the~description of these families, see \cite{IskovskikhProkhorov}.
It has been proved in~\cite{Book,Fujita2019,Fujita2021} that the~families
\begin{center}
\textnumero 2.23, \textnumero 2.26, \textnumero 2.28, \textnumero 2.30, \textnumero 2.31, \textnumero 2.33, \textnumero 2.35, \textnumero 2.36, \textnumero 3.14, \\
\textnumero 3.16, \textnumero 3.18, \textnumero 3.21, \textnumero 3.22, \textnumero 3.23, \textnumero 3.24, \textnumero 3.26, \textnumero 3.28, \textnumero 3.29, \\
\textnumero 3.30, \textnumero 3.31, \textnumero 4.5,  \textnumero 4.8,  \textnumero 4.9,  \textnumero 4.10, \textnumero 4.11, \textnumero 4.12, \textnumero 5.2
\end{center}
do not have smooth K-polystable members, and general members of other families are K-polystable.
For $56$ families, K-polystable smooth Fano 3-folds are described in \cite{AbbanZhuangSeshadri,Book,BelousovLoginov,CheltsovDenisovaFujita,CheltsovPark,Denisova,LTN,Liu,Malbon,XuLiu,CheltsovFujitaKishimotoOkada}.
The remaining $21$ deformation families are:
\begin{center}
\textnumero 1.9, \textnumero 1.10, \textnumero 2.5, \textnumero 2.9, \textnumero 2.10, \textnumero 2.11, \textnumero 2.12, \textnumero 2.13, \textnumero 2.14, \textnumero 2.16, \\
\textnumero 2.17, \textnumero 2.18, \textnumero 2.19, \textnumero 2.20, \textnumero 3.2, \textnumero 3.4, \textnumero 3.5,    \textnumero 3.6, \textnumero 3.7, \textnumero 3.8, \textnumero 3.11.
\end{center}
The families \textnumero 1.10, \textnumero 2.20, \textnumero 3.5, \textnumero 3.8 contain both K-polystable and non-K-polystable members,
and all smooth Fano threefolds in the~families
\begin{center}
\textnumero 1.9, \textnumero 2.5, \textnumero 2.9, \textnumero 2.10, \textnumero 2.11, \textnumero 2.12, \textnumero 2.13, \textnumero 2.14, \\
\textnumero 2.16, \textnumero 2.17, \textnumero 2.18, \textnumero 2.19, \textnumero 3.2, \textnumero 3.4, \textnumero 3.6, \textnumero 3.7, \textnumero 3.11
\end{center}
are conjectured to be K-stable \cite{Book}.
In this paper, we verify this conjecture for two families:

\begin{maintheorem*}
All smooth Fano 3-folds in the~families \textnumero 2.18 and \textnumero 3.4 are K-stable.
\end{maintheorem*}

Hence, to find all smooth K-polystable Fano 3-folds, one have to deal with $19$ families
\begin{center}
\textnumero 1.9, \textnumero 1.10, \textnumero 2.5, \textnumero 2.9, \textnumero 2.10, \textnumero 2.11, \textnumero 2.12, \textnumero 2.13, \textnumero 2.14, \\
\textnumero 2.16, \textnumero 2.17, \textnumero 2.19, \textnumero 2.20, \textnumero 3.2, \textnumero 3.5,    \textnumero 3.6, \textnumero 3.7, \textnumero 3.8, \textnumero 3.11.
\end{center}

To describe smooth Fano 3-folds in the~families \textnumero 2.18 and \textnumero 3.4,
let $V\to\mathbb{P}^1\times\mathbb{P}^2$ be a~double cover branched along a~smooth surface of degree $(2,2)$,
let $V\to\mathbb{P}^2$ be the~composition of this double cover and the~projection $\mathbb{P}^1\times\mathbb{P}^2\to\mathbb{P}^2$,
and let $X\to V$ be the~blow up of a~smooth fiber of this composition morphism.
Then we have the~following commutative diagram:
$$
\xymatrix{
X\ar@{->}[rr]\ar@{->}[d]&&V\ar@{->}[d]\\
\mathbb{P}^1\times\mathbb{F}_1\ar@{->}[rr]&&\mathbb{P}^1\times\mathbb{P}^2}
$$
where $\mathbb{F}_1$ is the~first Hirzebruch surface,
the~morphism $X\to\mathbb{P}^1\times\mathbb{F}_1$ is a~double cover ramified over the~proper transform on $\mathbb{P}^1\times\mathbb{F}_1$ of the~ramification surface of the~double cover $V\to\mathbb{P}^1\times\mathbb{P}^2$,
and~$\mathbb{P}^1\times\mathbb{F}_1\to\mathbb{P}^1\times\mathbb{P}^2$ is a birational morphism induced by the~blow up $\mathbb{F}_1\to\mathbb{P}^2$.
Then
\begin{itemize}
\item $V$ is a~smooth Fano 3-fold in the~deformation family \textnumero 2.18,
\item $X$ is a~smooth Fano 3-fold in the~deformation family \textnumero 3.4.
\end{itemize}
Furthermore, all smooth Fano 3-folds in these deformation families can be obtained in this way.

Let us say few words about the~proof of Main Theorem.
To prove that $V$ is K-stable, we recall from \cite{Dervan,Fujita2019b,LiuZhu,Zhuang} that
\begin{center}
the~Fano 3-fold $V$ is K-stable $\iff$ the~log Fano pair $\Big(\mathbb{P}^1\times\mathbb{P}^2,\frac{1}{2}R\Big)$ is K-stable,
\end{center}
where $R$ is the~ramification surface of the~double cover $V\to\mathbb{P}^1\times\mathbb{P}^2$.
In Section~\ref{section:2-18}, we prove that the log Fano pair $(\mathbb{P}^1\times\mathbb{P}^2,cR)$ is K-stable for every $c\in(0,1)\cap\mathbb{Q}$
using Abban--Zhuang~theory and the~technique developed in \cite{Book,Fujita2021}.
We refer the~reader to \cite[\S~1.7]{Book} and \cite[\S~4]{Fujita2021} for details.
Similarly, to prove that $X$ is K-stable, we prove that
the~log Fano pair $(\mathbb{P}^1\times\mathbb{F}_1,\frac{1}{2}R)$ is K-stable,
where now $R$ is the~ramification surface of the~double cover $X\to\mathbb{P}^1\times\mathbb{F}_1$.
The~proof is much more involved in this case,
because we have to resolve two deadlocks arising when $R$ is quite~special.
To~overcome~these difficulties, we apply Abban--Zhuang theory to exceptional surfaces of toric weighted blow ups of the~3-fold $\mathbb{P}^1\times\mathbb{F}_1$,
and use toric geometry to compute Zariski decompositions.
This is a new approach, which can resolve deadlocks in similar problems.

The structure of this paper is simple: we prove Main Theorem for the~family \textnumero 2.18 in Section~\ref{section:2-18},
and we prove Main Theorem for the~family \textnumero 3.4 in~Section~\ref{section:3-4}.
In Appendix~\ref{section:tables}, we put all the~tables necessary for the~Zariski decompositions discussed in Section~\ref{section:3-4}.

\smallskip

\textbf{Acknowledgements.}
The authors thank IBS Center for Geometry and Physics in Pohang,
Saitama University, and the~University of Edinburgh for their hospitality.

Ivan Cheltsov was supported by JSPS Invitational Fellowships for Research in Japan (S22040),
EPSRC Grant \textnumero\ EP/V054597/1, and Institut des Hautes Etudes Scientifiques (\mbox{Bures-sur-Yvette}).

Kento Fujita was supported by JSPS KAKENHI Grant Number 22K03269.

Takashi Kishimoto was supported by JSPS KAKENHI Grants Number 19K03395 and 23K03047.

Jihun Park was supported by  the~IBS project IBS-R003-D1, Institute for Basic Science in Korea.

\section{Smooth Fano 3-folds in the~family \textnumero 2.18}
\label{section:2-18}

Let $Y=\mathbb{P}^1\times\mathbb{P}^2$, let $R$ be a~smooth surface in $Y$ of degree $(2,2)$, and let $V\to Y$ be the~double cover branched over $R$.
Then $\mathrm{Aut}(V)$ is finite \cite{CheltsovShramovPrzyjalkowski}, so $V$ is K-stable if and only if $V$ is K-polystable.
On the~other hand, it follows from \cite{Dervan,Fujita2019b,LiuZhu,Zhuang} that
\begin{center}
$V$ is K-polystable $\iff$ $(Y,\frac{1}{2}R)$ is K-polystable.
\end{center}
Let $\Delta_Y=cR$ for $c\in[0,1)\cap\mathbb{Q}$. Then $(Y,\Delta_Y)$ is a log Fano pair for every $c\in[0,1)\cap\mathbb{Q}$.

\begin{theorem}
\label{theorem:2-18-K-stable}
The log Fano pair $(Y,\Delta_Y)$ is K-stable for every $c\in(0,1)\cap\mathbb{Q}$.
\end{theorem}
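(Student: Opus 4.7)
The plan is to verify the valuative criterion $\delta(Y,\Delta_Y)>1$ by showing $\delta_P(Y,\Delta_Y)>1$ at every closed point $P\in Y$, using Abban--Zhuang adjunction along a natural flag on $Y$ together with the techniques of \cite{Book,Fujita2021}. For a point $P=(t,q)\in\mathbb{P}^1\times\mathbb{P}^2$, I would begin the flag with $F=\mathrm{pr}_1^{-1}(t)\cong\mathbb{P}^2$, the $\mathbb{P}^2$-fiber through $P$. A direct computation with $-K_Y-\Delta_Y=(2-2c)H_1+(3-2c)H_2$ gives $A_{Y,\Delta_Y}(F)=1$ and $S_{Y,\Delta_Y}(F)=1-c$, so the first factor in the Abban--Zhuang bound is $1/(1-c)>1$ for every $c\in(0,1)$. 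This disposes of divisors over $Y$ whose center on $Y$ equals $F$, and reduces the problem to bounding the refined local threshold at $P$ on $(F,cR|_F)$ with respect to the refined linear system $W_{\bullet,\bullet}^F$.

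The second step is to extend the flag by a line $C\subset F$ through $P$ chosen according to the geometry of the plane conic $R|_F$ at $P$. One then computes the next-level invariants $S(W_{\bullet,\bullet}^F;C)$ and $S(W_{\bullet,\bullet,\bullet}^{F,C};P)$, together with the corresponding log discrepancies, and verifies that every ratio $A/S$ is strictly greater than $1$. The analysis splits naturally into cases: (i) $P\notin R$, handled by any line missing $R$; (ii) $R|_F$ is smooth (or at worst nodal) at $P$, handled by a line transverse or tangent to $R|_F$; (iii) $F$ is tangent to $R$ at $P$, so that $R|_F$ degenerates to two lines or a double line singular at $P$.

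The hardest part is case (iii): a generic line $C$ does not yield the needed inequality when the conic breaks up at $P$. In this regime I would take $C$ to be a component line of $R|_F$ through $P$, absorb $cC$ into the boundary, and compute the Zariski decompositions that enter the third-level integrals. The decisive input is the smoothness of the bidegree-$(2,2)$ divisor $R$: it forces $R$ to meet each $\mathbb{P}^2$-fiber in a conic whose singularities are controlled, bounds $\mathrm{mult}_P(R|_F)$, and rules out the most degenerate configurations. Combined with the strict bound $c<1$, this gives $\delta_P(Y,\Delta_Y)>1$ in every case and completes the proof.
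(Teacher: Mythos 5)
Your first step and your case (i) reproduce the paper's Lemma~\ref{lemma:2-18-easy-case}: for $P\not\in R$, the flag consisting of the $\mathbb{P}^2$-fiber $F$ through $P$ and a general line gives $\delta_P(Y,\Delta_Y)\geqslant\frac{3}{3-2c}>1$, and your values $A_{Y,\Delta_Y}(F)=1$, $S_L(F)=1-c$ are correct. The gap is in cases (ii) and (iii): the flag $(F,\ \text{line},\ P)$ cannot work at any point of $R$, no matter how the line is chosen. Since $L-uF$ is nef exactly for $u\in[0,2-2c]$ and $(L-uF)\vert_F$ has degree $3-2c$ independently of $u$, one computes for every line $\ell\subset F$ through $P$
$$
S_L\big(W^F_{\bullet,\bullet};\ell\big)=S_L\big(W^{F,\ell}_{\bullet,\bullet,\bullet};P\big)=\frac{3-2c}{3},
$$
so the Abban--Zhuang bound exceeds $1$ only when the relevant multiplicity of the boundary (along $\ell$ at the second level, at $P$ on $\ell$ at the third level) is strictly less than $\frac{2c}{3}$. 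If $P\in R$ this is impossible: a line meeting $R\vert_F$ transversally at $P$ contributes $\mathrm{ord}_P(\Delta_F\vert_\ell)=c$, a tangent line contributes $2c$, and a component line of $R\vert_F$ --- your choice in case (iii) --- contributes $\mathrm{ord}_\ell(\Delta_F)=c$ already at the second level. In each case the resulting ratio is at most $\frac{3(1-c)}{3-2c}<1$ for every $c\in(0,1)$. The obstruction is purely numerical; the smoothness of $R$, which you invoke as the decisive input, does not enter.

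This is precisely why the paper abandons the $\mathbb{P}^2$-fiber once $P\in R$. If the fiber $\mathbf{f}$ of $Y\to\mathbb{P}^2$ through $P$ is not contained in $R$ (Lemma~\ref{lemma:2-18-OK-case}), it starts instead from the degree-$(0,1)$ surface $S\cong\mathbb{P}^1\times\mathbb{P}^1$ (the preimage of a general line in $\mathbb{P}^2$ through the image of $P$), for which $S_L(S)=\frac{3-2c}{3}$, and refines by the exceptional curve of the blow up of $P$ on $S$, passing to a $(1,2)$-weighted blow up when $R_S$ is tangent to one of the two rulings. If $\mathbf{f}\subset R$ (Lemma~\ref{lemma:2-18-blow-up}), it blows up the curve $\mathbf{f}$ in $Y$ and runs the adjunction on the exceptional surface. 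Some device of this kind is unavoidable; as written, your argument does not close for points of $R$.
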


Let us prove Theorem~\ref{theorem:2-18-K-stable}. Set $L=-K_Y-\Delta_Y$. Then $L$ is a~divisor of degree $(2-2c,3-2c)$, so
$$
L^3=6(1-c)(3-2c)^2.
$$
Fix $c\in(0,1)\in\mathbb{Q}$. Let $P$ be a~point in $Y$. Recall that
$$
\delta_P(Y,\Delta_Y)=
\inf_{\substack{\mathbf{E}/Y\\ P\in C_Y(\mathbf{E})}}\frac{A_{Y,\Delta_Y}(\mathbf{E})}{S_{L}(\mathbf{E})},
$$
where the~infimum is taken over all prime divisors $\mathbf{E}$ over $Y$ whose centers on $Y$ contain~$P$, and
$$
S_{L}(\mathbf{E})=\frac{1}{L^3}\int\limits_0^\infty\mathrm{vol}\big(L-u\mathbf{E}\big)du.
$$
By \cite{Li,Fujita2019}, to prove that $(Y,\Delta_Y)$ is K-stable, it is enough to show that $\delta_P(Y,\Delta_Y)>1$.

\begin{lemma}
\label{lemma:2-18-easy-case}
Suppose that $P\not\in R$. Then $\delta_P(Y,\Delta_Y)>1$.
\end{lemma}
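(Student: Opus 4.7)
My plan is to combine the observation that $P\notin R$ removes the boundary from the relevant log discrepancies with an Abban--Zhuang reduction along the fibre of the first projection through $P$.

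First I would note that, since $P\notin R$, every prime divisor $\mathbf{E}$ over $Y$ whose centre contains $P$ has centre not contained in $R$; hence $\mathrm{ord}_{\mathbf{E}}(R)=0$ and $A_{Y,\Delta_Y}(\mathbf{E})=A_Y(\mathbf{E})$. In particular $\delta_P(Y,\Delta_Y)$ coincides with the $\delta$-invariant of $(Y;L)$ at $P$, so the task reduces to proving $A_Y(\mathbf{E})/S_L(\mathbf{E})>1$ for every such $\mathbf{E}$.

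Next I would apply the Abban--Zhuang inequality to the fibre $F$ of $\mathrm{pr}_1\colon Y\to\mathbb{P}^1$ through $P$. Here $F\cong\mathbb{P}^2$ and $F|_F=0$, so $L-uF$ has bidegree $(2-2c-u,\,3-2c)$; it is nef for $u\in[0,2-2c]$ and non-pseudo-effective beyond. The Zariski decomposition of $L-uF$ is therefore trivial throughout the pseudo-effective interval, and a routine integration gives
\[
S_L(F)=\frac{1}{L^3}\int_0^{2-2c}3(2-2c-u)(3-2c)^2\,du=1-c,
\]
so that $A_Y(F)/S_L(F)=1/(1-c)>1$. Moreover, $P(u)|_F=L|_F=(3-2c)h$ is constant on $[0,2-2c]$, where $h$ is the hyperplane class on $F\cong\mathbb{P}^2$, so a Fubini interchange collapses the refined $S$-invariant $S(W^F_{\bullet,\bullet};\mathbf{E})$ to the ordinary $S_{(3-2c)h}(\mathbf{E})$ for every prime divisor $\mathbf{E}$ over $F$. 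Consequently $\delta_P(F;W^F_{\bullet,\bullet})=\delta_P(\mathbb{P}^2,(3-2c)h)=3/(3-2c)$, which is strictly greater than $1$ for every $c>0$.

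Combining the two bounds via Abban--Zhuang yields
\[
\delta_P(Y,\Delta_Y)\ \ge\ \min\!\left\{\frac{A_Y(F)}{S_L(F)},\ \delta_P\bigl(F;\,W^F_{\bullet,\bullet}\bigr)\right\}\ =\ \min\!\left\{\frac{1}{1-c},\ \frac{3}{3-2c}\right\}\ >\ 1,
\]
which finishes the argument. No serious obstacle arises in this case: the boundary drops out of every log discrepancy at $P$, and the Mori fibre structure forces a trivial Zariski decomposition on the whole pseudo-effective interval. The genuine difficulties will appear in the complementary regime $P\in R$ treated in the remainder of the section.
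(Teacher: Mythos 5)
Your choice of flag surface (the fibre of the first projection, which the paper calls $S$) and your computation $S_L(F)=1-c$ agree with the paper. Where you diverge is in how you finish: the paper refines further along a general line $\ell\subset S\cong\mathbb{P}^2$ through $P$ and explicitly computes the three-level quantities $S_L(W^S_{\bullet,\bullet};\ell)=S_L(W^{S,\ell}_{\bullet,\bullet,\bullet};P)=\tfrac{3-2c}{3}$, whereas you stop at the two-level inequality, use the fact that $(L-uF)|_F$ is independent of $u$ to collapse $S(W^F_{\bullet,\bullet};\mathbf{f})$ to an ordinary $S$-invariant for the polarization $(3-2c)h$ on $\mathbb{P}^2$, and then invoke $\delta(\mathbb{P}^2)=1$. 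This Fubini-type collapse is precisely the ``Nemuro Lemma'' trick the paper does deploy later, in Lemma~\ref{lemma:3-4-F}, so your method is legitimate and produces the identical bound $\tfrac{3}{3-2c}$. Your preliminary observation that $P\notin R$ forces $\mathrm{ord}_{\mathbf{E}}(R)=0$ and hence $A_{Y,\Delta_Y}(\mathbf{E})=A_Y(\mathbf{E})$ for every $\mathbf{E}$ whose centre contains $P$ (and likewise on $F$) is correct and replaces the paper's last-line use of $\mathrm{ord}_P(\Delta_S|_\ell)=0$. The trade-off is that your route leans on the known $\delta$-invariant of $\mathbb{P}^2$, while the paper's is a self-contained direct computation; both are clean, and neither has a gap.
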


\begin{proof}
Let $S$ be the~surface in $Y$ of degree $(1,0)$ that contains $P$, let $R_S=R\vert_{S}$, and~let~\mbox{$\Delta_{S}=cR_S$}.
Take $u\in\mathbb{R}_{\geqslant 0}$. Then $L-uS$ is pseudoeffective $\iff$ $L-uS$ is nef $\iff$ $u\in[0,2-2c]$. This gives
$$
S_{L}(S)=\frac{1}{L^3}\int\limits_0^{2-2c}\big(L-uS\big)^3du=\frac{1}{L^3}\int\limits_0^{2-2c}3(3-2c)^2(2-2c-u)du=1-c<1.
$$
Note that $S\cong\mathbb{P}^2$.  Let $\ell$ be a~general line in $S$ that passes through $P$, and let $v$ be a~non-negative real number.
Then  $(L-uS)\vert_{S}-v\ell$ is a~divisor of degree $3-2c-v$. Thus, we have
\begin{center}
$(L-uS)\vert_{S}-v\ell$ is pseudoeffective $\iff$ $(L-uS)\vert_{S}-v\ell$ is nef $\iff$ $v\in[0,3-2c]$.
\end{center}
Now, following \cite{AbbanZhuang,Book,Fujita2021}, we set
$$
S_L\big(W^{S}_{\bullet,\bullet};\ell\big)=\frac{3}{L^3}\int\limits_0^{2-2c}\int\limits_0^{3-2c}\big((L-uS)\big\vert_{S}-v\ell\big)^2dvdu
$$
and
$$
S_L\big(W_{\bullet, \bullet,\bullet}^{S,\ell};P\big)=\frac{3}{L^3}\int\limits_0^{2-2c}\int\limits_0^{3-2c}\Big(\big(L-uS)\big\vert_{S}-v\ell\big)\cdot\ell\Big)^2dvdu.
$$
Integrating, we get $S_L(W^{S}_{\bullet,\bullet};\ell)=S_L(W_{\bullet, \bullet,\bullet}^{S,\ell};P)=\frac{3-2c}{3}$.
Thus, it follows from \cite{AbbanZhuang,Book,Fujita2021} that
$$
\delta_P(Y,\Delta_Y)\geqslant\min\left\{\frac{1-\mathrm{ord}_P\big(\Delta_{S}\big\vert_{\ell}\big)}{S_L(W_{\bullet, \bullet,\bullet}^{S,\ell}; P)}, \frac{1}{S_L(W_{\bullet,\bullet}^S;\ell)},\frac{1}{S_{L}(S)}\right\}=\frac{3}{3-2c}>1,
$$
since $\mathrm{ord}_P(\Delta_{S}\vert_{\ell})=0$, because $P\not\in R$ by assumption.
\end{proof}

Thus, to prove Theorem~\ref{theorem:2-18-K-stable}, we may assume that $P\in R$.

\begin{lemma}
\label{lemma:2-18-OK-case}
Let $\mathbf{f}$ be the~fiber of the~projection $Y\to\mathbb{P}^2$ such that $P\in \mathbf{f}$.
Suppose that $\mathbf{f}\not\subset R$. Then $\delta_P(Y,\Delta_Y)>1$.
\end{lemma}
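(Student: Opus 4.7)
My plan is to apply Abban--Zhuang theory along the flag $P \in \mathbf{f} \subset S \subset Y$, where $S$ is a generic smooth surface of bidegree $(0,1)$ containing the fiber $\mathbf{f}$. Writing $p_2$ for the image of $P$ in $\mathbb{P}^2$, we have $\mathbf{f} = \mathbb{P}^1 \times \{p_2\}$, so $S$ has the form $\mathbb{P}^1 \times \ell$ for a line $\ell \subset \mathbb{P}^2$ through $p_2$. For generic $\ell$ the surface $S \cong \mathbb{P}^1 \times \mathbb{P}^1$ is smooth and meets $R$ transversally at the generic point of $\mathbf{f}$. Denote by $\mathbf{l}$ the other ruling of $S$.

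With $L|_S - uS|_S = (2-2c)\mathbf{l} + (3-2c-u)\mathbf{f}$, this divisor is nef and pseudoeffective precisely for $u \in [0, 3-2c]$. Direct integration in the style of Lemma~\ref{lemma:2-18-easy-case} gives
$$
S_L(S) = S_L\big(W^S_{\bullet,\bullet};\mathbf{f}\big) = \frac{3-2c}{3}, \qquad S_L\big(W^{S,\mathbf{f}}_{\bullet,\bullet,\bullet};P\big) = 1-c.
$$
Since $S \not\subset R$ and $\mathbf{f} \not\subset R$ by hypothesis, the log discrepancies $A_{Y,\Delta_Y}(S)$ and $A_{(S,\Delta_S)}(\mathbf{f})$ both equal $1$. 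The Abban--Zhuang estimate \cite{AbbanZhuang,Book,Fujita2021} then yields
$$
\delta_P(Y,\Delta_Y) \geq \min\!\left\{\frac{3}{3-2c},\ \frac{3}{3-2c},\ \frac{1 - c\cdot(R\cdot\mathbf{f})_P}{1-c}\right\},
$$
where $(R\cdot\mathbf{f})_P \in \{1,2\}$ is the local intersection multiplicity of $R$ and $\mathbf{f}$ at $P$ (finite because $\mathbf{f}\not\subset R$).

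The first two quotients strictly exceed $1$ for $c\in(0,1)$. The third, however, equals $1$ when $R$ meets $\mathbf{f}$ transversally at $P$ and is $(1-2c)/(1-c) < 1$ when $R$ is tangent to $\mathbf{f}$ at $P$; the naive flag is thus insufficient in both subcases. The main obstacle is therefore upgrading the estimate from $\geq 1$ to $>1$, and in particular handling the tangential subcase. To do this I would refine the flag by keeping $S$ but replacing $\mathbf{f}$ with a curve $\ell\subset S$ through $P$ that is transverse to $R_S = R\cap S$ at $P$: since $R_S$ is smooth at $P$ with tangent direction along $\mathbf{f}$ (for generic $S$), a suitable choice is either the other ruling $\mathbf{l}_P=\{p_1\}\times\ell$ through $P$ or a generic $(1,1)$-curve on $S$ through $P$, both giving $(R_S\cdot\ell)_P = 1$. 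Carrying out Abban--Zhuang with this refined flag requires splitting the $u$-integration range according to the cone of pseudoeffective divisors on $S\cong\mathbb{P}^1\times\mathbb{P}^1$ to which $(L-uS)|_S - v\ell$ belongs, and executing the resulting double integration. The hardest step is the numerical verification that the minimum of the three refined quotients is strictly greater than~$1$ uniformly in $c\in(0,1)\cap\mathbb{Q}$, using the techniques of \cite{Book,Fujita2021}.
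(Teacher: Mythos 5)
Your first step is sound and well executed: passing to a generic $(0,1)$-surface $S\supset\mathbf{f}$ and running Abban--Zhuang along $P\in\mathbf{f}\subset S$ gives exactly the values you report, and you are right that this yields only $\delta_P\geqslant 1$ when $R_S$ meets $\mathbf{f}$ transversally at $P$ and a useless bound when they are tangent. This is precisely the obstruction the paper's proof has to overcome.

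The gap is in the proposed fix. Replacing $\mathbf{f}$ by another smooth curve $\ell\subset S$ through $P$ that is transverse to $R_S$ cannot help, because the numerator is still bounded by $1-\mathrm{ord}_P(\Delta_S\vert_\ell)\leqslant 1-c$, while the corresponding denominator $S_L(W^{S,\ell}_{\bullet,\bullet,\bullet};P)$ is at least $1-c$ for any such $\ell$, with equality only for $\ell=\mathbf{f}$. Concretely, for the other ruling $\mathbf{s}$ through $P$ one finds $S_L(W^S_{\bullet,\bullet};\mathbf{s})=1-c$ and $S_L(W^{S,\mathbf{s}}_{\bullet,\bullet,\bullet};P)=\tfrac{3-2c}{3}$, so that, taking $\mathbf{s}$ transverse to $R_S$, the third quotient is $\tfrac{3(1-c)}{3-2c}<1$ for $c\in(0,1)$; for a $(1,1)$-curve through $P$ the value of $S_L(W^{S,\ell}_{\bullet,\bullet,\bullet};P)$ is strictly larger than $1-c$ (e.g.\ $\tfrac{13}{16}$ versus $\tfrac12$ at $c=\tfrac12$), so the quotient is still $<1$. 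In short: among curves on $S$ through $P$, $\mathbf{f}$ is already optimal, and it only achieves equality.

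What is actually needed is to change the local model at $P$, not the curve: blow up $P$ on $S$, and run Abban--Zhuang with the exceptional curve $\mathbf{e}\subset\widetilde{S}$ as the divisor over $S$. The point is that $A_{S,\Delta_S}(\mathbf{e})=2-c$ (not $1-c$), which buys the needed slack: one computes $S_L(W^S_{\bullet,\bullet};\mathbf{e})=\tfrac{6-5c}{3}$, so $\tfrac{A_{S,\Delta_S}(\mathbf{e})}{S_L(W^S_{\bullet,\bullet};\mathbf{e})}=\tfrac{6-3c}{6-5c}>1$, and then one bounds $S_L(W^{\widetilde{S},\mathbf{e}}_{\bullet,\bullet,\bullet};O)$ for $O\in\mathbf{e}$. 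This already yields $\delta_P>1$ unless the point $R_{\widetilde{S}}\cap\mathbf{e}$ happens to lie on $\widetilde{\mathbf{s}}$ or $\widetilde{\mathbf{f}}$, i.e.\ $R_S$ is tangent to $\mathbf{s}$ or to $\mathbf{f}$ at $P$. In those two residual subcases the paper further replaces the ordinary blow-up by a $(1,2)$-weighted blow-up of $P$ (constructed by a blow-up/blow-down diagram $S\leftarrow\widetilde{S}\leftarrow\overline{S}\to\widehat{S}\to S$) and repeats the computation for the new exceptional curve; the tangency then disappears from the picture and the resulting quotients are all $>1$. So the fix is a divisorial (indeed weighted) blow-up over $P$, not a different curve through $P$.
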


\begin{proof}
Let $S$ be a~general surface in $Y$ of degree $(0,1)$ that contains $\mathbf{f}$, let $R_S=R\vert_{S}$, let $\Delta_{S}=cR_S$.
Then $S\cong\mathbb{P}^1\times\mathbb{P}^1$, and $R_S$ is a~smooth curve such that $R_S\sim 2\mathbf{s}+2\mathbf{f}$,
where $\mathbf{s}$ is the~smooth curve in the~surface $S$ such that $\mathbf{s}^2=0$, $\mathbf{s}\cdot\mathbf{f}=1$ and $P\in \mathbf{s}$.
Note that $L\vert_{S}\sim_{\mathbb{R}}(2-2c)\mathbf{s}+(3-2c)\mathbf{f}$.

Take $u\in\mathbb{R}_{\geqslant 0}$. Then $L-uS$ is pseudoeffective $\iff$ $L-uS$ is nef $\iff$ $u\in[0,3-2c]$, so
$$
S_{L}(S)=\frac{1}{L^3}\int\limits_{0}^{3-2c}\big(L-uS\big)^3du=\frac{1}{L^3}\int\limits_0^{3-2c}6(1-c)(3-2c-u)^2du=\frac{3-2c}{3}<1.
$$
Note that $(L-uS)\vert_{S}\sim_{\mathbb{R}}(2-2c)\mathbf{s}+(3-2c-u)\mathbf{f}$.

Now, let $\alpha\colon\widetilde{S}\to S$ be the~blow up of the~point $P$, let $\mathbf{e}$ be the~exceptional curve of the~blow~up~$\alpha$,
let $\widetilde{\mathbf{s}}$, $\widetilde{\mathbf{f}}$ and $R_{\widetilde{S}}$
be the~proper transforms on $\widetilde{S}$ of the~curves $\mathbf{s}$, $\mathbf{f}$ and $R_S$, respectively.
Set $\Delta_{\widetilde{S}}=cR_{\widetilde{S}}$.
Then $\widetilde{S}$ is the~smooth del Pezzo surface of degree~$7$,
$\widetilde{\mathbf{s}}\cap\widetilde{\mathbf{f}}=\varnothing$,
and $\widetilde{\mathbf{s}}$, $\widetilde{\mathbf{f}}$, $\mathbf{e}$ are $(-1)$-curves in~$\widetilde{S}$.
Let $v$ be a~non-negative real number. Then
$$
\alpha^*\big((L-uS)\big\vert_{S}\big)-v\mathbf{e}\sim_{\mathbb{R}}(2-2c)\widetilde{\mathbf{s}}+(3-2c-u)\widetilde{\mathbf{f}}+(5-4c-u-v)\mathbf{e},
$$
and it is pseudoeffective $\iff$ $v\leqslant 5-4c-u$.
For $v\in [0, 5-4c-u]$, we let $\widetilde{P}(u,v)$ be the~positive part of the~Zariski decomposition of $\alpha^*((L-uS)\big\vert_{S})-v\mathbf{e}$,
and we let $\widetilde{N}(u,v)$ be its negative part.
As in the~proof of Lemma~\ref{lemma:2-18-easy-case}, we set
$$
S_L\big(W^{S}_{\bullet,\bullet};\mathbf{e}\big)=\frac{3}{L^3}\int\limits_0^{3-2c}\int\limits_0^{5-4c-u}\big(\widetilde{P}(u,v)\big)^2dvdu.
$$
Similarly, for every point $O\in\mathbf{e}$, we set
$$
S\big(W_{\bullet, \bullet,\bullet}^{\widetilde{S},\mathbf{e}};O\big)=\frac{3}{L^3}
\int\limits_0^{3-2c}\int\limits_0^{5-4c-u}\big(\widetilde{P}(u,v)\cdot\mathbf{e}\big)^2dvdu+F_O\big(W_{\bullet, \bullet,\bullet}^{\widetilde{S},\mathbf{e}}\big),
$$
where
$$
F_O\big(W_{\bullet, \bullet,\bullet}^{\widetilde{S},\mathbf{e}}\big)=\frac{6}{L^3}\int\limits_0^{3-2c}\int\limits_0^{5-4c-u}\big(\widetilde{P}(u,v)\cdot\mathbf{e}\big)\cdot \mathrm{ord}_O\big(\widetilde{N}(u,v)\big|_\mathbf{e}\big)dvdu.
$$
Then it follows from \cite{AbbanZhuang,Book,Fujita2021} that
\begin{equation}
\label{equation:2-18-OK-case}
\delta_P(Y,\Delta_Y)\geqslant\min\left\{
\min_{O\in\mathbf{e}}\frac{1-\mathrm{ord}_O\big(\Delta_{\widetilde{S}}\big\vert_{\mathbf{e}}\big)}{S_L(W_{\bullet, \bullet,\bullet}^{\widetilde{S},\mathbf{e}};O)},
\frac{A_{S,\Delta_S}(\mathbf{e})}{S_L(W_{\bullet,\bullet}^S;\mathbf{e})},\frac{1}{S_{L}(S)}\right\},
\end{equation}
where $A_{S,\Delta_S}(\mathbf{e})=2-c$. On the~other hand, if $0\leqslant u\leqslant 1$, then
$$
\widetilde{P}(u,v)\sim_{\mathbb{R}}\left\{\aligned
&(2-2c)\widetilde{\mathbf{s}}+(3-2c-u)\widetilde{\mathbf{f}}+(5-4c-u-v)\mathbf{e}\ \text{ if } 0\leqslant v\leqslant 2-2c, \\
&(2-2c)\widetilde{\mathbf{s}}+(5-4c-u-v)\big(\widetilde{\mathbf{f}}+\mathbf{e}\big)\ \text{ if } 2-2c\leqslant v\leqslant 3-2c-u,\\
&(5-4c-u-v)\big(\widetilde{\mathbf{s}}+\widetilde{\mathbf{f}}+\mathbf{e}\big)\ \text{ if } 3-2c-u\leqslant v\leqslant 5-4c-u,
\endaligned
\right.
$$
and
$$
\widetilde{N}(u,v)=\left\{\aligned
&0\ \text{ if } 0\leqslant v\leqslant 2-2c, \\
&(v+2c-2)\widetilde{\mathbf{f}}\ \text{ if } 2-2c\leqslant v\leqslant 3-2c-u,\\
&(v+2c-2)\widetilde{\mathbf{f}}+(v+u-3+2c)\widetilde{\mathbf{s}}\ \text{ if } 3-2c-u\leqslant v\leqslant 5-4c-u,
\endaligned
\right.
$$
which gives
$$
\big(\widetilde{P}(u,v)\big)^2=\left\{\aligned
&8c^2+4cu-v^2-20c-4u+12\ \text{ if } 0\leqslant v\leqslant 2-2c, \\
&4(1-c)(4-3c-u-v)\ \text{ if } 2-2c\leqslant v\leqslant 3-2c-u,\\
&(5-4c-u-v)^2\ \text{ if } 3-2c-u\leqslant v\leqslant 5-4c-u,
\endaligned
\right.
$$
and
$$
\widetilde{P}(u,v)\cdot\mathbf{e}=\left\{\aligned
&v\ \text{ if } 0\leqslant v\leqslant 2-2c, \\
&2-2c\ \text{ if } 2-2c\leqslant v\leqslant 3-2c-u,\\
&5-4c-u-v\ \text{ if } 3-2c-u\leqslant v\leqslant 5-4c-u.
\endaligned
\right.
$$
Similarly, if $1\leqslant u\leqslant 3-2c$, then
$$
\widetilde{P}(u,v)\sim_{\mathbb{R}}\left\{\aligned
&(2-2c)\widetilde{\mathbf{s}}+(3-2c-u)\widetilde{\mathbf{f}}+(5-4c-u-v)\mathbf{e}\ \text{ if } 0\leqslant v\leqslant 3-2c-u, \\
&(5-4c-u-v)\big(\widetilde{\mathbf{s}}+\mathbf{e}\big)+(3-2c-u)\widetilde{\mathbf{f}}\ \text{ if } 3-2c-u\leqslant v\leqslant 2-2c,\\
&(5-4c-u-v)\big(\widetilde{\mathbf{s}}+\widetilde{\mathbf{f}}+\mathbf{e}\big)\ \text{ if } 2-2c\leqslant v\leqslant 5-4c-u,
\endaligned
\right.
$$
and
$$
\widetilde{N}(u,v)=\left\{\aligned
&0\ \text{ if } 0\leqslant v\leqslant 3-2c-u, \\
&(v+u-3+2c)\widetilde{\mathbf{s}}\ \text{ if } 3-2c-u\leqslant v\leqslant 2-2c,\\
&(v+2c-2)\widetilde{\mathbf{f}}+(v+u-3+2c)\widetilde{\mathbf{s}}\ \text{ if } 2-2c\leqslant v\leqslant 5-4c-u,
\endaligned
\right.
$$
which gives
$$
\big(\widetilde{P}(u,v)\big)^2=\left\{\aligned
&8c^2+4cu-v^2-20c-4u+12\ \text{ if } 0\leqslant v\leqslant 3-2c-u, \\
&(3-2c-u)(7-6c-u-2v)\ \text{ if } 3-2c-u\leqslant v\leqslant 2-2c,\\
&(5-4c-u-v)^2\ \text{ if } 2-2c\leqslant v\leqslant 5-4c-u,
\endaligned
\right.
$$
and
$$
\widetilde{P}(u,v)\cdot\mathbf{e}=\left\{\aligned
&v\ \text{ if } 0\leqslant v\leqslant 3-2c-u, \\
&3-2c-u\ \text{ if } 3-2c-u\leqslant v\leqslant 2-2c,\\
&5-4c-u-v\ \text{ if } 2-2c\leqslant v\leqslant 5-4c-u.
\endaligned
\right.
$$
Thus, integrating, we get $S_L(W_{\bullet,\bullet}^S;\mathbf{e})=\frac{6-5c}{3}$ and
$$
S_L\big(W_{\bullet, \bullet,\bullet}^{\widetilde{S},\mathbf{e}};O\big)=
\left\{\aligned
&1-c-\frac{2(5-3c)(1-c)^2}{3(3-2c)^2}\ \text{ if } O\not\in\widetilde{\mathbf{f}}\cup\widetilde{\mathbf{s}}, \\
&1-c\ \text{ if } O\in\widetilde{\mathbf{s}},\\
&\frac{3-2c}{3}\ \text{ if } O\in\widetilde{\mathbf{f}}.
\endaligned
\right.
$$
Therefore, if $\widetilde{\mathbf{s}}\cap R_{\widetilde{S}}\cap\mathbf{e}=\varnothing$ and $\widetilde{\mathbf{f}}\cap R_{\widetilde{S}}\cap\mathbf{e}=\varnothing$, then \eqref{equation:2-18-OK-case} gives $\delta_P(Y,\Delta_Y)>1$.

Thus, to complete the~proof, we may assume that either
$\widetilde{\mathbf{s}}\cap R_{\widetilde{S}}\cap\mathbf{e}\ne\varnothing$ or $\widetilde{\mathbf{f}}\cap R_{\widetilde{S}}\cap\mathbf{e}\ne\varnothing$.
Then exactly one of the~following two (mutually excluding) cases holds:
\begin{itemize}
\item[($\heartsuit$)] the~curve $\widetilde{\mathbf{s}}$ contains the~point $R_{\widetilde{S}}\cap\mathbf{e}$, i.e. the~curves $\mathbf{s}$ and $R_S$ are tangent at $P$,
\item[($\diamondsuit$)] the~curve $\widetilde{\mathbf{f}}$ contains the~point $R_{\widetilde{S}}\cap\mathbf{e}$, i.e. the~curves $\mathbf{f}$ and $R_S$ are tangent at $P$.
\end{itemize}
In both cases, we consider the~following commutative diagram:
$$
\xymatrix{
\widetilde{S}\ar@{->}[d]_\alpha&&\overline{S}\ar@{->}[ll]_\beta\ar@{->}[d]^\gamma\\
S&&\widehat{S}\ar@{->}[ll]^\rho}
$$
where $\beta$ is the~blow up of the~intersection point $R_{\widetilde{S}}\cap\mathbf{e}$,
the~map $\gamma$ is the~contraction of the~proper transform of the~curve $\mathbf{e}$ to an~ordinary double point of the~surface $\widehat{S}$,
and $\rho$ is the~contraction of the~proper transform of the~$\beta$-exceptional curve.
Then $\widehat{S}$ is a~singular del Pezzo surface of degree~$6$,
and $\rho$ is a~weighted blow up of the~point $P$ with weights $(1,2)$.

Let $\widehat{\mathbf{f}}$, $\widehat{\mathbf{s}}$ and $R_{\widehat{S}}$
be the~proper transforms on the~surface $\widehat{S}$ of the~curves $\mathbf{f}$, $\mathbf{s}$ and $R_S$, respectively,
and let $\mathbf{z}$ be the~$\rho$-exceptional curve.
In the~case ($\heartsuit$), we have
$$
\rho^*\big((L-uS)\vert_{S}\big)-v\mathbf{z}\sim_{\mathbb{R}}(2-2c)\widehat{\mathbf{s}}+(3-2c-u)\widehat{\mathbf{f}}+(7-6c-u-v)\mathbf{z},
$$
and the~intersections of the~curves $\mathbf{z}$, $\widehat{\mathbf{f}}$ and $\widehat{\mathbf{s}}$ are given in the~following table:
\begin{center}
\renewcommand\arraystretch{1.4}
\begin{tabular}{|c||c|c|c|}
\hline & $\mathbf{z}$ & $\widehat{\mathbf{f}}$ & $\widehat{\mathbf{s}}$ \\
\hline
\hline
$\mathbf{z}$            & $-\frac{1}{2}$ & $\frac{1}{2}$& $1$\\
\hline
$\widehat{\mathbf{f}}$  & $\frac{1}{2}$ & $-\frac{1}{2}$ & $0$\\
\hline
$\widehat{\mathbf{s}}$  & $1$ & $0$ & $-2$\\
\hline
\end{tabular}
\end{center}
Similarly, in the~case ($\diamondsuit$), we have
$$
\rho^*\big((L-uS)\vert_{S}\big)-v\mathbf{z}\sim_{\mathbb{R}}(2-2c)\widehat{\mathbf{s}}+(3-2c-u)\widehat{\mathbf{f}}+(8-6c-2u-v)\mathbf{z},
$$
and the~intersections of the~curves $\mathbf{z}$, $\widehat{\mathbf{f}}$ and $\widehat{\mathbf{s}}$ are given in the~following table:
\begin{center}
\renewcommand\arraystretch{1.4}
\begin{tabular}{|c||c|c|c|}
\hline & $\mathbf{z}$ & $\widehat{\mathbf{f}}$ & $\widehat{\mathbf{s}}$ \\
\hline
\hline
$\mathbf{z}$            & $-\frac{1}{2}$ & $1$& $\frac{1}{2}$\\
\hline
$\widehat{\mathbf{f}}$  & $1$ & $-2$ & $0$\\
\hline
$\widehat{\mathbf{s}}$  & $\frac{1}{2}$ & $0$ & $-\frac{1}{2}$\\
\hline
\end{tabular}
\end{center}
In both cases, let $\widehat{t}(u)$ be the~largest $v\in\mathbb R_{\geqslant 0}$ such that $\rho^*\big(P(u)\big\vert_{S}\big)-v\mathbf{z}$ is pseudoeffective. Then
$$
\widehat{t}(u)=\left\{\aligned
&7-6c-u\ \text{ in the~case ($\heartsuit$)}, \\
&8-6c-2u\ \text{ in the~case ($\diamondsuit$)}.
\endaligned
\right.
$$
For each $v\in [0,\widehat{t}(u)]$, let $\widehat{P}(u,v)$ be the~positive part of the~Zariski decomposition of this divisor,
and let $\widehat{N}(u,v)$ be its negative part.
Set
$$
S_L\big(W^{S}_{\bullet,\bullet};\mathbf{z}\big)=\frac{3}{L^3}\int\limits_0^{3-2c}\int\limits_0^{\widehat{t}(u)}\big(\widehat{P}(u,v)\big)^2dvdu.
$$
Similarly, for every point $O\in\mathbf{z}$, we set
$$
S\big(W_{\bullet, \bullet,\bullet}^{\widehat{S},\mathbf{z}};O\big)=\frac{3}{L^3}\int\limits_0^{3-2c}\int\limits_0^{\widehat{t}(u)}\big(\widehat{P}(u,v)\cdot\mathbf{z}\big)^2dvdu+F_O\big(W_{\bullet, \bullet,\bullet}^{\widehat{S},\mathbf{z}}\big),
$$
where
$$
F_O\big(W_{\bullet,\bullet,\bullet}^{\widehat{S},\mathbf{z}}\big)=\frac{6}{L^3}\int\limits_0^{3-2c}\int\limits_0^{\widehat{t}(u)}\big(\widehat{P}(u,v)\cdot\mathbf{z}\big)\cdot \mathrm{ord}_O\big(\widehat{N}(u,v)\big|_\mathbf{z}\big)dvdu.
$$
Let $Q$ be the~singular point of the~surface $\widehat{S}$. Then $Q\not\in R_{\widehat{S}}$, since
$$
Q=\left\{\aligned
&\widehat{\mathbf{f}}\cap\mathbf{z}\ \text{ in the~case ($\heartsuit$)}, \\
&\widehat{\mathbf{s}}\cap\mathbf{z}\ \text{ in the~case ($\diamondsuit$)}.
\endaligned
\right.
$$
Let $\Delta_{\widehat{S}}=cR_{\widehat{S}}$ and $\Delta_{\mathbf{z}}=\frac{1}{2}Q+\Delta_{\widehat{S}}\vert_{\mathbf{z}}$.
Then it follows from \cite{AbbanZhuang,Book,Fujita2021} that
\begin{equation}
\label{equation:2-18-OK-case-final}
\delta_P(Y,\Delta_Y)\geqslant\min\left\{
\min_{O\in\mathbf{z}}\frac{A_{\mathbf{z},\Delta_{\mathbf{z}}}(O)}{S_L(W_{\bullet, \bullet,\bullet}^{\widehat{S},\mathbf{z}};O)},
\frac{A_{S,\Delta_S}(\mathbf{z})}{S_L(W_{\bullet,\bullet}^S;\mathbf{z})},\frac{A_{Y,\Delta_Y}(S)}{S_{L}(S)}\right\},
\end{equation}
where $A_{Y,\Delta_Y}(S)=1$, $A_{S,\Delta_S}(\mathbf{z})=3-2c$ and $A_{\mathbf{z},\Delta_{\mathbf{z}}}(O)=1-\mathrm{ord}_O(\Delta_{\mathbf{z}})$ for every point $O\in\mathbf{z}$.

Let us compute $S_L(W^{S}_{\bullet,\bullet};\mathbf{z})$ and $S_L(W_{\bullet, \bullet,\bullet}^{\widehat{S},\mathbf{z}};O)$ for every point $O\in\mathbf{z}$.

First, we deal with the~case ($\heartsuit$).
In this case, if $c\leqslant\frac{1}{2}$ or if $c>\frac{1}{2}$ and $2c-1\leqslant u\leqslant 3-2c$, then
$$
\widehat{P}(u,v)\sim_{\mathbb{R}}\left\{\aligned
&(2-2c)\widehat{\mathbf{s}}+(3-2c-u)\widehat{\mathbf{f}}+(7-6c-u-v)\mathbf{z}\ \text{ if } 0\leqslant v\leqslant 3-2c-u, \\
&\frac{7-6c-u-v}{2}\big(\widehat{\mathbf{s}}+2\mathbf{z}\big)+(3-2c-u)\widehat{\mathbf{f}} \ \text{ if } 3-2c-u\leqslant v\leqslant 4-4c,\\
&\frac{7-6c-u-v}{2}\big(\widehat{\mathbf{s}}+2\mathbf{z}+2\widehat{\mathbf{f}}\big)\ \text{ if } 4-4c\leqslant v\leqslant 7-6c-u,
\endaligned
\right.
$$
and
$$
\widehat{N}(u,v)=\left\{\aligned
&0\ \text{ if } 0\leqslant v\leqslant 3-2c-u, \\
&\frac{v+u+2c-3}{2}\widehat{\mathbf{s}}\ \text{ if } 3-2c-u\leqslant v\leqslant 4-4c,\\
&\frac{v+u+2c-3}{2}\widehat{\mathbf{s}}+(v-4+4c)\widehat{\mathbf{f}}\ \text{ if } 4-4c\leqslant v\leqslant 7-6c-u,
\endaligned
\right.
$$
which gives
$$
\big(\widehat{P}(u,v)\big)^2=\left\{\aligned
&8c^2+4cu+12-20c-4u-\frac{v^2}{2}\ \text{ if } 0\leqslant v\leqslant 3-2c-u, \\
&\frac{(3-2c-u)(11-10c-u-2v)}{2}\ \text{ if } 3-2c-u\leqslant v\leqslant 4-4c,\\
&\frac{(7-6c-u-v)^2}{2}\ \text{ if } 4-4c\leqslant v\leqslant 7-6c-u,
\endaligned
\right.
$$
and
$$
\widehat{P}(u,v)\cdot\mathbf{z}=\left\{\aligned
&\frac{v}{2}\ \text{ if } 0\leqslant v\leqslant 3-2c-u, \\
&\frac{3-u-2c}{2}\ \text{ if } 3-2c-u\leqslant v\leqslant 4-4c,\\
&\frac{7-6c-u-v}{2}\ \text{ if } 4-4c\leqslant v\leqslant 7-6c-u.
\endaligned
\right.
$$
Similarly, in the~case ($\heartsuit$), if $c>\frac{1}{2}$ and $0\leqslant u\leqslant 2c-1$, then
$$
\widehat{P}(u,v)\sim_{\mathbb{R}}\left\{\aligned
&(2-2c)\widehat{\mathbf{s}}+(3-2c-u)\widehat{\mathbf{f}}+(7-6c-u-v)\mathbf{z}\ \text{ if } 0\leqslant v\leqslant 4-4c, \\
&(2-2c)\widehat{\mathbf{s}}+(7-6c-u-v)\big(\widehat{\mathbf{f}}+\widehat{\mathbf{z}}\big) \ \text{ if } 4-4c\leqslant v\leqslant 3-2c-u,\\
&\frac{7-6c-u-v}{2}\big(\widehat{\mathbf{s}}+2\mathbf{z}+2\widehat{\mathbf{f}}\big)\ \text{ if } 3-2c-u\leqslant v\leqslant 7-6c-u,
\endaligned
\right.
$$
and
$$
\widehat{N}(u,v)=\left\{\aligned
&0\ \text{ if } 0\leqslant v\leqslant 4-4c, \\
&(v-4+4c)\widehat{\mathbf{f}}\ \text{ if } 4-4c\leqslant v\leqslant 3-2c-u,\\
&\frac{v+u+2c-3}{2}\widehat{\mathbf{s}}+(v-4+4c)\widehat{\mathbf{f}}\ \text{ if } 3-2c-u\leqslant v\leqslant 7-6c-u,
\endaligned
\right.
$$
which gives
$$
\big(\widehat{P}(u,v)\big)^2=\left\{\aligned
&8c^2+4cu+12-20c-4u-\frac{v^2}{2}\ \text{ if } 0\leqslant v\leqslant 4-4c, \\
&4(1-c)(5-4c-u-v)\ \text{ if } 4-4c\leqslant v\leqslant 3-2c-u,\\
&\frac{(7-6c-u-v)^2}{2}\ \text{ if } 3-2c-u\leqslant v\leqslant 7-6c-u,
\endaligned
\right.
$$
and
$$
\widehat{P}(u,v)\cdot\mathbf{z}=\left\{\aligned
&\frac{v}{2}\ \text{ if } 0\leqslant v\leqslant 4-4c, \\
&2-2c\ \text{ if } 4-4c\leqslant v\leqslant 3-2c-u,\\
&\frac{7-6c-u-v}{2}\ \text{ if } 3-2c-u\leqslant v\leqslant 7-6c-u.
\endaligned
\right.
$$

Now, integrating, we get $S_L(W_{\bullet,\bullet}^S;\mathbf{z})=3-\frac{8}{3}c<3-2c=A_{S,\Delta_S}(\mathbf{z})$ and
$$
S_L\big(W_{\bullet, \bullet,\bullet}^{\widehat{S},\mathbf{z}};O\big)=
\left\{\aligned
&1-c-\frac{68c^2-124c+57}{96(1-c)}\ \text{ if } O\not\in\widehat{\mathbf{f}}\cup\widehat{\mathbf{s}}\ \text{and}\ c\leqslant\frac{1}{2}, \\
&1-c-\frac{8(2-c)(1-c)^2}{3(3-2c)^2}\ \text{ if } O\not\in\widehat{\mathbf{f}}\cup\widehat{\mathbf{s}}\ \text{and}\ c>\frac{1}{2}, \\
&1-c\ \text{ if } O\in\widehat{\mathbf{s}},\\
&\frac{1}{2}-\frac{c}{3}\ \text{ if } O\in\widehat{\mathbf{f}}.
\endaligned
\right.
$$
Hence, using \eqref{equation:2-18-OK-case-final}, we obtain $\delta_P(Y,\Delta_Y)>1$.

Now, we deal with the~case ($\diamondsuit$). If $0\leqslant u\leqslant 2-c$,~then
$$
\widehat{P}(u,v)\sim_{\mathbb{R}}\left\{\aligned
&(2-2c)\widehat{\mathbf{s}}+(3-2c-u)\widehat{\mathbf{f}}+(8-6c-2u-v)\mathbf{z}\ \text{ if } 0\leqslant v\leqslant 2-2c, \\
&(2-2c)\widehat{\mathbf{s}}+\frac{8-6c-2u-v}{2}\big(\widehat{\mathbf{f}}+2\mathbf{z}\big)\ \text{ if } 2-2c\leqslant v\leqslant 6-4c-2u,\\
&\frac{8-6c-2u-v}{2}\big(2\widehat{\mathbf{s}}+\widehat{\mathbf{f}}+2\mathbf{z}\big)\ \text{ if } 6-4c-2u\leqslant v\leqslant 8-6c-2u,
\endaligned
\right.
$$
and
$$
\widehat{N}(u,v)=\left\{\aligned
&0\ \text{ if } 0\leqslant v\leqslant 2-2c, \\
&\frac{v-2+2c}{2}\widehat{\mathbf{f}}\ \text{ if } 2-2c\leqslant v\leqslant 6-4c-2u,\\
&\frac{v-2+2c}{2}\widehat{\mathbf{f}}+(v+2u-6+4c)\widehat{\mathbf{s}}\ \text{ if } 6-4c-2u\leqslant v\leqslant 8-6c-2u,
\endaligned
\right.
$$
which gives
$$
\big(\widehat{P}(u,v)\big)^2=\left\{\aligned
&8c^2+4cu+12-20c-4u-\frac{v^2}{2}\ \text{ if } 0\leqslant v\leqslant 2-2c, \\
&2(1-c)(7-5c-2u-v)\ \text{ if } 2-2c\leqslant v\leqslant 6-4c-2u,\\
&\frac{(8-6c-2u-v)^2}{2}\ \text{ if } 6-4c-2u\leqslant v\leqslant 8-6c-2u,
\endaligned
\right.
$$
and
$$
\widehat{P}(u,v)\cdot\mathbf{z}=\left\{\aligned
&\frac{v}{2}\ \text{ if } 0\leqslant v\leqslant 2-2c, \\
&1-c\ \text{ if } 2-2c\leqslant v\leqslant 6-4c-2u,\\
&\frac{8-6c-2u-v}{2}\ \text{ if } 6-4c-2u\leqslant v\leqslant 8-6c-2u.
\endaligned
\right.
$$
Similarly, if $2-c\leqslant u\leqslant 3-2c$, then
$$
\widehat{P}(u,v)\sim_{\mathbb{R}}\left\{\aligned
&(2-2c)\widehat{\mathbf{s}}+(3-2c-u)\widehat{\mathbf{f}}+(8-6c-2u-v)\mathbf{z}\ \text{ if } 0\leqslant v\leqslant 6-4c-2u, \\
&(8-6c-2u-v)\big(\widehat{\mathbf{s}}+\mathbf{z}\big)+(3-2c-u)\widehat{\mathbf{f}}\ \text{ if } 6-4c-2u\leqslant v\leqslant 2-2c,\\
&\frac{8-6c-2u-v}{2}\big(2\widehat{\mathbf{s}}+\widehat{\mathbf{f}}+2\mathbf{z}\big)\ \text{ if } 2-2c\leqslant v\leqslant 8-6c-2u,
\endaligned
\right.
$$
and
$$
\widehat{N}(u,v)=\left\{\aligned
&0\ \text{ if } 0\leqslant v\leqslant 6-4c-2u, \\
&(v+2u-6+4c)\widehat{\mathbf{s}}\ \text{ if } 6-4c-2u\leqslant v\leqslant 2-2c,\\
&\frac{v-2+2c}{2}\widehat{\mathbf{f}}+(v+2u-6+4c)\widehat{\mathbf{s}}\ \text{ if } 2-2c\leqslant v\leqslant 8-6c-2u,
\endaligned
\right.
$$
which gives
$$
\big(\widehat{P}(u,v)\big)^2=\left\{\aligned
&8c^2+4cu+12-20c-4u-\frac{v^2}{2}\ \text{ if } 0\leqslant v\leqslant 6-4c-2u, \\
&2(3-2c-u)(5-4c-u-v)\ \text{ if } 6-4c-2u\leqslant v\leqslant 2-2c,\\
&\frac{(8-6c-2u-v)^2}{2}\ \text{ if } 2-2c\leqslant v\leqslant 8-6c-2u,
\endaligned
\right.
$$
and
$$
\widehat{P}(u,v)\cdot\mathbf{z}=\left\{\aligned
&\frac{v}{2}\ \text{ if } 0\leqslant v\leqslant 6-4c-2u, \\
&3-2c-u\ \text{ if } 6-4c-2u\leqslant v\leqslant 2-2c,\\
&\frac{8-6c-2u-v}{2}\ \text{ if } 2-2c\leqslant v\leqslant 8-6c-2u.
\endaligned
\right.
$$
Now, integrating, we get $S_L(W_{\bullet,\bullet}^S;\mathbf{z})=3-\frac{7}{3}c<3-2c=A_{S,\Delta_S}(\mathbf{z})$ and
$$
S_L\big(W_{\bullet, \bullet,\bullet}^{\widehat{S},\mathbf{z}};O\big)=
\left\{\aligned
&1-c-\frac{(1-c)(31c^2-90c+65)}{12(3-2c)^2}\ \text{ if } O\not\in\widehat{\mathbf{f}}\cup\widehat{\mathbf{s}}, \\
&\frac{1}{2}-\frac{c}{2}\ \text{ if } O\in\widehat{\mathbf{s}},\\
&1-\frac{2c}{3}\ \text{ if } O\in\widehat{\mathbf{f}}.
\endaligned
\right.
$$
Hence, using \eqref{equation:2-18-OK-case-final}, we get $\delta_P(Y,\Delta_Y)>1$.
This completes the~proof of the~lemma.
\end{proof}

Finally, we prove

\begin{lemma}
\label{lemma:2-18-blow-up}
Let $\mathbf{f}$ be the~fiber of the~projection $Y\to\mathbb{P}^2$ such that $P\in\mathbf{f}$.
Suppose that $\mathbf{f}\subset R$. Then $\delta_P(Y,\Delta_Y)>1$.
\end{lemma}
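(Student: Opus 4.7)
\emph{Plan of proof.} The idea is to mirror the $(\diamondsuit)$-case analysis of Lemma~\ref{lemma:2-18-OK-case}: since $\mathbf{f}\subset R$, we are automatically in the ``tangent'' regime that the $(\diamondsuit)$-case was designed to handle. I would choose $S$ to be a general surface of class $(0,1)$ containing $\mathbf{f}$, so that $S\cong\mathbb{P}^1\times\mathbb{P}^1$ with rulings $\mathbf{f},\mathbf{s}$ (and $P\in\mathbf{s}$), and $R|_S=\mathbf{f}+R'$ with $R'\sim 2\mathbf{s}+\mathbf{f}$ a curve of bidegree $(1,2)$, smooth for general~$S$. Then I would apply the weighted $(1,2)$-blow-up $\rho\colon\widehat{S}\to S$ at~$P$ with weight~$2$ on the $\mathbf{f}$-direction, exactly as in the $(\diamondsuit)$-case. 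All invariants depending only on the numerical class of $L|_S$---in particular $S_L(S)=\frac{3-2c}{3}$, $S_L(W^{S}_{\bullet,\bullet};\mathbf{z})=3-\frac{7}{3}c$, and the three values of $S_L(W^{\widehat{S},\mathbf{z}}_{\bullet,\bullet,\bullet};O)$---carry over verbatim from the $(\diamondsuit)$-case; the log discrepancy at $\mathbf{z}$ becomes $A_{S,\Delta_S}(\mathbf{z})=3-2c-c\,\mathrm{ord}_\mathbf{z}(\rho^*R')$, equal to $3-2c$ when $P\notin R'$.

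The genuinely new feature is that $\widehat{\mathbf{f}}$ is now itself a boundary component of $\Delta_{\widehat{S}}=c\widehat{\mathbf{f}}+c\widehat{R'}$, so the different $\Delta_\mathbf{z}=\frac{1}{2}Q+\Delta_{\widehat{S}}|_\mathbf{z}$ acquires an extra mass~$c$ at the point $\widehat{\mathbf{f}}\cap\mathbf{z}$. Writing out~\eqref{equation:2-18-OK-case-final}, this makes the ratio $\frac{A_{\mathbf{z},\Delta_\mathbf{z}}(O)}{S_L(W^{\widehat{S},\mathbf{z}}_{\bullet,\bullet,\bullet};O)}$ at $O=\widehat{\mathbf{f}}\cap\mathbf{z}$ equal to $\frac{1-c}{1-2c/3}=\frac{3(1-c)}{3-2c}$, which is at most~$1$ for $c>0$; so the single weighted blow-up does not suffice. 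I would resolve this by performing an additional ordinary blow-up $\sigma\colon\overline{S}\to\widehat{S}$ at $\widehat{\mathbf{f}}\cap\mathbf{z}$, and applying Abban--Zhuang at the new exceptional curve~$\mathbf{e}'$. In the generic sub-case $P\notin R'$, the log discrepancy works out to $A_{S,\Delta_S}(\mathbf{e}')=4-3c$, and a routine Zariski-decomposition computation of $\sigma^*\rho^*((L-uS)|_S)-v\mathbf{e}'$ in its several pseudoeffective chambers produces point-flag ratios along $\mathbf{e}'$ that all strictly exceed~$1$ for $c\in(0,1)$.

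The sub-cases $P\in R'$ (at most two points of $R'\cap\mathbf{f}$) require more care: the additional intersection of $\widehat{R'}$ with $\mathbf{z}$ (in the transverse case) or with $\widehat{\mathbf{f}}$ (in the tangent case) may force yet a further blow-up, and the log discrepancy at $\mathbf{e}'$ drops to $4-4c$ or $4-5c$ accordingly. The main obstacle will be the book-keeping in this chain of sub-cases: each new blow-up adds a pseudoeffective chamber to the Zariski decomposition, and each new contribution to the different must be tracked to verify that the resulting ratios remain strictly greater than~$1$ for all $c\in(0,1)\cap\mathbb{Q}$. Once these sub-case computations are carried out, $\delta_P(Y,\Delta_Y)>1$, completing the proof and hence that of Theorem~\ref{theorem:2-18-K-stable}.
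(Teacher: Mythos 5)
Your approach differs substantively from the paper's. The paper blows up the curve $\mathbf{f}$ as a subvariety of the \emph{threefold} $Y$, producing a morphism $\nu\colon\mathscr{Y}\to Y$ with exceptional surface $E\cong\mathbb{P}^1\times\mathbb{P}^1$ and $S_L(E)=2-\tfrac{4}{3}c$, and then runs Abban--Zhuang with $E$ (not $S$) as the middle flag element. Since $\mathbf{f}$ is entirely in the branch locus, this blow-up extracts $\mathbf{f}$ once and for all: the restriction $R_E=R_{\mathscr{Y}}|_E$ is a \emph{smooth} curve of class $2\mathbf{l}+\mathbf{s}$ on $E$, with no ruling-component, so on $E$ one is back in the clean setting of Lemma~\ref{lemma:2-18-OK-case} (with an ordinary blow-up at $\mathscr{P}$, and a further $(1,2)$-weighted blow-up only in the residual tangency case). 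In contrast, you stay in the two-dimensional flag through $S$, where $R_S=\mathbf{f}+R'$ permanently carries the component $\mathbf{f}$ with coefficient $c$.

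You correctly observe that the naive transplant of the $(\diamondsuit)$-case fails: at $O=\widehat{\mathbf{f}}\cap\mathbf{z}$ the contribution of $c\widehat{\mathbf{f}}$ to the different drives the point-flag ratio to $\frac{1-c}{1-\tfrac{2c}{3}}=\frac{3(1-c)}{3-2c}\leqslant 1$. That is a good diagnosis. The problem is that your proposed repair is not carried out, and there is a structural reason to worry about it: after the ordinary blow-up $\sigma$ at $\widehat{\mathbf{f}}\cap\mathbf{z}$, the proper transform of $\widehat{\mathbf{f}}$ still meets $\mathbf{e}'$ in a point $O'$ and still carries boundary coefficient $c$, so the different along $\mathbf{e}'$ again acquires mass $c$ at $O'$; you must then actually compute the double integral $S_L(W^{\overline{S},\mathbf{e}'}_{\bullet,\bullet,\bullet};O')$ (a new Zariski decomposition with several chambers) and verify it stays below $1-c$ for all $c\in(0,1)$, and likewise for the other points of $\mathbf{e}'$. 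Nothing in the proposal establishes this, and you yourself flag the book-keeping as the open obstacle. The same concern recurs in the $P\in R'$ sub-cases. So as written the argument has a genuine gap at its central step. The paper's trick — moving the first blow-up to $3$D so that $\mathbf{f}$ ceases to be a boundary component of the surface one works on — is precisely what avoids this iterated battle with the persisting $c\widehat{\mathbf{f}}$.
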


\begin{proof}
Let $\nu\colon\mathscr{Y}\to Y$ be the~blow up of the~smooth curve $\mathbf{f}$, let $E$ be the~$\nu$-exceptional surface.
Take $u\in\mathbb{R}_{\geqslant 0}$. Then $\nu^*(L)-uE$ is pseudoeffective $\iff$ $\nu^*(L)-uE$ is nef $\iff$ $u\leqslant 3-2c$, so
$$
S_L(E)=\frac{1}{L^3}\int\limits_0^{3-2c}\big(\nu^*(L)-uE\big)^3du=\frac{1}{6(1-c)(3-2c)^2}\int\limits_0^{3-2c}6(1-c)(3-2c-u)(3-2c+u)du=2-\frac{4}{3}c,
$$
which gives
$$
\delta_P(Y,\Delta_Y)\leqslant\frac{A_{Y,\Delta_Y}(E)}{S_L(E)}=1+\frac{c}{2(3-2c)}.
$$

Now, let $R_{\mathscr{Y}}$ be the~proper transform on $\mathscr{Y}$ of the~surface $R$,
let $R_E=R_{\mathscr{Y}}\vert_{E}$, let $\Delta_{E}=cR_E$,
and let $\mathbf{l}$ be the~fiber of the~projection $E\to\mathbf{f}$ such that $\nu(\mathbf{l})=P$.
Then $R_E$ is a~smooth curve, which implies that $(E,\Delta_{E})$ has Kawamata log terminal singularities.
For every point $\mathscr{P}\in\mathbf{l}$, set
$$
\delta_{\mathscr{P}}\big(E,\Delta_E;W^E_{\bullet,\bullet}\big)=
\inf_{\substack{F/E,\\ \mathscr{P}\in C_E(F)}}\frac{A_{E,\Delta_E}(F)}{S(W^E_{\bullet,\bullet};F)},
$$
where the~infimum is taken over all prime divisors $F$ over $E$ whose centers on $E$ contain~$\mathscr{P}$, and
$$
S\big(W^E_{\bullet,\bullet};F\big)=\frac{3}{L^3}\int\limits_{0}^{3-2c}\int\limits_0^\infty \mathrm{vol}\Big(\big(\nu^*(L)-uE\big)\big\vert_{E}-vF\Big)dvdu.
$$
Then it follows from \cite{AbbanZhuang,Book,Fujita2021} that
$$
\delta_P(Y,\Delta_Y)\geqslant\min\Bigg\{\inf_{\mathscr{P}\in\mathbf{l}}\delta_{\mathscr{P}}\big(E,\Delta_E;W^E_{\bullet,\bullet}\big),\frac{A_{Y,\Delta_Y}(E)}{S_L(E)}\Bigg\}=\min\Bigg\{\inf_{\mathscr{P}\in\mathbf{l}}\delta_{\mathscr{P}}\big(E,\Delta_E;W^E_{\bullet,\bullet}\big),1+\frac{c}{2(3-2c)}\Bigg\}.
$$
Thus, to complete the~proof, it is enough to show that $\delta_{\mathscr{P}}(E,\Delta_E;W^E_{\bullet,\bullet})>1$ for every point $\mathscr{P}\in\mathbf{l}$.

Fix a point $\mathscr{P}\in\mathbf{l}$.
Let $\mathbf{s}$ be the~smooth curve in $E\cong\mathbb{P}^1\times\mathbb{P}^1$ such that $\mathbf{s}^2=0$, $\mathbf{s}\cdot\mathbf{l}=1$,~$\mathscr{P}\in\mathbf{s}$.
Then $E\vert_{E}\sim-\mathbf{s}$, $R_E\sim 2\mathbf{l}+\mathbf{s}$, and $(\nu^*(L)-uE)\big\vert_{E}\sim_{\mathbb{R}}(2-2c)\mathbf{l}+u\mathbf{s}$.

Let $\alpha\colon\widetilde{E}\to E$ be the~blow up of the~point $\mathscr{P}$, let $\mathbf{e}$ be the~exceptional curve of the~blow~up~$\alpha$,
and let $\widetilde{\mathbf{s}}$, $\widetilde{\mathbf{l}}$, $R_{\widetilde{E}}$
be the~proper transforms on $\widetilde{E}$ of the~curves $\mathbf{s}$, $\mathbf{l}$, $R_E$, respectively.
Set $\Delta_{\widetilde{E}}=cR_{\widetilde{E}}$.
Then $\widetilde{E}$ is a~smooth del Pezzo surface of degree~$7$,
$\widetilde{\mathbf{s}}\cap\widetilde{\mathbf{l}}=\varnothing$,
and $\widetilde{\mathbf{s}}$, $\widetilde{\mathbf{l}}$, $\mathbf{e}$ are all $(-1)$-curves in $\widetilde{E}$.
Let $v$ be a~non-negative real number. Then
$$
\alpha^*\Big(\big(\nu^*(L)-uE\big)\big\vert_{E}\Big)-v\mathbf{e}\sim_{\mathbb{R}} (2-2c)\widetilde{\mathbf{l}}+u\widetilde{\mathbf{s}}+(2-2c+u-v)\mathbf{e},
$$
and it is pseudoeffective $\iff$ $v\leqslant 2-2c+u$.
For $v\in [0,2-2c+u]$, let $\widetilde{P}(u,v)$ be the~positive part of the~Zariski decomposition of this divisor,
and let $\widetilde{N}(u,v)$ be its negative part. Set
$$
S_L\big(W^{E}_{\bullet,\bullet};\mathbf{e}\big)=\frac{3}{L^3}\int\limits_0^{3-2c}\int\limits_0^{2-2c+u}\big(\widetilde{P}(u,v)\big)^2dvdu.
$$
Likewise, for every point $O\in\mathbf{e}$, we set
$$
S\big(W_{\bullet, \bullet,\bullet}^{\widetilde{E},\mathbf{e}};O\big)=\frac{3}{L^3}
\int\limits_0^{3-2c}\int\limits_0^{2-2c+u}\big(\widetilde{P}(u,v)\cdot\mathbf{e}\big)^2dvdu+F_O\big(W_{\bullet, \bullet,\bullet}^{\widetilde{E},\mathbf{e}}\big),
$$
where
$$
F_O\big(W_{\bullet, \bullet,\bullet}^{\widetilde{E},\mathbf{e}}\big)=\frac{6}{L^3}\int\limits_0^{3-2c}\int\limits_0^{2-2c+u}\big(\widetilde{P}(u,v)\cdot\mathbf{e}\big)\cdot \mathrm{ord}_O\big(\widetilde{N}(u,v)\big|_\mathbf{e}\big)dvdu.
$$
Then it follows from \cite{AbbanZhuang,Book,Fujita2021} that
\begin{equation}
\label{equation:2-18-final}
\delta_{\mathscr{P}}\big(E,\Delta_E;W^E_{\bullet,\bullet}\big)\geqslant\min\left\{
\min_{O\in\mathbf{e}}\frac{1-\mathrm{ord}_O\big(\Delta_{\widetilde{E}}\big\vert_{\mathbf{e}}\big)}{S_L(W_{\bullet, \bullet,\bullet}^{\widetilde{E},\mathbf{e}};O)},
\frac{A_{E,\Delta_E}(\mathbf{e})}{S_L(W_{\bullet,\bullet}^E;\mathbf{e})}\right\},
\end{equation}
where
$$
A_{E,\Delta_E}\big(\mathbf{e}\big)=
\left\{\aligned
&2-c\ \text{ if } \mathscr{P}\in R_E, \\
&2\ \text{ if } \mathscr{P}\not\in R_E.
\endaligned
\right.
$$
On the~other hand, if $0\leqslant u\leqslant 2-2c$, then
$$
\widetilde{P}(u,v)\sim_{\mathbb{R}}\left\{\aligned
&(2-2c)\widetilde{\mathbf{l}}+u\widetilde{\mathbf{s}}+(2-2c+u-v)\mathbf{e}\ \text{ if } 0\leqslant v\leqslant u, \\
&(2-2c+u-v)\big(\mathbf{e}+\widetilde{\mathbf{l}}\big)+u\widetilde{\mathbf{s}}\ \text{ if } u\leqslant v\leqslant 2-2c,\\
&(2-2c+u-v)\big(\mathbf{e}+\widetilde{\mathbf{l}}+\widetilde{\mathbf{s}}\big)\ \text{ if } 2-2c\leqslant v\leqslant 2-2c+u,
\endaligned
\right.
$$
and
$$
\widetilde{N}(u,v)=\left\{\aligned
&0\ \text{ if } 0\leqslant v\leqslant u, \\
&(v-u)\widetilde{\mathbf{l}}\ \text{ if } u\leqslant v\leqslant 2-2c,\\
&(v-u)\widetilde{\mathbf{l}}+(v-2+2c)\widetilde{\mathbf{s}}\ \text{ if } 2-2c\leqslant v\leqslant 2-2c+u,
\endaligned
\right.
$$
which gives
$$
\big(\widetilde{P}(u,v)\big)^2=\left\{\aligned
&(4-4c)u-v^2\ \text{ if } 0\leqslant v\leqslant u, \\
&u(4-4c+u-2v)\ \text{ if } u\leqslant v\leqslant 2-2c,\\
&(2-2c+u-v)^2\ \text{ if } 2-2c\leqslant v\leqslant 2-2c+u,
\endaligned
\right.
$$
and
$$
\widetilde{P}(u,v)\cdot\mathbf{e}=\left\{\aligned
&v\ \text{ if } 0\leqslant v\leqslant u, \\
&u\ \text{ if } u\leqslant v\leqslant 2-2c,\\
&2-2c+u-v\ \text{ if } 2-2c\leqslant v\leqslant 2-2c+u.
\endaligned
\right.
$$
Similarly, if $2-2c\leqslant u\leqslant 3-2c$, then
$$
\widetilde{P}(u,v)\sim_{\mathbb{R}}\left\{\aligned
&(2-2c)\widetilde{\mathbf{l}}+u\widetilde{\mathbf{s}}+(2-2c+u-v)\mathbf{e}\ \text{ if } 0\leqslant v\leqslant 2-2c, \\
&(2-2c)\widetilde{\mathbf{l}}+(2-2c+u-v)\big(\mathbf{e}+\widetilde{\mathbf{s}}\big)\ \text{ if } 2-2c\leqslant v\leqslant u,\\
&(2-2c+u-v)\big(\mathbf{e}+\widetilde{\mathbf{l}}+\widetilde{\mathbf{s}}\big)\ \text{ if } u\leqslant v\leqslant 2-2c+u,
\endaligned
\right.
$$
and
$$
\widetilde{N}(u,v)=\left\{\aligned
&0\ \text{ if } 0\leqslant v\leqslant 2-2c, \\
&(v-2+2c)\widetilde{\mathbf{s}}\ \text{ if } 2-2c\leqslant v\leqslant u,\\
&(v-u)\widetilde{\mathbf{l}}+(v-2+2c)\widetilde{\mathbf{s}}\ \text{ if } u\leqslant v\leqslant 2-2c+u,
\endaligned
\right.
$$
which gives
$$
\big(\widetilde{P}(u,v)\big)^2=\left\{\aligned
&(4-4c)u-v^2\ \text{ if } 0\leqslant v\leqslant 2-2c, \\
&4(1-c)(1-c+u-v)\ \text{ if } 2-2c\leqslant v\leqslant u,\\
&(2-2c+u-v)^2\ \text{ if } u\leqslant v\leqslant 2-2c+u,
\endaligned
\right.
$$
and
$$
\widetilde{P}(u,v)\cdot\mathbf{e}=\left\{\aligned
&v\ \text{ if } 0\leqslant v\leqslant 2-2c, \\
&2-2c\ \text{ if } 2-2c\leqslant v\leqslant u,\\
&2-2c+u-v\ \text{ if } u\leqslant v\leqslant 2-2c+u.
\endaligned
\right.
$$
Thus, integrating, we get $S_L(W_{\bullet,\bullet}^E;\mathbf{e})=2-\frac{5}{3}c<2-c$ and
$$
S_L\big(W_{\bullet, \bullet,\bullet}^{\widetilde{E},\mathbf{e}};O\big)=
\left\{\aligned
&1-c-\frac{2(5-3c)(1-c)^2}{3(3-2c)^2}\ \text{ if } O\not\in\widetilde{\mathbf{l}}\cup\widetilde{\mathbf{s}}, \\
&1-\frac{2}{3}c\ \text{ if } O\in\widetilde{\mathbf{s}},\\
&1-c\ \text{ if } O\in\widetilde{\mathbf{l}}.
\endaligned
\right.
$$
Therefore, if $\mathscr{P}\not\in R_E$,
then \eqref{equation:2-18-final} gives $\delta_{\mathscr{P}}(E,\Delta_E;W^E_{\bullet,\bullet})>1$.
Similarly, if $\mathscr{P}\in R_E$, then $\widetilde{\mathbf{l}}\cap R_{\widetilde{E}}=\varnothing$,
the~set $\widetilde{\mathbf{s}}\cap R_{\widetilde{E}}\cap\mathbf{e}$ consists of at most $1$ point,
and \eqref{equation:2-18-final} gives $\delta_{\mathscr{P}}(E,\Delta_E;W^E_{\bullet,\bullet})>1$
if $\widetilde{\mathbf{s}}\cap R_{\widetilde{E}}\cap\mathbf{e}=\varnothing$.

To complete the~proof, we may assume that the~intersection $\widetilde{\mathbf{s}}\cap R_{\widetilde{E}}\cap\mathbf{e}$ consists of one~point,
which means that the~curves $\mathbf{s}$ and $R_E$ are tangent at the~point $P$.
As in the~proof of Lemma~\ref{lemma:2-18-OK-case}, let us consider the~following commutative diagram:
$$
\xymatrix{
\widetilde{E}\ar@{->}[d]_\alpha&&\overline{E}\ar@{->}[ll]_\beta\ar@{->}[d]^\gamma\\
E&&\widehat{E}\ar@{->}[ll]^\rho}
$$
where $\beta$ is the~blow up of the~point $\widetilde{\mathbf{s}}\cap R_{\widetilde{E}}\cap\mathbf{e}$,
the~morphism $\gamma$ is the~contraction of the~proper transform of the~curve $\mathbf{e}$ to an~ordinary double point of the~surface $\widehat{E}$,
and $\rho$ is the~contraction of the~proper transform of the~$\beta$-exceptional curve.

Let $\widehat{\mathbf{s}}$, $\widehat{\mathbf{l}}$, $R_{\widehat{E}}$
be the~proper transforms on $\widehat{E}$ of the~curves $\mathbf{s}$, $\mathbf{l}$, $R_E$, respectively.
Then $R_{\widehat{E}}\cap \widehat{\mathbf{s}}=\varnothing$,
and the~curves $\widehat{\mathbf{s}}$, $\widehat{\mathbf{l}}$, $R_{\widehat{E}}$ are smooth.
Let $\mathbf{z}$ be the~$\rho$-exceptional curve.
Then $R_{\widehat{E}}\cap \widehat{\mathbf{l}}\cap\mathbf{z}=\varnothing$, $\mathbf{z}\cong\mathbb{P}^1$,
and the~intersections of the~curves $\mathbf{z}$, $\widehat{\mathbf{s}}$ and $\widehat{\mathbf{l}}$ are given in the~following table:
\begin{center}
\renewcommand\arraystretch{1.4}
\begin{tabular}{|c||c|c|c|}
\hline & $\mathbf{z}$ & $\widehat{\mathbf{s}}$ & $\widehat{\mathbf{l}}$ \\
\hline
\hline
$\mathbf{z}$            & $-\frac{1}{2}$ & $1$& $\frac{1}{2}$\\
\hline
$\widehat{\mathbf{s}}$  & $1$ & $-2$ & $0$\\
\hline
$\widehat{\mathbf{l}}$  & $\frac{1}{2}$ & $0$ & $-\frac{1}{2}$\\
\hline
\end{tabular}
\end{center}
Furthermore, we have
$$
\rho^*\Big(\big(\nu^*(L)-uE\big)\big\vert_{E}\Big)-v\mathbf{z}\sim_{\mathbb{R}}(2-2c)\widehat{\mathbf{l}}+u\widehat{\mathbf{s}}+(2-2c+2u-v)\mathbf{z},
$$
and it is pseudoeffective $\iff$ $v\leqslant 2-2c+2u$.
For $v\in [0,2-2c+2u]$, let $\widehat{P}(u,v)$ be the~positive part of the~Zariski decomposition of this divisor,
and let $\widehat{N}(u,v)$ be its negative part.~Set
$$
S_L\big(W^{E}_{\bullet,\bullet};\mathbf{z}\big)=\frac{3}{L^3}\int\limits_0^{3-2c}\int\limits_0^{2-2c+2u}\big(\widehat{P}(u,v)\big)^2dvdu.
$$
Similarly, for every point $O\in\mathbf{z}$, we set
$$
S\big(W_{\bullet, \bullet,\bullet}^{\widehat{E},\mathbf{z}};O\big)=\frac{3}{L^3}\int\limits_0^{3-2c}\int\limits_0^{2-2c+2u}\big(\widehat{P}(u,v)\cdot\mathbf{z}\big)^2dvdu+F_O\big(W_{\bullet, \bullet,\bullet}^{\widehat{E},\mathbf{z}}\big),
$$
where
$$
F_O\big(W_{\bullet,\bullet,\bullet}^{\widehat{E},\mathbf{z}}\big)=\frac{6}{L^3}\int\limits_0^{3-2c}\int\limits_0^{2-2c+2u}\big(\widehat{P}(u,v)\cdot\mathbf{z}\big)\cdot \mathrm{ord}_O\big(\widehat{N}(u,v)\big|_\mathbf{z}\big)dvdu.
$$
Let $Q$ be the~singular point of the~surface $\widehat{E}$. Then $Q=\widehat{\mathbf{l}}\cap\mathbf{z}$.
Let $\Delta_{\widehat{E}}=cR_{\widehat{E}}$ and  $\Delta_{\mathbf{z}}=\frac{1}{2}Q+\Delta_{\widehat{E}}\vert_{\mathbf{z}}$.
Then it follows from \cite{AbbanZhuang,Book,Fujita2021} that
\begin{equation}
\label{equation:2-18-final-final}
\delta_{\mathscr{P}}\big(E,\Delta_E;W^E_{\bullet,\bullet}\big)\geqslant\min\left\{
\min_{O\in\mathbf{z}}\frac{A_{\mathbf{z},\Delta_{\mathbf{z}}}(O)}{S_L(W_{\bullet, \bullet,\bullet}^{\widehat{E},\mathbf{z}};O)},
\frac{A_{E,\Delta_E}(\mathbf{z})}{S_L(W_{\bullet,\bullet}^E;\mathbf{z})}\right\},
\end{equation}
where $A_{E,\Delta_E}(\mathbf{z})=3-2c$ and $A_{\mathbf{z},\Delta_{\mathbf{z}}}(O)=1-\mathrm{ord}_O(\Delta_{\mathbf{z}})$.
On the~other hand, if $0\leqslant u\leqslant 1-c$, then
$$
\widehat{P}(u,v)\sim_{\mathbb{R}}\left\{\aligned
&(2-2c)\widehat{\mathbf{l}}+u\widehat{\mathbf{s}}+(2-2c+2u-v)\mathbf{z}\ \text{ if } 0\leqslant v\leqslant 2u, \\
&(2-2c+2u-v)\big(\mathbf{z}+\widehat{\mathbf{l}}\big)+u\widehat{\mathbf{s}}\ \text{ if } 2u\leqslant v\leqslant 2-2c,\\
&(2-2c+2u-v)\big(\mathbf{z}+\widehat{\mathbf{l}}+\widehat{\mathbf{s}}\big)\ \text{ if } 2-2c\leqslant v\leqslant 2-2c+2u,
\endaligned
\right.
$$
and
$$
\widehat{N}(u,v)=\left\{\aligned
&0\ \text{ if } 0\leqslant v\leqslant 2u, \\
&(v-2u)\widehat{\mathbf{l}}\ \text{ if } 2u\leqslant v\leqslant 2-2c,\\
&(v-2u)\widehat{\mathbf{l}}+\frac{v-2+2c}{2}\widehat{\mathbf{s}}\ \text{ if } 2-2c\leqslant v\leqslant 2-2c+2u,
\endaligned
\right.
$$
which gives
$$
\big(\widehat{P}(u,v)\big)^2=\left\{\aligned
&(4-4c)u-\frac{v^2}{2}\ \text{ if } 0\leqslant v\leqslant 2u, \\
&2u(2-2c+u-v)\ \text{ if } 2u\leqslant v\leqslant 2-2c,\\
&\frac{(2-2c+2u-v)^2}{2}\ \text{ if } 2-2c\leqslant v\leqslant 2-2c+2u,
\endaligned
\right.
$$
and
$$
\widehat{P}(u,v)\cdot\mathbf{z}=\left\{\aligned
&\frac{v}{2}\ \text{ if } 0\leqslant v\leqslant 2u, \\
&u\ \text{ if } 2u\leqslant v\leqslant 2-2c,\\
&\frac{2-2c+2u-v}{2}\ \text{ if } 2-2c\leqslant v\leqslant 1+u.
\endaligned
\right.
$$
Similarly, if $1-c\leqslant u\leqslant 3-2c$, then
$$
\widehat{P}(u,v)\sim_{\mathbb{R}}\left\{\aligned
&(2-2c)\widehat{\mathbf{l}}+u\widehat{\mathbf{s}}+(2-2c+2u-v)\mathbf{z}\ \text{ if } 0\leqslant v\leqslant 2-2c, \\
&\frac{2-2c+2u-v}{2}\big(2\mathbf{z}+\widehat{\mathbf{s}}\big)+(2-2c)\widehat{\mathbf{l}}\ \text{ if } 2-2c\leqslant v\leqslant 2u,\\
&\frac{2-2c+2u-v}{2}\big(2\mathbf{z}+2\widehat{\mathbf{l}}+\widehat{\mathbf{s}}\big)\ \text{ if } 2u\leqslant v\leqslant 2-2c+2u,
\endaligned
\right.
$$
and
$$
\widehat{N}(u,v)=\left\{\aligned
&0\ \text{ if } 0\leqslant v\leqslant 2-2c, \\
&\frac{v-2+2c}{2}\widehat{\mathbf{s}}\ \text{ if } 2-2c\leqslant v\leqslant 2u,\\
&(v-2u)\widehat{\mathbf{l}}+\frac{v-2+2c}{2}\widehat{\mathbf{s}}\ \text{ if } 2u\leqslant v\leqslant 2-2c+2u,
\endaligned
\right.
$$
which gives
$$
\big(\widehat{P}(u,v)\big)^2=\left\{\aligned
&(4-4c)u-\frac{v^2}{2}\ \text{ if } 0\leqslant v\leqslant 2-2c, \\
&2(1-c)(1-c+2u-v)\ \text{ if } 2-2c\leqslant v\leqslant 2u,\\
&\frac{(2-2c+2u-v)^2}{2}\ \text{ if } 2u\leqslant v\leqslant 1+2u,
\endaligned
\right.
$$
and
$$
\widehat{P}(u,v)\cdot\mathbf{z}=\left\{\aligned
&\frac{v}{2}\ \text{ if } 0\leqslant v\leqslant 2-2c, \\
&1-c\ \text{ if } 2-2c\leqslant v\leqslant 2u,\\
&\frac{2-2c+2u-v}{2}\ \text{ if } 2u\leqslant v\leqslant 1+u.
\endaligned
\right.
$$
Now, integrating, we get $S_L(W_{\bullet,\bullet}^E;\mathbf{z})=3-\frac{7}{3}c<3-2c=A_{E,\Delta_E}(\mathbf{z})$ and
$$
S_L\big(W_{\bullet, \bullet,\bullet}^{\widehat{E},\mathbf{z}};O\big)=
\left\{\aligned
&1-c-\frac{(1-c)(31c^2-90c+65)}{12(3-2c)^2}\ \text{ if } O\not\in\widehat{\mathbf{l}}\cup\widehat{\mathbf{s}}, \\
&\frac{1}{2}-\frac{c}{2}\ \text{ if } O\in\widehat{\mathbf{l}},\\
&1-\frac{2c}{3}\ \text{ if } O\in\widehat{\mathbf{s}}.
\endaligned
\right.
$$
Hence, using \eqref{equation:2-18-final-final}, we get $\delta_{\mathscr{P}}(E,\Delta_E;W^E_{\bullet,\bullet})>1$,
which gives $\delta_P(Y,\Delta_Y)>1$.
\end{proof}

Now, combining Lemmas~\ref{lemma:2-18-easy-case}, \ref{lemma:2-18-OK-case} and \ref{lemma:2-18-blow-up}, we obtain Theorem~\ref{theorem:2-18-K-stable}.

\section{Smooth Fano 3-folds in the~family \textnumero 3.4}
\label{section:3-4}

Let $Y=\mathbb{P}^1\times\mathbb{F}_1$.
Identify $Y=(\mathbb{A}^2\setminus 0)^3/\mathbb{G}_m^3$ for the~$\mathbb{G}_m^3$-action
$$
\big((x_0,x_1),(y_0,y_1),(z_0,z_1)\big)\mapsto
\Bigg(\big(\lambda x_0,\lambda x_1\big),\Big(\big(\mu y_0,\mu y_1\big),\frac{\nu z_0}{\mu},\nu z_1\Big),\Bigg)
$$
where $(\lambda,\mu,\nu)\in\mathbb{G}_m^3$, and $((x_0,x_1),(y_0,y_1),(z_0,z_1))$ are coordinates on $(\mathbb{A}^2)^3$.
We will use
\begin{itemize}
\item $([x_0:x_1],[y_0:y_1;z_0:z_1])$ as coordinates on $\mathbb{P}^1\times\mathbb{F}_1$,
\item $[x_0:x_1]$ as coordinates on the~first factor of~$Y=\mathbb{P}^1\times\mathbb{F}_1$,
\item $[y_0:y_1;z_0:z_1]$ as coordinates on the~second factor of $Y=\mathbb{P}^1\times\mathbb{F}_1$,
\item $[y_0:y_1]$ as coordinates on the~base of the~natural projection~$\mathbb{F}_1\to\mathbb{P}^1$.
\end{itemize}

To distinguish the~first factor of~$Y=\mathbb{P}^1\times\mathbb{F}_1$
and the~base of the~natural projection~$\mathbb{F}_1\to\mathbb{P}^1$,
we will use notations $\mathbb{P}^1_{x_0,x_1}$ and $\mathbb{P}^1_{y_0,y_1}$ for them, respectively.
Then $Y=\mathbb{P}^1_{x_0,x_1}\times\mathbb{F}_1$, and we have the~following commutative diagram:
$$
\xymatrix{
&Y\ar@{->}[d]^\phi\ar@{->}[dl]_{\pi_1}\ar@{->}[drr]^{\psi}\ar@{->}[rr]^{\pi_2}&&\mathbb{F}_1\ar@{->}[d]\\
\mathbb{P}^1_{x_0,x_1}&\mathbb{P}^1_{x_0,x_1}\times\mathbb{P}^1_{y_0,y_1}\ar@{->}[rr]\ar@{->}[l]&&\mathbb{P}^1_{y_0,y_1}}
$$
where $\pi_1$ and $\pi_2$ are projections to the~first and the~second factors, respectively,
$\phi$ is the~$\mathbb{P}^1$-bundle
$$
\big([x_0:x_1],[y_0:y_1;z_0:z_1]\big)\mapsto([x_0:x_1],[y_0:y_1]),
$$
the~morphism $\psi$ is the~$\mathbb{P}^1\times\mathbb{P}^1$-bundle  $([x_0:x_1],[y_0:y_1;z_0:z_1])\mapsto [y_0:y_1]$,
and all other morphisms are natural projections.
Let $F$ be a~fiber of the~morphism $\pi_1$, let $S$ be a~fiber of the~morphism $\psi$,
let $E$ be the~exceptional surface of the~birational contraction
$Y\to\mathbb{P}^1_{x_0,x_1}\times\mathbb{P}^2$ given by
$$
\big([x_0:x_1],[y_0:y_1;z_0:z_1]\big)\mapsto\big([x_0:x_1];[y_0z_0:y_1z_0:z_1]\big),
$$
let~$R$ be a smooth surface in $|2F+2E+2S|$,
and let $\eta\colon X\to \mathbb{P}^1_{x_0,x_1}\times\mathbb{F}_1$ be a double cover~ramified in the~surface $R$.
Then $X$ is a smooth Fano threefold in the~family \textnumero 3.4.

Recall that $X$ is K-stable $\iff$ $X$ is K-polystable, because $\mathrm{Aut}(X)$ is finite \cite{CheltsovShramovPrzyjalkowski}.
Let $\Delta_Y=\frac{1}{2}R$. Then it follows from \cite{Dervan,Fujita2019b,LiuZhu,Zhuang} that
\begin{center}
$X$ is K-polystable $\iff$ $(Y,\Delta_Y)$ is K-polystable.
\end{center}
The goal of this section is to prove the~following result.

\begin{theorem}
\label{theorem:3-4-K-stable}
The log Fano pair $(Y,\Delta_Y)$ is K-stable.
\end{theorem}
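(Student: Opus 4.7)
The plan is to adapt the Abban--Zhuang strategy already used in Section~\ref{section:2-18} for the family \textnumero 2.18, now on the ambient threefold $Y=\mathbb{P}^1\times\mathbb{F}_1$. Set $L=-K_Y-\Delta_Y$. By the valuative criterion of \cite{Li,Fujita2019}, it suffices to show $\delta_P(Y,\Delta_Y)>1$ for every point $P\in Y$, and for this I will produce, depending on $P$, a flag (divisor $T\subset Y$, curve $C\subset T$, point $O\in C$) together with the standard estimate
$$
\delta_P(Y,\Delta_Y)\geqslant\min\left\{\frac{A_{C,\Delta_C}(O)}{S_L\bigl(W^{T,C}_{\bullet,\bullet,\bullet};O\bigr)},\ \frac{A_{T,\Delta_T}(C)}{S_L\bigl(W^T_{\bullet,\bullet};C\bigr)},\ \frac{A_{Y,\Delta_Y}(T)}{S_L(T)}\right\},
$$
and check that each ratio exceeds $1$. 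The natural candidates for $T$ are the divisors in the linear systems $|F|$, $|S|$ and $|E|$ coming from the three fibrations $\pi_1$, $\psi$ and $\phi$ appearing in the commutative diagram preceding the statement of the theorem.

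The first step is a hierarchy of easy reductions. If $P\notin R$, I will take $T$ to be a general member of $|S|$ or $|F|$ through $P$; the restriction $R|_T$ does not contain $P$, so following the computation of Lemma~\ref{lemma:2-18-easy-case} one gets $\delta_P>1$ directly. When $P\in R$ but the fiber of $\psi$ (respectively $\pi_1$) through $P$ is not contained in $R$, one chooses $T$ to be that fiber surface, which is $\mathbb{P}^1\times\mathbb{P}^1$ (respectively $\mathbb{F}_1$), and runs the same weighted blow up argument as in Lemma~\ref{lemma:2-18-OK-case}: first the ordinary blow up of $P$ on $T$, then a $(1,2)$-toric weighted blow up whenever $R|_T$ is tangent at $P$ to one of the distinguished curves on $T$. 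The Zariski decompositions on $T$ and on its (possibly singular del Pezzo) blow ups are straightforward, and the resulting $S$-invariants satisfy the required inequalities by explicit integration.

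The genuinely new difficulty, as flagged in the introduction, appears when the fiber $\mathbf{f}$ of $\pi_1$ (or of $\psi$) through $P$ is contained in $R$. In this case one must blow up the smooth curve $\mathbf{f}\subset Y$ to get an exceptional surface $E_\mathbf{f}$, move the problem onto $E_\mathbf{f}$ via an Abban--Zhuang restriction, and then run another flag on $E_\mathbf{f}$, parallel to Lemma~\ref{lemma:2-18-blow-up}. The main obstacle is that two configurations of the ramification surface $R$ produce deadlocks: the usual sequence ``ordinary blow up, then $(1,2)$-weighted blow up'' on $E_\mathbf{f}$ gives one of the $S$-to-$A$ ratios equal to $1$ and does not close the estimate. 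My plan is to resolve each deadlock by applying Abban--Zhuang theory to the exceptional surface of a suitable \emph{toric weighted blow up} of $Y$ itself (rather than of the flag surface), using the fact that $Y$ is toric. The weighted toric structure enlarges the relevant log discrepancies while controlling the positive part of the Zariski decomposition purely by combinatorics on the Newton polytopes of the divisors involved. The required positive and negative parts, together with the $S$-integrals they produce, are tabulated in Appendix~\ref{section:tables}, and I will invoke those tables to finish the two bad cases.

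Combining the easy reductions with the two deadlock calculations gives $\delta_P(Y,\Delta_Y)>1$ for every $P\in Y$, hence Theorem~\ref{theorem:3-4-K-stable}. I expect roughly $80\%$ of the work to lie in verifying, by toric means, that the exceptional divisors of the chosen weighted blow ups indeed produce $S$-invariants strictly below their log discrepancies in the two deadlock configurations; the other cases follow from routine adaptations of Section~\ref{section:2-18}.
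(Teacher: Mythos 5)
Your high-level strategy (valuative criterion plus Abban--Zhuang restriction to flags coming from the three fibrations) is indeed what Section~\ref{section:3-4} does, but the proposal misidentifies where the genuine difficulty lies and how it is resolved, and it also glosses over a strictness issue that the paper has to handle carefully.

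First, the strictness. You propose to show $\delta_P(Y,\Delta_Y)>1$ for every $P$ and ``check that each ratio exceeds $1$.'' In several of the key cases the Abban--Zhuang flag estimate gives only $\delta_P\geqslant 1$, not $>1$: this happens already in Lemma~\ref{lemma:3-4-E} (when $\mathbf{l}\subset\mathrm{Supp}(R_E)$), in Lemma~\ref{lemma:3-4-F}, and in Lemma~\ref{lemma:3-4-S}(3). The paper does not argue by pointwise strict inequalities; it fixes a prime divisor $\mathbf{F}$ over $Y$, proves $\beta_{Y,\Delta_Y}(\mathbf{F})>0$, and in the cases where the flag bound is only $\geqslant 1$ it exploits the equality analysis of the Abban--Zhuang/Fujita inequality (when the minimum is attained, the $\delta$-value equals $1/S_L(T)$, which is strictly $>1$) together with information on the center $\mathfrak{C}=C_Y(\mathbf{F})$ (surface, curve, or point) to close the gap. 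Your proposal does not address this; as written, you would only obtain K-semistability, not K-stability.

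Second, and more seriously, the description of the ``deadlocks'' is wrong. You say the hard case arises ``when the fiber $\mathbf{f}$ of $\pi_1$ (or of $\psi$) through $P$ is contained in $R$'' and plan to blow up the smooth curve $\mathbf{f}$. But the fibers of $\pi_1$ and $\psi$ are the surfaces $F\cong\mathbb{F}_1$ and $S\cong\mathbb{P}^1\times\mathbb{P}^1$; since $R$ is an irreducible smooth surface, neither can be contained in $R$, so the case you describe is vacuous. The actual hard locus identified in Section~\ref{subsection:3-4-step-1} is: $P\notin E$, $P\in R$, $R_F$ singular (as a curve, globally) but smooth at $P$, $R_S$ singular at $P$, and $Z\subset R$ (here $Z$ is the one-dimensional fiber of $\phi$, not of $\pi_1$ or $\psi$). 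After Lemma~\ref{lemma:S-A1} excludes the $A_1$-singularity, the two remaining configurations are governed by the singularity type of $R_S$ at $P$, called ($\mathbb{A}_3$) and ($\mathbb{D}_4$), and the paper does \emph{not} blow up any curve at this stage. Instead, in Sections~\ref{subsection:3-4-step-2} and~\ref{subsection:3-4-step-3} it performs toric weighted blow ups of the \emph{point} $P$ in $Y$ with weights $(1,3,1)$ (case ($\mathbb{D}_4$)) and $(2,4,1)$ (case ($\mathbb{A}_3$)), applies the 3-level Abban--Zhuang restriction directly to the weighted-blow-up exceptional surface $G\cong\mathbb{P}(1,3,1)$ or $\mathbb{P}(1,2,1)$, and computes all the Zariski decompositions by running the toric MMP across the models $W_0\dashrightarrow W_1\dashrightarrow W_2$ ($\dashrightarrow W_3$) and pulling back to a common resolution $\widetilde{W}$. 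Your vague final sentence about ``a suitable toric weighted blow up of $Y$ itself'' gestures in the right direction, but because your preceding framing (blow up $\mathbf{f}$, then a $(1,2)$-weighted blow up of a curve on the exceptional surface $E_\mathbf{f}$) is modeled on Lemma~\ref{lemma:2-18-blow-up} from the \textnumero 2.18 case and simply does not apply here, the setup leading into the hard step is broken.

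In short: the general Abban--Zhuang template is right, and the easy reductions are roughly parallel to Section~\ref{section:2-18}, but you need to (a) argue via $\beta$ and equality analysis rather than pointwise $\delta_P>1$, and (b) replace the curve-blow-up picture with the correct diagnosis (the $\mathbb{A}_3$/$\mathbb{D}_4$ singularity types of $R_S$ at $P$, with $Z\subset R$) and with weighted blow ups of the point $P$, together with the multi-model toric Zariski decomposition machinery that makes the $S$-integrals on the weighted exceptional $\mathbb{P}(1,3,1)$ and $\mathbb{P}(1,2,1)$ computable.
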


Before proving Theorem~\ref{theorem:3-4-K-stable}, observe that $E=\{z_0=0\}\subset Y$, and $R$ is given in $Y$ by
\begin{multline}
\label{equation:R}
x_0^2\Big(\big(a_0y_0^2+b_0y_0y_1+c_0y_1^2\big)z_0^2+\big(d_0y_0+e_0y_1\big)z_0z_1+f_0z_1^2\Big)+\\
+x_0x_1\Big(\big(a_1y_0^2+b_1y_0y_1+c_1y_1^2\big)z_0^2+\big(d_1y_0+e_1y_1\big)z_0z_1+f_1z_1^2\Big)+\\
+x_1^2\Big(\big(a_2y_0^2+b_2y_0y_1+c_2y_1^2\big)z_0^2+\big(d_2y_0+e_2y_1\big)z_0z_1+f_2z_1^2\Big)=0,
\end{multline}
where $a_0$, $b_0$, $c_0$, $d_0$, $e_0$, $f_0$, $a_1$, $b_1$, $c_1$, $d_1$, $e_1$, $f_1$,
$a_2$, $b_2$, $c_2$, $d_2$, $e_2$, $f_2$ are some numbers.

\begin{lemma}
\label{lemma:F-S}
Set $R_E=R\vert_{E}$, $R_S=R\vert_{S}$, $R_F=R\vert_{F}$.
Then
\begin{itemize}
\item[($\mathrm{i}$)] $R_E$ is a disjoint union of two fibers of the~projection $\pi_1\vert_{E}\colon E\to\mathbb{P}^1_{x_0,x_1}$,
\item[($\mathrm{ii}$)] the~curve $R_S$ is reduced,
\item[($\mathrm{iii}$)] if $R_F$ is reduced, then it has one or two ordinary double points,
\item[($\mathrm{iv}$)] if $R_F$ is not reduced, then $\mathrm{Sing}(R_F)=F\cap E$.
\end{itemize}
Let $P$ be a point in $F\cap S$ such that $P\not\in E$ and $P\in R$,
let $Z$ be the~fiber of $\phi$ that contains $P$, and let $C$ be the~fiber of $\pi_2$ that contains $P$.
Then
\begin{itemize}
\item[($\mathrm{v}$)] if $Z\subset R$, then $R_F$ and $R_S$ are singular at some points in $Z$,
\item[($\mathrm{vi}$)] if $C\subset R$, then $R_S$ is singular at some point in $C$.
\item[($\mathrm{vii}$)] at least one of the~surfaces $R_F$ and $R_S$ is smooth at $P$,
\item[($\mathrm{viii}$)] if $R_S$ is singular at $P$, and $Z\not\subset R$, then $R_F$ is smooth.
\end{itemize}
\end{lemma}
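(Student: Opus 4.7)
The proof is computational: for each claim I exploit the explicit form (\ref{equation:R}) and use the smoothness of $R$ to rule out degenerate configurations. The two recurrent tools are (a) the tangent-space criterion that, for two smooth surfaces $A,B\subset Y$, the intersection $A\cap B$ is singular at a point $P$ if and only if $T_P A=T_P B$, and (b) the observation that if $R|_A$ is non-reduced along a curve $D\subset A$, then the derivative of the defining equation of $R$ in the direction normal to $A$ must be nowhere zero on $D$, since otherwise $R$ itself would be singular at a point of $D$.

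Claim~(i) follows by setting $z_0=0$ in (\ref{equation:R}), which yields $z_1^2(f_0 x_0^2+f_1 x_0x_1+f_2x_1^2)=0$; since $z_1$ is nowhere zero on $E$, the curve $R_E$ on $E\cong\mathbb{P}^1_{x_0,x_1}\times\mathbb{P}^1_{y_0,y_1}$ is cut out by $f_0 x_0^2+f_1 x_0x_1+f_2 x_1^2$, a union of two fibers of $\pi_1|_E$, and the smoothness of $R$ rules out the case $f_1^2-4f_0f_2=0$ via (b) applied to $A=E$. Claim~(vii) is immediate from (a): since $F$ meets $S$ transversally along $Z$, one has $T_PF\neq T_PS$, so $T_PR$ cannot coincide with both. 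Claims~(v) and (vi) are intersection-theoretic: if $Z\subset R$, then $R|_F=Z+C$ with $C$ of class $s+\ell$ on $F\cong\mathbb{F}_1$, and $Z\cdot C=2$ produces two singular points of $R_F$ lying on $Z$; the analogous argument on $S\cong\mathbb{P}^1\times\mathbb{P}^1$ handles the $S$-restriction in (v) and the hypothesis $C\subset R$ in (vi).

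For (iii), the class $2s+2\ell$ and adjunction give $p_a(R_F)=0$; a finite case analysis of reduced effective decompositions of $2s+2\ell$ on $\mathbb{F}_1$ (which, when reducible, must be of the form $(s+2\ell)+s$, $(s+\ell)+(s+\ell)$, or $(s+\ell)+s+\ell$) shows that $R_F$ has at most two nodes. For (iv), a non-reduced component of $R_F$ different from $s=F\cap E$ must have class $s+\ell$; by (b) the zero locus of the normal derivative on $F$ lies in $|2s+2\ell|$ and meets any such component in $(s+\ell)\cdot(2s+2\ell)=2$ points where $R$ would be singular, a contradiction. Hence the only possible non-reduced component is $s$, and a direct inspection of (\ref{equation:R}) restricted to $F$ forces $f_0=d_0=e_0=0$, yielding $R_F=2s+\{a_0y_0^2+b_0y_0y_1+c_0y_1^2=0\}$ with $\mathrm{Sing}(R_F)=s=F\cap E$. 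The identical tangent-normal argument applied on $S\cong\mathbb{P}^1\times\mathbb{P}^1$ excludes hypothetical non-reduced components of $R_S$ and yields~(ii).

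For (viii), after choosing coordinates with $P=([1:0],[1:0;1:0])$, the conditions $P\in R$, $R_S$ singular at $P$, and $Z\not\subset R$ translate respectively to $a_0=0$, $a_1=d_0=0$, and $f_0\neq 0$; claim~(vii) then forces $b_0\neq 0$. The equation of $R_F$ simplifies to
$$y_1(b_0y_0+c_0y_1)z_0^2+e_0y_1z_0z_1+f_0z_1^2=0,$$
and a direct check in each of the four standard affine charts of $F\cong\mathbb{F}_1$ shows that no point satisfies this equation together with the vanishing of both partial derivatives; hence $R_F$ is smooth. The most delicate step in the whole argument is the non-reducedness analysis behind (ii) and (iv), since it is the only place where the global smoothness of $R$ enters in an essentially local way; everything else reduces to intersection theory on the surfaces $E$, $F$, $S$ or to direct manipulation of (\ref{equation:R}).
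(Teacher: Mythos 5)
Your proof is correct in substance, and its overall route differs genuinely from the paper's. The paper proves (i)--(iv) and (vii)--(viii) by explicit manipulations of equation~\eqref{equation:R} in coordinates (for (ii), writing $f=gh^2$ and exhibiting singular points of $R$; for (iii)--(iv), pushing $R_F$ forward to a reduced conic under $\upsilon$; for (vii), a tangent-cone computation in an affine chart; for (viii), computing the determinant of the conic $\upsilon(R_F)$). You instead package the key mechanism into your tools~(a) and~(b), reducing most of the work to the statement that the zero divisor of the normal derivative of $g$ on $E$, $S$, or $F$ lies in a fixed linear system and has positive intersection with any putative multiple component; intersection theory on $\mathbb{P}^1\times\mathbb{P}^1$ and $\mathbb{F}_1$ then finishes. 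This is a clean, more conceptual argument than the paper's, and it correctly handles the special case where the normal derivative is identically zero on the slice (there one checks directly that $dg$ restricted to a non-reduced component still vanishes). The two approaches prove the same thing; yours makes the role of smoothness of $R$ more transparent, while the paper's is more self-contained and simultaneously produces the explicit descriptions of $R_F$ that are used downstream.

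Two small slips worth correcting. In~(v) you write $R|_F = Z + C$ with $C$ of class $s+\ell$; since $R|_F \sim 2s + 2\ell$ and $Z \sim \ell$, the residual curve $C$ has class $2s + \ell$, not $s+\ell$. The intersection number $Z\cdot C = 2$ you quote is the correct one for the correct class ($\ell\cdot(2s+\ell)=2$), so your conclusion is unaffected, but the stated class should be fixed; the parallel case on $S$ gives $T \sim Z+2C$ with $Z\cdot T = 2$. In~(iii) your case list of reducible reduced decompositions is right, but you should remark that in each case the pairwise intersection numbers of irreducible components meeting each other equal~$1$, so the intersections are automatically transversal and the singularities are indeed ordinary double points (this is what the paper gets for free from the conic picture). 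Also, in~(iv), a non-reduced component of $R_F$ distinct from $\mathbf{e}$ need not have class $s+\ell$ (it could have class $\ell$, as in $R_F = 2\mathbf{e} + 2\ell$), but since $\ell\cdot(2s+2\ell) = 2 > 0$ as well, your intersection-theoretic exclusion applies verbatim to any non-reduced component other than $\mathbf{e}$; it would be cleaner to say so rather than to pin down the class.
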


\begin{proof}
First, let us choose appropriate coordinates on $Y$ such that $F=\{x_1=0\}$ and $S=\{y_1=0\}$.
To prove $(\mathrm{i})$, observe that
$$
R_E=\big\{z_0=0, f_0x_0^2+f_1x_0x_1+f_2x_1^2=0\big\}\subset Y.
$$
Moreover, if $f_0x_0^2+f_1x_0x_1+f_2x_1^2$ is a square, then $R$ is singular. This proves $(\mathrm{i})$.

Let us prove $(\mathrm{ii})$. Using \eqref{equation:R}, we see that $R_S=\{f=0\}\subset S$ for
$$
f=x_0^2(a_0z_0^2+d_0z_0z_1+f_0z_1^2)+x_0x_1(a_1z_0^2+d_1z_0z_1+f_1z_1^2)+x_1^2(a_2z_0^2+d_2z_0z_1+f_2z_1^2),
$$
where we consider $([x_0:x_1],[z_0:z_1])$ as coordinates on $S\cong\mathbb{P}^1\times\mathbb{P}^1$.
Hence, if $R_S$ is not reduced, then~$f=gh^2$ for a non-constant polynomial $h$ and a polynomial $g$.
Then we can rewrite \eqref{equation:R} as
$$
y_1\Big(x_0^2\big((b_0y_0+c_0y_1)z_0^2+e_0z_0z_1\big)+x_0x_1\big((b_1y_0+c_1y_1)z_0^2+e_1z_0z_1\big)+x_1^2\big((b_2y_0+c_2y_1)z_0^2+e_2z_0z_1\big)\Big)+gh^2=0,
$$
which implies that the~surface $R$ is singular at every point of the~non-empty subset
$$
\big\{y_1=0,x_0^2\big(b_0y_0z_0^2+e_0z_0z_1\big)+x_0x_1\big(b_1y_0z_0^2+e_1z_0z_1\big)+x_1^2\big(b_2y_0z_0^2+e_2z_0z_1\big),h=0\big\}\subset Y,
$$
which is impossible by assumption. Hence, we see that $R_S$ is reduced.  This proves $(\mathrm{ii})$.

Let us prove $(\mathrm{iii})$ and $(\mathrm{iv})$. Identify $F=\mathbb{F}_{1}$ with coordinates $[y_0:y_1;z_0:z_1]$.
Then
$$
R_F=\big\{(a_0y_0^2+b_0y_0y_1+c_0y_1^2)z_0^2+(d_0y_0+e_0y_1)z_0z_1+f_0z_1^2=0\big\}\subset F.
$$
Let $\upsilon\colon\mathbb{F}_1\to\mathbb{P}^2$ be the~blow up $[y_0:y_1;z_0:z_1]\mapsto[y_0z_0:y_1z_0:z_1]$,
and let $\mathbf{e}$ be its exceptional curve.
Then $\upsilon(R_F)$ is a reduced conic.
Furthermore, if $f_0\ne 0$, then $R_F\cap\mathbf{e}=\varnothing$,
and either $R_F$~is~smooth, or the~curve $R_F$ is a union of two smooth irreducible curves intersecting transversally at one point.
Thus, we may assume that $f_0=0$.
Then  $\upsilon(R_F)$ contains $\upsilon(\mathbf{e})$,
and $R_F=\mathbf{e}+R_F^\prime$, where
$$
R_F^\prime=\big\{(a_0y_0^2+b_0y_0y_1+c_0y_1^2)z_0+(d_0y_0+e_0y_1)z_1=0\big\}\subset F.
$$
If $d_0\ne 0$ or $e_0\ne 0$, then $R_F^\prime$ is the~proper transform of the~conic $\upsilon(R_F)$,
which is smooth at $\upsilon(\mathbf{e})$.
In this case, if $\upsilon(R_F)$ is irreducible, then the~curve $R_F^\prime$ is smooth,
and $R_F$ has one ordinary double point --- the~intersection point $\mathbf{e}\cap R_F^\prime$.
Similarly, if $\upsilon(R_F)$ is reducible, then $R_F$ has two ordinary double points ---
the~intersection point $\mathbf{e}\cap R_F^\prime$, and the~unique singular point of the~curve $R_F^\prime$.
Finally, if $d_0=0$ and $e_0=0$, then $R_F=2\mathbf{e}+\mathbf{l}+\mathbf{l}^\prime$,
where $\mathbf{l}+\mathbf{l}^\prime=\{a_0y_0^2+b_0y_0y_1+c_0y_1^2=0\}\subset F$,
so that $\mathbf{l}$ and $\mathbf{l}^\prime$ are distinct fibers of the~projection $\mathbb{F}_1\to\mathbb{P}^1_{y_0,y_1}$.
This proves $(\mathrm{iii})$ and $(\mathrm{iv})$.

Now, choosing appropriate coordinates on $Y$, we may assume that $P=([1:0],[1:0;1:0])$.
Then $a_0=0$, since $P\in R$.
Note also that $Z=\{x_1=0,y_1=0\}$ and $C=\{y_1=0,z_1=0\}$.

Both assertions ($\mathrm{v}$) and ($\mathrm{vi}$) are obvious.
Now, let us prove $(\mathrm{vii})$.
In the~affine chart $x_0y_0z_0\ne 0$, the~surface $R$ is given by
$$
a_1x+b_0y+d_0z+\text{higher order terms}=0,
$$
where $x=\frac{x_1}{x_0}$, $y=\frac{y_1}{y_0}$, $z=\frac{z_1}{z_0}$.
which implies that $(a_1,b_0,d_0)\ne (0,0,0)$, because $R$ is smooth at~$P$.
If $R_F$ is singular at $P$, then $b_0=0$ and $d_0=0$.
If $R_S$ is singular at $P$, then $a_1=0$ and $d_0=0$.
Hence, if both $R_F$ and $R_S$ are singular at $P$, then $(a_1,b_0,d_0)=(0,0,0)$.
This proves $(\mathrm{vii})$.

Let's prove ($\mathrm{viii}$).
Suppose that $R_S$ is singular at $P$, and $Z\not\subset R$. Then $a_1=d_0=0$ and $b_0f_0\ne 0$.
Observe that $R_F\cap \mathbf{e}=\varnothing$, since $f_0\ne 0$.
Now, computing the~defining equation of the~conic $\upsilon(R_F)$,
we see that this conic is smooth, because $b_0f_0\ne 0$. Then $R_F$ is also smooth.
This proves ($\mathrm{viii}$).
\end{proof}

\subsection{The proof}
\label{subsection:3-4-step-1}

Set $L=-(K_Y+\Delta_Y)$. Then $L\sim_{\mathbb{Q}}F+E+2S$ and $L^3=9$.
To prove~Theorem~\ref{theorem:3-4-K-stable},
we must show that
$\beta_{Y,\Delta_{Y}}(\mathbf{E})=A_{Y,\Delta_Y}(\mathbf{E})-S_{L}(\mathbf{E})>0$  for every prime divisor $\mathbf{E}$ over $Y$, where
$$
S_{L}(\mathbf{E})=\frac{1}{L^3}\int\limits_0^\infty\mathrm{vol}\big(L-u\mathbf{E}\big)du.
$$
Fix a prime divisor $\mathbf{F}$ over $Y$. Let us show that $\beta_{Y,\Delta_Y}(\mathbf{F})>0$.
Set $\mathfrak{C}=C_Y(\mathbf{F})$. Then
\begin{enumerate}
\item either $\mathfrak{C}$ is a~point,
\item or $\mathfrak{C}$ is an~irreducible curve,
\item or~$\mathfrak{C}$ is an~irreducible surface.
\end{enumerate}
In each case, let $P$ be some point in $\mathfrak{C}$. If $\beta_{Y,\Delta_Y}(\mathbf{F})\leqslant 0$, then $\delta_P(Y,\Delta_Y)\leqslant 1$, where
$$
\delta_P(Y,\Delta_Y)=
\inf_{\substack{\mathbf{E}/Y\\ P\in C_Y(\mathbf{E})}}\frac{A_{Y,\Delta_Y}(\mathbf{E})}{S_{L}(\mathbf{E})},
$$
where the~infimum is taken over all prime divisors $\mathbf{E}$ over $Y$ whose centers on $Y$ contain~$P$.

Changing coordinates on $Y$, we may assume that $P=([1:0],[1:0;a:b])$ for some $[a:b]\in\mathbb{P}^1$ such that $ab=0$.
Thus, we have the~following two possibilities:
\begin{itemize}
\item[($\clubsuit$)] $P=([1:0],[1:0;0:1])\in E$,
\item[($\spadesuit$)] $P=([1:0],[1:0;1:0])\not\in E$.
\end{itemize}
Moreover, we can choose $S$ to be the~fiber of the~morphism $\psi\colon Y\to\mathbb{P}^1_{y_0,y_1}$ that contains the~point~$P$,
and we can choose $F$ to be the~fiber of the~morphism $\pi_1\colon Y\to\mathbb{P}^1_{x_0,x_1}$ that contains $P$. Then
\begin{align*}
E&=\{z_0=0\}\cong\mathbb{P}^1\times\mathbb{P}^1,\\
S&=\{y_1=0\}\cong\mathbb{P}^1\times\mathbb{P}^1,\\
F&=\{x_1=0\}\cong\mathbb{F}_1.
\end{align*}

\begin{lemma}
\label{lemma:3-4-surfaces}
Suppose that $\mathfrak{C}$ is a~surface. Then $\beta_{Y,\Delta_{Y}}(\mathbf{F})>0$.
\end{lemma}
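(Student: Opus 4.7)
The plan is to classify $\mathfrak{C}$ by its numerical class on $Y$ and apply a uniform strict bound. Since $\mathfrak{C}$ is a surface, the prime divisor $\mathbf{F}$ is divisorial, so up to rescaling of the valuation we may take $\mathbf{F}=\mathfrak{C}$ and the goal reduces to proving $S_L(\mathfrak{C})<A_{Y,\Delta_Y}(\mathfrak{C})$.

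First, I would record that $\mathrm{Pic}(Y)$ is freely generated by $F$, $E$, $S$, with the only nonzero triple intersections $F\cdot E^2=-1$ and $F\cdot E\cdot S=1$, giving $L^3=9$. The pseudoeffective cone of $Y=\mathbb{P}^1\times\mathbb{F}_1$ is spanned by $F$, $E$, $S$, so for any prime divisor I may write $\mathfrak{C}\sim aF+bE+cS$ with $a,b,c\in\mathbb{Z}_{\geq 0}$, and the pseudoeffective threshold of $L-u\mathfrak{C}$ equals
\[
\tau(\mathfrak{C})=\min\bigl\{1/a,\,1/b,\,2/c\bigr\}
\]
with the convention $1/0=\infty$. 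Since $\mathrm{vol}(L-u\mathfrak{C})$ is continuous, monotone decreasing from $L^3$ to $0$ on $[0,\tau(\mathfrak{C})]$, and strictly less than $L^3$ in a neighborhood of $\tau(\mathfrak{C})$, I obtain the strict inequality $S_L(\mathfrak{C})<\tau(\mathfrak{C})$.

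Now I split into cases. If $\mathfrak{C}\sim R=2F+2E+2S$, then $A_{Y,\Delta_Y}(\mathfrak{C})=\tau(\mathfrak{C})=1/2$, and the strict bound above already yields $S_L(\mathfrak{C})<1/2=A_{Y,\Delta_Y}(\mathfrak{C})$. Otherwise, since $R$ is smooth and irreducible, $\mathfrak{C}$ is not a component of $R$, so $A_{Y,\Delta_Y}(\mathfrak{C})=1$ and it suffices to show $\tau(\mathfrak{C})\leq 1$. The inequality $\tau(\mathfrak{C})\leq 1$ fails only when $a=b=0$ and $c\leq 1$. The linear system $|cS|$ on $Y$ is the pullback of $|c f_{\mathbb{F}_1}|$ along $\pi_2$, and every element of the latter is a sum of $c$ fibers of $\psi$; for $c\geq 2$ this forces reducibility, so the only remaining prime class is $\mathfrak{C}\sim S$, a fiber of $\psi$.

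For this final case I would compute $S_L(S)$ directly using the Zariski decomposition of $L-uS=F+E+(2-u)S$. It is nef on $[0,1]$ with $(L-uS)^3=3(3-2u)$, and for $u\in[1,2]$ the negative part is $(u-1)E$ and the positive part $F+(2-u)E+(2-u)S$ has volume $3(2-u)^2$. Integrating gives
\[
S_L(S)=\frac{1}{9}\left(\int_0^1 3(3-2u)\,du+\int_1^2 3(2-u)^2\,du\right)=\frac{7}{9}<1=A_{Y,\Delta_Y}(S),
\]
completing the proof. The main obstacle is the structural step identifying the classes of prime divisors on $Y$; once this classification is in place the strict inequality $S_L<\tau$ handles every case uniformly, and only the single explicit integration above is needed.
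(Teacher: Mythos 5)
Your proof is correct, and it follows a genuinely different route than the paper's. The paper applies a convexity-type reduction: writing $\mathfrak{C}\sim n_FF+n_EE+n_SS$ with $n_F,n_E,n_S\geq 0$, it invokes the inequality $\beta_{Y,\Delta_Y}(\mathfrak{C})\geq\min\{\beta_{Y,\Delta_Y}(F),\beta_{Y,\Delta_Y}(E),\beta_{Y,\Delta_Y}(S)\}$, and then computes $\beta(F)=\tfrac{1}{2}$, $\beta(E)=\tfrac{4}{9}$, $\beta(S)=\tfrac{2}{9}$, i.e.\ three explicit Zariski decompositions. You instead rely on the elementary strict inequality $S_L(\mathfrak{C})<\tau(\mathfrak{C})$, which follows simply from $\mathrm{vol}$ being at most $L^3$, continuous, and vanishing at the pseudoeffective boundary; combined with $\tau(\mathfrak{C})=\min\{1/a,1/b,2/c\}$ (using that $\overline{\mathrm{Eff}}(Y)$ is simplicial on $F,E,S$) and $A_{Y,\Delta_Y}(\mathfrak{C})\in\{\tfrac{1}{2},1\}$, this disposes of every class except $\mathfrak{C}\sim S$, for which you carry out the single integration $S_L(S)=\tfrac{7}{9}<1$ correctly. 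Your version is more self-contained — it replaces the convexity lemma by a near-trivial fact and needs only one Zariski decomposition — while the paper's is more systematic once that lemma is available. Two minor nits: you assert $A_{Y,\Delta_Y}(\mathfrak{C})=\tfrac{1}{2}$ whenever $\mathfrak{C}\sim R$, which holds only for $\mathfrak{C}=R$; for a prime $\mathfrak{C}\in|R|$ with $\mathfrak{C}\neq R$ one has $A_{Y,\Delta_Y}(\mathfrak{C})=1$, but since $\tau(\mathfrak{C})=\tfrac{1}{2}<1$ the bound $S_L<\tfrac{1}{2}<A$ still gives the conclusion. Also, the remark that $|cS|$ has no irreducible members for $c\geq 2$ is superfluous, as for those classes $\tau(\mathfrak{C})=2/c\leq 1$ already settles the case.
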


\begin{proof}
Since $\mathfrak{C}\sim n_FF+n_EE+n_SS$ for some non-negative integers $n_F$, $n_E$, $n_S$ that are~not all~zero, we have
$$
\beta_{Y,\Delta_{Y}}(\mathbf{F})=\beta_{Y,\Delta_{Y}}(\mathfrak{C})\geqslant\min\big\{\beta_{Y,\Delta_{Y}}(F), \beta_{Y,\Delta_{Y}}(E), \beta_{Y,\Delta_{Y}}(S)\big\},
$$
but $\beta_{Y,\Delta_{Y}}(F)=\frac{1}{2}$, $\beta_{Y,\Delta_{Y}}(E)=\frac{4}{9}$, $\beta_{Y,\Delta_{Y}}(S)=\frac{2}{9}$.
Indeed, let us compute $\beta_{Y,\Delta_{Y}}(E)$. Take $u\in\mathbb{R}_{\geqslant 0}$.
Then $L-uE$ is pseudoeffective $\iff$ $L-uE$ is nef $\iff$ $u\in[0,1]$.
Using this, we compute
$$
\beta_{Y,\Delta_{Y}}(E)=1-S_{L}(E)=1-\frac{1}{L^3}\int\limits_{0}^{1}(L-uE)^3du=1-\frac{1}{9}\int\limits_{0}^{1}6u(1+u)du=\frac{4}{9}.
$$
Similarly, we compute $\beta_{Y,\Delta_{Y}}(F)=\frac{1}{2}$ and $\beta_{Y,\Delta_{Y}}(S)=\frac{2}{9}$.
\end{proof}

Let $R_E=R\vert_{E}$ and $\Delta_{E}=\frac{1}{2}R_E$.
Then, by Lemma~\ref{lemma:F-S}, the~curve $R_E$ is a union of two distinct fibers of the~morphisms $\pi_1\vert_{E}\colon E\to\mathbb{P}_{x_0,x_1}^1$.

\begin{lemma}
\label{lemma:3-4-E}
Suppose that $P\in E$. Then $\delta_P(Y,\Delta_Y)\geqslant 1$.
Moreover, if $\mathfrak{C}\subset E$, then $\beta_{Y,\Delta_Y}(\mathbf{F})>0$.
\end{lemma}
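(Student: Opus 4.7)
The plan is to apply Abban--Zhuang with flag $E$. By Lemma \ref{lemma:3-4-surfaces}, $A_{Y, \Delta_Y}(E)/S_L(E) = 9/5 > 1$, so both assertions of the lemma reduce to controlling the induced invariant $\delta_P(E, \Delta_E; W^E_{\bullet, \bullet})$: the first part requires $\geq 1$, and the second (where $\mathbf{F}$ has center contained in $E$) requires strict inequality.

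On $E \cong \mathbb{P}^1 \times \mathbb{P}^1$, let $\ell_1 = F|_E$ and $\ell_2 = S|_E$. Using the intersection numbers $F \cdot E^2 = -1$ and $S \cdot E^2 = 0$, one finds $E|_E \sim -\ell_2$, whence $L|_E \sim \ell_1 + \ell_2$ and $(L - uE)|_E \sim \ell_1 + (1+u)\ell_2$ for $u \in [0, 1]$, nef throughout the pseudoeffective range. By Lemma \ref{lemma:F-S}(i), $\Delta_E = \frac{1}{2}(\ell_1' + \ell_1'')$ is half the sum of two disjoint members of $|\ell_1|$.

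I would take $C = S \cap E \in |\ell_2|$ as the flag curve through $P$; since $C$ belongs to the ruling opposite to $R_E$, it never lies in $R_E$. A direct integration (no Zariski decomposition required at this level) yields $S_L(W^E_{\bullet, \bullet}; C) = 7/9$ and $S_L(W^{E, C}_{\bullet, \bullet, \bullet}; P) = 1/2$, so the Abban--Zhuang inequality gives
$$\delta_P\big(E, \Delta_E; W^E_{\bullet, \bullet}\big) \geq \min\left\{\frac{1 - \mathrm{ord}_P(\Delta_E|_C)}{1/2}, \frac{9}{7}\right\}.$$
If $P \notin R_E$, the minimum equals $9/7 > 1$, proving both assertions with room to spare.

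The main obstacle is the sharp case $P \in R_E$: then $\mathrm{ord}_P(\Delta_E|_C) = \frac{1}{2}$ and the first ratio collapses to exactly $1$, so strict positivity fails. To resolve this, I would blow up $\alpha\colon \widetilde{E} \to E$ at $P$ and apply Abban--Zhuang with respect to the exceptional curve $\mathbf{e}$. The Zariski decomposition of $\alpha^*((L-uE)|_E) - v\mathbf{e}$ has three pieces and yields $S_L(W^E_{\bullet, \bullet}; \mathbf{e}) = 23/18 < A_{E, \Delta_E}(\mathbf{e}) = 3/2$; at a generic point of $\mathbf{e}$ the point-level bound is likewise strict. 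However, at the point $\widetilde{\ell_1'} \cap \mathbf{e}$ (where $\widetilde{\ell_1'}$ is the proper transform of the component of $R_E$ through $P$) the ratio $A/S$ degenerates once more to exactly $1$. This is the \emph{deadlock} flagged in the introduction, and clearing it requires stepping outside $E$: one performs a toric weighted blow up of $P$ on $Y$ itself, applies Abban--Zhuang to its exceptional surface, and uses toric geometry to compute the resulting Zariski decompositions---precisely the refined technique announced in the introduction of the paper, whose detailed Zariski-decomposition tables occupy Appendix~\ref{section:tables}.
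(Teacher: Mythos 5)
Your integration and intersection theory on $E\cong\mathbb{P}^1\times\mathbb{P}^1$ are correct, and the inequality $\delta_P(E,\Delta_E;W^E_{\bullet,\bullet})\geqslant 1$ you extract does prove the first assertion. However, you have misdiagnosed a deadlock where none exists, and the gap caused by that misdiagnosis propagates into an incorrect treatment of the ``Moreover'' part.

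The issue is your choice of flag curve $C=S\cap E=\mathbf{s}$, the ruling \emph{transverse} to $R_E$. The paper instead takes the other ruling $\mathbf{l}=F|_E$, i.e.\ the fiber of $\pi_1|_E$ through $P$ --- the one in the \emph{same} class as the components of $R_E$. Since, by Lemma~\ref{lemma:F-S}(i), $R_E$ is a disjoint union of two fibers of $\pi_1|_E$, if $P\in R_E$ then $\mathbf{l}$ is one of those two components, and $\Delta_E|_{\mathbf{l}}$ picks up only the other (disjoint) component, which restricts to zero. So with $\mathbf{l}$ the degenerate ratio ($=1$) occurs at the \emph{curve} level $A_{E,\Delta_E}(\mathbf{l})/S_L(W^E_{\bullet,\bullet};\mathbf{l})=\tfrac{1/2}{1/2}=1$, not at the point level, while the point-level ratio stays at $\tfrac{1}{7/9}=9/7>1$. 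With your $\mathbf{s}$ the same value $1$ appears at the point level; this is a cosmetic difference, not a sharpened obstacle. In neither case does one need to blow up $P$, and the toric weighted blow-up machinery you invoke is used in the paper only in Sections~\ref{subsection:3-4-step-2} and~\ref{subsection:3-4-step-3} for the cases ($\mathbb{D}_4$) and ($\mathbb{A}_3$), which concern points $P\notin E$; it plays no role in Lemma~\ref{lemma:3-4-E}.

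For the ``Moreover'' part you also leave two things unaddressed. First, when $\mathfrak C=P$, the conclusion $\beta_{Y,\Delta_Y}(\mathbf{F})>0$ does not require $\delta_P>1$: the Abban--Zhuang equality/rigidity statement (used as in the proofs of Lemmas~\ref{lemma:3-4-F} and~\ref{lemma:3-4-S}) says that if equality held, one would have $\delta_P(Y,\Delta_Y)=A_{Y,\Delta_Y}(E)/S_L(E)=9/5$, contradicting $\delta_P=1$. This argument is available to you even with $C=\mathbf{s}$, so the alleged ``deadlock'' dissolves immediately. Second, you do not treat the case where $\mathfrak C$ is a curve in $E$; the paper's contradiction argument there requires comparing $S_L(W^E_{\bullet,\bullet};\mathfrak C)$ against both $S_L(W^E_{\bullet,\bullet};\mathbf{l})=\tfrac12$ and $S_L(W^E_{\bullet,\bullet};\mathbf{s})=\tfrac79$, and against $\mathrm{ord}_{\mathfrak C}(\Delta_E)$ when $\mathfrak C$ is a component of $R_E$. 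That step is missing from your proposal.
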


\begin{proof}
Take $u\in\mathbb{R}_{\geqslant 0}$. From the~proof of Lemma~\ref{lemma:3-4-surfaces}, we know that
\begin{center}
$L-uE$ is pseudoeffective $\iff$ $L-uE$ is nef $\iff$ $u\in[0,1]$.
\end{center}

Let $\mathbf{l}$ and $\mathbf{s}$ be~some fibers of the~morphisms $\pi_1\vert_{E}\colon E\to\mathbb{P}^1_{x_0,x_1}$ and $\psi\vert_{E}\colon E\to\mathbb{P}^1_{y_0,y_1}$, respectively.
Choose $\mathbf{l}$ and $\mathbf{s}$ such that $P\in\mathbf{l}\cap\mathbf{s}$.
Take $v\in\mathbb{R}_{\geqslant 0}$. Then $(L-uE)\vert_{E}-v\mathbf{l}\sim_{\mathbb{R}}(1-v)\mathbf{l}+(1+u)\mathbf{s}$,
and this divisor is pseudoeffective $\iff$ it is nef $\iff$ $v\in[0,1]$.
Now, following \cite{AbbanZhuang,Book,Fujita2021}, we set
$$
S_L\big(W^{E}_{\bullet,\bullet};\mathbf{l}\big)=\frac{3}{L^3}\int\limits_0^1\int\limits_0^1\big((L-uE)\big\vert_{E}-v\mathbf{l}\big)^2dvdu
$$
and
$$
S_L\big(W_{\bullet, \bullet,\bullet}^{E,\mathbf{l}};P\big)=\frac{3}{L^3}\int\limits_0^1\int\limits_0^{1}\Big(\big(L-uE)\big\vert_{E}-v\mathbf{l}\big)\cdot\mathbf{l}\Big)^2dvdu.
$$
Integrating, we get $S_L(W^{E}_{\bullet,\bullet};\mathbf{l})=\frac{1}{2}$ and $S_L(W_{\bullet, \bullet,\bullet}^{E,\mathbf{l}};P)=\frac{7}{9}$.

If $\mathbf{l}$ is not an irreducible component of the~curve $R_E$, then it follows from \cite{AbbanZhuang,Book,Fujita2021} that
$$
\frac{A_{Y,\Delta_Y}(\mathbf{F})}{S_{L}(\mathbf{F})}\geqslant\delta_P(Y,\Delta_Y)\geqslant\min\left\{\frac{1}{S_L(W_{\bullet, \bullet,\bullet}^{E,\mathbf{l}}; P)}, \frac{1}{S_L(W_{\bullet,\bullet}^E;\mathbf{l})},\frac{1}{S_{L}(E)}\right\}=\frac{9}{7},
$$
because we computed $S_{L}(E)=\frac{5}{9}$ in the~proof of Lemma~\ref{lemma:3-4-surfaces}.
Similarly, if $\mathbf{l}\subset\mathrm{Supp}(R_E)$, then
$$
\frac{A_{Y,\Delta_Y}(\mathbf{F})}{S_{L}(\mathbf{F})}\geqslant
\delta_P(Y,\Delta_Y)\geqslant\min\left\{\frac{1}{S_L(W_{\bullet, \bullet,\bullet}^{E,\mathbf{l}}; P)},
\frac{1-\mathrm{ord}_{\mathbf{l}}(\Delta_E)}{S_L(W_{\bullet,\bullet}^E;\mathbf{f})},\frac{1}{S_{L}(E)}\right\}=1.
$$
Moreover, if $\mathfrak{C}=P$, then it follows from \cite{AbbanZhuang,Book,Fujita2021} that $\beta_{Y,\Delta_Y}(\mathbf{F})>0$.

Thus, we see that $\delta_P(Y,\Delta_Y)\geqslant 1$. In particular, we have $\beta_{Y,\Delta_Y}(\mathbf{F})\geqslant 0$.

To complete the~proof, we may assume that $\mathfrak{C}$ is a curve in $E$.
Let us show that $\beta_{Y,\Delta_Y}(\mathbf{F})>0$. Suppose that $\beta_{Y,\Delta_Y}(\mathbf{F})=0$.
Let us seek for a contradiction. As above, we let
$$
S_L\big(W^{E}_{\bullet,\bullet};\mathfrak{C}\big)=\frac{3}{L^3}\int\limits_0^1\int\limits_0^\infty\mathrm{vol}\big(L\vert_{E}-v\mathfrak{C}\big)dvdu.
$$
Then it follows from \cite{AbbanZhuang,Book,Fujita2021} that
$$
1=\frac{A_{Y,\Delta_Y}(\mathbf{F})}{S_{L}(\mathbf{F})}>\frac{1-\mathrm{ord}_{\mathfrak{C}}(\Delta_E)}{S_L(W_{\bullet,\bullet}^E;\mathfrak{C})}.
$$
If $\mathfrak{C}$ is an irreducible component of the~curve $R_E$, then $\mathfrak{C}=\mathbf{l}$,
so $S_L(W_{\bullet,\bullet}^E;\mathbf{l})=\frac{1}{2}$ and $\mathrm{ord}_{\mathbf{l}}(\Delta_E)=\frac{1}{2}$,
which gives us a contradiction.
Thus, we have $\mathrm{ord}_{\mathfrak{C}}(\Delta_E)=0$, which gives $S_L(W_{\bullet,\bullet}^E;\mathfrak{C})>1$. But
$$
S_L\big(W_{\bullet,\bullet}^E;\mathfrak{C}\big)\leqslant\mathrm{min}\Big\{S_L\big(W_{\bullet,\bullet}^E;\mathbf{l}\big),S_L\big(W_{\bullet,\bullet}^E;\mathbf{s}\big)\Big\},
$$
because $|\mathfrak{C}-\mathbf{l}|\ne\varnothing$ or $|\mathfrak{C}-\mathbf{s}|\ne\varnothing$. Hence, we conclude that $S_L(W_{\bullet,\bullet}^E;\mathbf{s})>1$.

Let us compute $S_L(W_{\bullet,\bullet}^E;\mathbf{s})$. For $v\in\mathbb{R}_{\geqslant 0}$, we have $(L-uE)\vert_{E}-v\mathbf{s}\sim_{\mathbb{R}}\mathbf{l}+(1+u-v)\mathbf{s}$,
and this divisor is pseudoeffective $\iff$ it is nef $\iff$ $v\in[0,1+u]$. Hence, we have
$$
1<S_L\big(W^{E}_{\bullet,\bullet};\mathbf{s}\big)=\frac{3}{L^3}\int\limits_0^1\int\limits_0^{1-u}\big(\mathbf{l}+(1+u-v)\mathbf{s}\big)^2dvdu=
\frac{3}{L^3}\int\limits_0^1\int\limits_0^{1+u}2(1+u-v)dvdu=\frac{7}{9},
$$
which is a contradiction.
\end{proof}

Let $R_F=R\vert_{F}$ and $\Delta_{F}=\frac{1}{2}R_F$. Set  $Z=S\cdot F$. Then $Z=\{x_1=0,y_1=0\}\subset Y$.

\begin{lemma}
\label{lemma:3-4-F}
Suppose that $R_F$ is smooth. Then $\delta_P(Y,\Delta_Y)\geqslant 1$.
If  $\mathfrak{C}=P$, then $\beta_{Y,\Delta_Y}(\mathbf{F})>0$.
\end{lemma}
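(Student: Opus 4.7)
My approach is to apply the Abban--Zhuang inequality on the surface $F\cong\mathbb{F}_1$ using a flag through $P$, following the template of Lemma~\ref{lemma:3-4-E} and, for the delicate subcase, that of Lemma~\ref{lemma:2-18-OK-case}. Let $\mathbf{l}=S\cdot F=Z$ be the fiber of $\psi\vert_F\colon F\to\mathbb{P}^1_{y_0,y_1}$ through $P$; it has class $f$ on $F\cong\mathbb{F}_1$. Since $R_F$ has class $2e+2f$ and is smooth, the intersection number $R_F\cdot\mathbf{l}=2$ forces $\mathbf{l}$ not to be a component of $R_F$ (a complementary component of class $2e+f$ would still meet $\mathbf{l}$, contradicting smoothness). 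Because $F\vert_F\equiv 0$, the restriction $(L-uF)\vert_F=e+2f$ is independent of $u\in[0,1]$.

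I then compute the Zariski decomposition of $(e+2f)-v\mathbf{l}$ on $\mathbb{F}_1$: for $v\in[0,1]$ the divisor $e+(2-v)f$ is already nef, while for $v\in[1,2]$ the negative part is $(v-1)e$ and the positive part is $(2-v)(e+f)$. Integrating yields $S_L(W^F_{\bullet,\bullet};\mathbf{l})=\tfrac{7}{9}$ and $S_L(W^{F,\mathbf{l}}_{\bullet,\bullet,\bullet};P)=\tfrac{4}{9}$, the correction term $F_P$ vanishing since the negative part is supported on $e$ and $P\notin E$. Combined with $S_L(F)=\tfrac{1}{2}$ from Lemma~\ref{lemma:3-4-surfaces}, the Abban--Zhuang inequality gives
$$
\delta_P(Y,\Delta_Y)\geqslant\min\left\{\frac{A_{Y,\Delta_Y}(F)}{S_L(F)},\frac{A_{F,\Delta_F}(\mathbf{l})}{S_L(W^F_{\bullet,\bullet};\mathbf{l})},\frac{A_{\mathbf{l},\Delta_\mathbf{l}}(P)}{S_L(W^{F,\mathbf{l}}_{\bullet,\bullet,\bullet};P)}\right\}=\min\left\{2,\frac{9}{7},\frac{9}{4}\bigl(1-\mathrm{ord}_P(\Delta_\mathbf{l})\bigr)\right\}.
$$
Since $R_F$ is smooth with $R_F\cdot\mathbf{l}=2$, the local intersection $(R_F\cdot\mathbf{l})_P$ is $0$, $1$, or $2$. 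In the first two cases $\mathrm{ord}_P(\Delta_\mathbf{l})\leqslant\tfrac{1}{2}$, the third ratio is at least $\tfrac{9}{8}$, and so $\delta_P>1$ strictly; in particular $\beta_{Y,\Delta_Y}(\mathbf{F})>0$ whenever $\mathfrak{C}=P$.

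The only remaining subcase is when $R_F$ is tangent to $\mathbf{l}$ at $P$, in which $(R_F\cdot\mathbf{l})_P=2$ and the third ratio above degenerates to $0$. To treat it I would replace $\mathbf{l}$ by the exceptional curve $\mathbf{z}$ of the $(1,2)$-weighted blow up $\rho\colon\widehat{F}\to F$ of $P$ adapted to the common tangent direction of $R_F$ and $\mathbf{l}$, constructed exactly as in Lemma~\ref{lemma:2-18-OK-case}: blow up $P$, then blow up the common tangent point $R_{\widetilde{F}}\cap\mathbf{e}=\widetilde{\mathbf{l}}\cap\mathbf{e}$, then contract the proper transform of $\mathbf{e}$ (a $(-2)$-curve). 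The surface $\widehat{F}$ acquires a single $A_1$ singularity $Q\in\mathbf{z}$ with $\mathbf{z}^2=-\tfrac{1}{2}$, $A_F(\mathbf{z})=3$, and the weighted multiplicity of the smooth curve $R_F$ is $\mathrm{ord}_\mathbf{z}(R_F)=2$, so $A_{F,\Delta_F}(\mathbf{z})=2$. The decisive geometric fact is that after this weighted blow up the three special points $\widehat{\mathbf{l}}\cap\mathbf{z}$, $R_{\widehat{F}}\cap\mathbf{z}$ and $Q$ on $\mathbf{z}$ are pairwise distinct, so the boundary $\Delta_\mathbf{z}=\tfrac{1}{2}Q+\Delta_{\widehat{F}}\vert_\mathbf{z}$ has coefficient at most $\tfrac{1}{2}$ at every point of $\mathbf{z}$. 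I would then compute the Zariski decomposition of $\rho^*(e+2f)-v\mathbf{z}$ in successive regimes as $v$ grows (nef, then $\widehat{\mathbf{l}}$ entering the negative part, then $\widehat{e}$ entering), integrate to obtain $S_L(W^F_{\bullet,\bullet};\mathbf{z})$ and $S_L(W^{\widehat{F},\mathbf{z}}_{\bullet,\bullet,\bullet};O)$ together with correction terms $F_O$ at each of the three special points, and apply the Abban--Zhuang inequality for singular log pairs to conclude that $\delta_P>1$.

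The chief difficulty is this final Zariski computation on the singular weak del Pezzo $\widehat{F}$. I expect the extremal ratio to occur at $O=\widehat{\mathbf{l}}\cap\mathbf{z}$, where the negative part along $\widehat{\mathbf{l}}$ contributes a nonzero correction term $F_O$ (analogous to the $O\in\widehat{\mathbf{s}}$ case of Lemma~\ref{lemma:2-18-OK-case}); checking that even this extremal ratio still exceeds $1$ will require careful bookkeeping of the pseudoeffective threshold and its dependence on $u\in[0,1]$.
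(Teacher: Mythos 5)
You take a genuinely different route from the paper, and it has a gap. The paper's proof sidesteps all explicit flag computations on $F$: since $F$ is a fiber of $\pi_1$, the normal bundle $F|_F$ is trivial, so the $u$-integral in $S_L(W^F_{\bullet,\bullet};\mathbf{f})$ collapses; moreover $L|_F=-(K_F+\Delta_F)$ by adjunction and $(K_F+\Delta_F)^2=3=L^3/3$, so $S_L(W^F_{\bullet,\bullet};\mathbf{f})$ is \emph{exactly} $S_{F,\Delta_F}(\mathbf{f})$ for every prime divisor $\mathbf{f}$ over $F$. The inequality $S_{F,\Delta_F}(\mathbf{f})\leqslant A_{F,\Delta_F}(\mathbf{f})$ then follows once one knows $\delta(F,\Delta_F)\geqslant 1$, which the paper extracts from the fact that the double cover of $F$ branched over $R_F$ is a smooth del Pezzo surface of degree $6$ (hence K-polystable, hence so is the log pair $(F,\Delta_F)$). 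That single observation disposes of every flag at once and in particular erases the tangency subcase that bogs you down.

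Your own computations for the flag $(P,\mathbf{l},F,Y)$ with $\mathbf{l}=Z=S\cdot F$ in the transversal case are right: $S_L(W^F_{\bullet,\bullet};\mathbf{l})=\tfrac{7}{9}$, $S_L(W^{F,\mathbf{l}}_{\bullet,\bullet,\bullet};P)=\tfrac{4}{9}$, the correction term $F_P$ vanishes since $P\notin E\cap F$, and the resulting minimum exceeds $1$ when $(R_F\cdot\mathbf{l})_P\leqslant 1$. But the tangential subcase $(R_F\cdot\mathbf{l})_P=2$ is not actually proved: you describe the $(1,2)$-weighted blow up $\rho\colon\widehat{F}\to F$, you state the expected invariants $A_{F,\Delta_F}(\mathbf{z})=2$, $\mathbf{z}^2=-\tfrac12$ and the separation of the three marked points on $\mathbf{z}$, but then you only \emph{announce} the Zariski decompositions of $\rho^*(e+2f)-v\mathbf{z}$, the integrals $S_L(W^F_{\bullet,\bullet};\mathbf{z})$ and $S_L(W^{\widehat{F},\mathbf{z}}_{\bullet,\bullet,\bullet};O)$, and the correction terms $F_O$, and you explicitly flag the ``final Zariski computation'' as the chief difficulty. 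Your plan is consistent with case ($\diamondsuit$) of Lemma~\ref{lemma:2-18-OK-case} and is very likely to close, but until those integrals are carried out and each ratio is verified to exceed $1$, the lemma is not proved; the paper's K-polystability argument both avoids this case split and is considerably shorter.
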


\begin{proof}
We recall that $F=\{x_1=0\}\subset Y$. Let us identify $F=\mathbb{F}_1$ with coordinates $[y_0:y_1;z_0:z_1]$.
Let $\upsilon\colon F\to\mathbb{P}^2$ be the~blow up $[y_0:y_1;z_0:z_1]\mapsto[y_0z_0:y_1z_0:z_1]$,
and let $\mathbf{e}$ be its exceptional~curve.
Then $R_F\cap\mathbf{e}=\varnothing$, and $\upsilon(R_F)$ is a smooth conic in $\mathbb{P}^2$.
Moreover, we have
$$
R_F\sim 2\big(Z+\mathbf{e}\big),
$$
and $Z$ is the~fiber of the~natural projection $F\to\mathbb{P}_{y_0,y_1}^1$ over the~point $[0:1]$.

Take $u\in\mathbb{R}_{\geqslant 0}$.
Then $L-uF$ is pseudoeffective $\iff$ $L-uF$ is nef $\iff$  $u\leqslant 1$.
Set
$$
\delta_{P}\big(F,\Delta_F;W^{F}_{\bullet,\bullet}\big)=\inf_{\substack{\mathbf{f}/F,\\ P\in C_F(\mathbf{f})}}\frac{A_{F,\Delta_F}(\mathbf{f})}{S_L(W^{F}_{\bullet,\bullet};\mathbf{f})},
$$
where
$$
S_L\big(W^{F}_{\bullet,\bullet};\mathbf{f}\big)=\frac{3}{L^3}\int\limits_{0}^{1}\int\limits_0^\infty \mathrm{vol}\big((L-uF)\big\vert_{F}-v\mathbf{f}\big)dvdu,
$$
and the~infimum is taken over all prime divisors $\mathbf{f}$ over the~surface $F$ whose centers on $F$ contain~$P$.
Then  it follows from \cite{AbbanZhuang,Book,Fujita2021} that
$$
\frac{A_{Y,\Delta_Y}(\mathbf{F})}{S_{L}(\mathbf{F})}\geqslant\delta_P(Y,\Delta_{Y})\geqslant\min\left\{\delta_{P}\big(F,\Delta_F;W^{F}_{\bullet,\bullet}\big),\frac{1}{S_{L}(F)}\right\}.
$$
Further, if both these inequalities are equalities and $\mathfrak{C}=P$,
then \cite{AbbanZhuang,Book,Fujita2021} gives $\delta_P(Y,\Delta_{Y})=\frac{1}{S_{L}(F)}$.
Moreover, we know from the~proof of Lemma~\ref{lemma:3-4-surfaces} that $S_{L}(F)=\frac{1}{2}$.
Hence, to complete the~proof, it is enough to show that $\delta_{P}(F,\Delta_F;W^{F}_{\bullet,\bullet})\geqslant 1$.
Let us do this.

Note that $(F,\Delta_F)$ is a log Fano pair.
Recall from \cite{Book} that its $\delta$-invariant is the~number
$$
\delta(F,\Delta_F)=\inf_{\mathbf{f}/F}\frac{A_{F,\Delta_F}(\mathbf{f})}{S_{F,\Delta_F}(\mathbf{f})},
$$
where
$$
S_{F,\Delta_F}\big(\mathbf{f}\big)=\frac{1}{(K_F+\Delta_F)^2}\int\limits_0^\infty \mathrm{vol}\big(-(K_F+\Delta_F)-v\mathbf{f}\big)dv,
$$
and the~infimum is taken over all prime divisors $\mathbf{f}$ over the~surface $F$.
We claim that $\delta(F,\Delta_F)\geqslant 1$.
Indeed, either one can check this explicitly similar to what is done in \cite[\S~2]{Book},
or one can use the~fact that the~double cover of the~surface $F$ branched over the~curve $R_F$ is a smooth del Pezzo of degree~$6$,
which is known to be K-polystable, so $(F,\Delta_F)$ is also K-polystable \cite{Fujita2019b},
which gives~$\delta(F,\Delta_F)\geqslant 1$.
Then,~using the~idea of the~proof of \cite[Nemuro Lemma]{CheltsovFujitaKishimotoOkada}, we get
\begin{multline*}
S_L\big(W^{F}_{\bullet,\bullet};\mathbf{f}\big)=
\frac{3}{L^3}\int\limits_{0}^{1}\int\limits_0^\infty \mathrm{vol}\big((L-uF)\big\vert_{F}-v\mathbf{f}\big)dvdu=\frac{3}{L^3}\int\limits_{0}^{1}\int\limits_0^\infty \mathrm{vol}\big(L\big\vert_{F}-v\mathbf{f}\big)dvdu=\\
=\frac{3}{L^3}\int\limits_0^\infty \mathrm{vol}\big(L\big\vert_{F}-v\mathbf{f}\big)dv=\frac{1}{(K_F+\Delta_F)^2}\int\limits_0^\infty\mathrm{vol}\big(-(K_F+\Delta_F)-v\mathbf{f}\big)dvdu\leqslant A_{F,\Delta_F}(\mathbf{f})
\end{multline*}
for every divisor $\mathbf{f}$ over the~surface $F$. This exactly means that  $\delta_{P}(F,\Delta_F;W^{F}_{\bullet,\bullet})\geqslant 1$.
\end{proof}

Let $R_S=R\vert_{S}$ and $\Delta_{S}=\frac{1}{2}R_S$. Recall that $S=\{y_1=0\}$ and $Z=\{x_1=0,y_1=0\}\subset S$. Set
$$
C=\{y_1=0,az_1=bz_0\}\subset Y.
$$
Then $Z$ and $C$ are rulings of the~surface $S\cong\mathbb{P}^1\times\mathbb{P}^1$ such that $P=Z\cap C$.

\begin{lemma}
\label{lemma:3-4-S}
Suppose that $P\not\in E$. Then
\begin{enumerate}
\item[$(\mathrm{1})$] if $\mathfrak{C}\subset S$ and $\mathfrak{C}$ is a curve, then $\beta_{Y,\Delta_Y}(\mathbf{F})>0$,
\item[$(\mathrm{2})$] if $P\not\in R$, then $\delta_P(Y,\Delta_Y)>1$,
\item[$(\mathrm{3})$] if $P\in R$ and $R_S$ is smooth at $P$, then $\delta_P(Y,\Delta_Y)\geqslant 1$,
\item[$(\mathrm{4})$] if $P\in R$, $R_S$ is smooth at $P$, and $\mathfrak{C}=P$, then $\beta_{Y,\Delta_Y}(\mathbf{F})>0$,
\item[$(\mathrm{5})$] if $P\in R$, $R_S$ is smooth at $P$, and $Z\not\subset\mathrm{Supp}(R_S)$, then $\delta_P(Y,\Delta_Y)>1$.
\end{enumerate}
\end{lemma}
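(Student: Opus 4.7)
My strategy is to apply Abban--Zhuang theory with the flag $P\in\Gamma\subset S\subset Y$, where $\Gamma\in\{Z,C\}$ is a ruling of $S\cong\mathbb{P}^1\times\mathbb{P}^1$ through $P$. The preparatory step is the Zariski decomposition of $L-uS$ on $Y$: from $L\sim F+E+2S$ together with $F\cdot E^2=-1$, $F\cdot E\cdot S=1$ (all other triples vanishing), one checks that $L-uS$ is nef for $u\in[0,1]$ and pseudo-effective for $u\in[0,2]$, with positive part $P(u)=F+(2-u)(E+S)$ and negative part $(u-1)E$ for $u\in[1,2]$. Writing $\mathbf{f}_1=F|_S$ and $\mathbf{f}_2=E|_S$ for the two rulings of $S$, the restriction is $\mathbf{f}_1+\mathbf{f}_2$ on $[0,1]$ and $\mathbf{f}_1+(2-u)\mathbf{f}_2$ on $[1,2]$. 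Routine integration then produces $S_L(S)=\tfrac{7}{9}$, $S_L(W^S_{\bullet,\bullet};Z)=\tfrac{1}{2}$, $S_L(W^S_{\bullet,\bullet};C)=\tfrac{4}{9}$, together with $S_L(W^{S,Z}_{\bullet,\bullet,\bullet};P)=\tfrac{4}{9}$ and $S_L(W^{S,C}_{\bullet,\bullet,\bullet};P)=\tfrac{1}{2}$. In particular $\frac{A_{Y,\Delta_Y}(S)}{S_L(S)}=\tfrac{9}{7}>1$, so $S$ is never the bottleneck in the Abban--Zhuang estimate.

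For part~(1) I argue by contradiction: if $\beta_{Y,\Delta_Y}(\mathbf{F})\leqslant 0$ with $C_Y(\mathbf{F})=\mathfrak{C}\subset S$, the bound above forces $S_L(W^S_{\bullet,\bullet};\mathfrak{C})\geqslant 1-\mathrm{ord}_{\mathfrak{C}}(\Delta_S)$, and Lemma~\ref{lemma:F-S}(ii) restricts that order to $\{0,\tfrac{1}{2}\}$. Monotonicity $S_L(W^S_{\bullet,\bullet};\mathfrak{C})\leqslant\min\{S_L(W^S_{\bullet,\bullet};Z),S_L(W^S_{\bullet,\bullet};C)\}\leqslant\tfrac{1}{2}$, valid whenever both $|\mathfrak{C}-Z|$ and $|\mathfrak{C}-C|$ are non-empty, excludes every class except the two rulings through $P$, and those cases collapse by a refined point-flag argument, using Lemma~\ref{lemma:F-S}(v)--(vi) to produce additional singularities of $R_F$ or $R_S$ that contradict smoothness of $R$.

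Part~(2) is immediate: $P\notin R$ gives $\mathrm{ord}_P(\Delta_\Gamma)=0$ for either $\Gamma$, so Abban--Zhuang yields $\delta_P\geqslant\min\{9/4,\,2,\,9/7\}=\tfrac{9}{7}>1$. Parts~(3) and~(5) are handled by picking $\Gamma\in\{Z,C\}$ transverse to the tangent of $R_S$ at $P$: when $\Gamma\not\subset R_S$ the ruling ratio $\frac{A_S(\Gamma)}{S_L(W^S_{\bullet,\bullet};\Gamma)}$ is $2$ or $\tfrac{9}{4}$, while smoothness of $R_S$ at $P$ gives $\mathrm{ord}_P(\Delta_\Gamma)\leqslant\tfrac{1}{2}$, so the point ratio is $\geqslant 1$. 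This yields $\delta_P\geqslant 1$ in general, and the hypothesis $Z\not\subset R_S$ in~(5) upgrades the $Z$-flag ratio to strictly $>1$. For~(4), the center $\mathfrak{C}=P$ forces strict inequality in the Abban--Zhuang estimate, since equality would require $\mathbf{F}$ to be a weighted combination of the flag divisors, each of whose center has positive dimension.

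The main obstacle is the deadlock in which $R_S$ contains a ruling through $P$, or is tangent to both rulings at $P$: in these situations the two natural flags both yield ratio exactly~$1$, and neither gives strict inequality on its own. Following the strategy announced in the introduction, I would resolve this by performing a toric weighted blow-up of $S$ at $P$ with weights $(1,2)$ in the direction of the tangent of $R_S$, exactly as in Lemma~\ref{lemma:2-18-OK-case}, and running a second round of Abban--Zhuang on the resulting exceptional curve, reading off the necessary Zariski decompositions on the weighted blow-up of $Y$ from the tables compiled in Appendix~\ref{section:tables}.
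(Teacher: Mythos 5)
Your overall strategy matches the paper: apply Abban--Zhuang along the flag $P\in\Gamma\subset S\subset Y$ with $\Gamma\in\{Z,C\}$, compute $S_L(S)=\tfrac{7}{9}$, $S_L(W^S_{\bullet,\bullet};Z)=\tfrac12$, $S_L(W^S_{\bullet,\bullet};C)=\tfrac49$, $S_L(W^{S,Z}_{\bullet,\bullet,\bullet};P)=\tfrac49$, $S_L(W^{S,C}_{\bullet,\bullet,\bullet};P)=\tfrac12$, and handle the residual deadlock in part (5) via a $(1,2)$ weighted blow-up of $S$ at $P$ followed by a second Abban--Zhuang pass over the exceptional curve. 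Parts (2), (3) and (4) are correctly reduced to these numbers, and your explanation of (4) (equality in the chain would force $\frac{A(\mathbf F)}{S_L(\mathbf F)}=\frac{1}{S_L(S)}=\frac97$) is essentially the paper's argument.

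There are, however, several concrete slips in the details. First, for part (5) you claim that the hypothesis $Z\not\subset\mathrm{Supp}(R_S)$ ``upgrades the $Z$-flag ratio to strictly $>1$.'' This is false in the very case the lemma must handle: when $R_S$ is tangent to $Z$ at $P$, one has $\mathrm{ord}_P(\Delta_S|_Z)=\tfrac12\,(R_S\cdot Z)_P\geqslant 1$, so $A_{Z,\Delta_Z}(P)=1-\mathrm{ord}_P(\Delta_S|_Z)\leqslant 0$, and the $Z$-flag gives no useful lower bound at all; the transverse ruling $C$ gives ratio exactly $1$, and the $(1,2)$ blow-up is needed precisely to break this tie, not to handle a situation where you've already established $>1$ by other means. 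Second, you describe the deadlock as ``$R_S$ is tangent to both rulings at $P$''; this cannot occur when $R_S$ is smooth at $P$, since a smooth curve has a single tangent direction. The actual deadlock is tangency to $Z$ alone (with $C$ transverse). Third, your suggestion to read the Zariski decompositions for this $(1,2)$ blow-up off the tables in Appendix~\ref{section:tables} is misdirected: those tables serve the three-dimensional weighted blow-ups $\lambda\colon W_0\to Y$ used to exclude the cases $(\mathbb{A}_3)$ and $(\mathbb{D}_4)$ in Sections~\ref{subsection:3-4-step-2}--\ref{subsection:3-4-step-3}; the Zariski decompositions of $\rho^*(P(u)|_S)-v\mathbf{e}$ in part (5) are two-dimensional and are computed directly inside the lemma's proof, exactly as in Lemma~\ref{lemma:2-18-OK-case}. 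Finally, your part (1) invokes ``a refined point-flag argument, using Lemma~\ref{lemma:F-S}(v)--(vi) to produce additional singularities of $R_F$ or $R_S$'' for the irreducible components of $R_S$; this is not needed. Since $R_S$ is reduced (Lemma~\ref{lemma:F-S}(ii)) one has $\mathrm{ord}_{\mathfrak C}(\Delta_S)\leqslant\tfrac12$, so together with $S_L(W^S_{\bullet,\bullet};\mathfrak C)\leqslant\tfrac12$ the non-$S$ term in the Abban--Zhuang bound is $\geqslant 1$, and the refined equality statement of that theory (equality forces $\frac{A(\mathbf F)}{S_L(\mathbf F)}=\frac{1}{S_L(S)}=\frac97>1$) finishes directly without any appeal to Lemma~\ref{lemma:F-S}(v)--(vi).
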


\begin{proof}
Let $u$ be a non-negative real number. Then $L-uS$ is pseudoeffective if and only if $u\leqslant 2$.
For~$u\in[0,2]$, let $P(u)$ be the~positive part of the~Zariski decomposition of the~divisor~$L-uS$,
and let $N(u)$ be the~negative part of the~Zariski decomposition of the~divisor~$L-uS$.  Then
$$
P(u)\sim_{\mathbb{R}}\left\{\aligned
&F+E+(2-u)S\ \text{ for } 0\leqslant u\leqslant 1, \\
&F+(2-u)(E+S)\ \text{ for } 1\leqslant u\leqslant 2,
\endaligned
\right.
$$
and
$$
N(u)= \left\{\aligned
&0\ \text{ for } 0\leqslant u\leqslant 1, \\
&(u-1)E\ \text{ for } 1\leqslant u\leqslant 2.
\endaligned
\right.
$$
Observe that $R_S\sim 2(Z+C)$ and
$$
P(u)\big\vert_{S}\sim_{\mathbb{R}}\left\{\aligned
&Z+C\ \text{ for } 0\leqslant u\leqslant 1, \\
&Z+(2-u)C\ \text{ for } 1\leqslant u\leqslant 2.
\endaligned
\right.
$$

Let $G$ be an~irreducible curve in $S$ that passes though $P$. Take $v\in\mathbb{R}_{\geqslant 0}$. Set
$$
t(u)=\inf\Big\{v\in \mathbb R_{\geqslant 0} \ \big|\ \text{the~divisor $P(u)\big|_{S}-vG$ is pseudoeffective}\Big\}.
$$
Since $S\cong\mathbb{P}^1\times\mathbb{P}^1$, the~divisor $P(u)|_S-vG$ is nef $\iff$ $v\leqslant t(u)$.
Set
$$
S_L\big(W^S_{\bullet,\bullet};G\big)=\frac{3}{L^3}\int\limits_{0}^{2}\int\limits_0^{t(u)}\big(P(u)\big|_{S}-vG\big)^2dvdu
$$
and
$$
S_L\big(W_{\bullet, \bullet,\bullet}^{S,G};P\big)=\frac{3}{L^3}\int\limits_0^2\int\limits_0^{t(u)}\Big(\big(P(u)|_S-vG\big)\cdot G\Big)^2dvdu.
$$
If $G=\mathfrak{C}$ is a curve in $S$, it follows from \cite{AbbanZhuang,Book,Fujita2021} that
$$
\frac{A_{Y,\Delta_Y}(\mathbf{F})}{S_{L}(\mathbf{F})}\geqslant
\min\left\{\frac{1-\mathrm{ord}_{\mathfrak{C}}(\Delta_S)}{S_L(W_{\bullet,\bullet}^S;G)},\frac{1}{S_{L}(S)}\right\}.
$$
Moreover, if this inequality is an equality, it further follows from \cite{AbbanZhuang,Book,Fujita2021} that
$$
\frac{A_{Y,\Delta_Y}(\mathbf{F})}{S_{L}(\mathbf{F})}=\frac{1}{S_{L}(S)}.
$$
On the~other hand,  we know from the~proof of Lemma~\ref{lemma:3-4-surfaces} that $S_{L}(S)=\frac{7}{9}$. Moreover, we have
$$
S_L\big(W^S_{\bullet,\bullet};G\big)\leqslant\min\big\{S_L\big(W^S_{\bullet,\bullet};Z\big),S_L\big(W^S_{\bullet,\bullet};C\big)\big\}.
$$
Therefore, to prove assertion ($\mathrm{1}$), it is enough to check that $S_L(W^S_{\bullet,\bullet};Z)\leqslant\frac{1}{2}$ and $S_L(W^S_{\bullet,\bullet};C)\leqslant\frac{1}{2}$.
This is not difficult. Indeed, if $G=Z$, then $t(u)=1$ for every $u\in[0,2]$, and
$$
S_L(W_{\bullet,\bullet}^S;Z)=\frac{1}{3}\int\limits_{0}^{1}\int\limits_{0}^{1}2(1-v)dudu+\frac{1}{3}\int\limits_{1}^{2}\int\limits_{0}^{1}2(1-v)(2-u)dudu=\frac{1}{2}.
$$
Similarly, if $G=C$, then
$$
t(u)=\left\{\aligned
&1\ \text{ for } 0\leqslant u\leqslant 1, \\
&2-u\ \text{ for } 1\leqslant u\leqslant 2,
\endaligned
\right.
$$
and
$$
S_L(W_{\bullet,\bullet}^S;C)=\frac{1}{3}\int\limits_{0}^{1}\int\limits_{0}^{1}2(1-v)dudu+\frac{1}{3}\int\limits_{1}^{2}\int\limits_{0}^{2-u}2(2-u-v)dudu=\frac{4}{9}.
$$
This proves ($\mathrm{1}$).

Let $G$ be one of the~curves $Z$ or $C$.
If $G\not\subset\mathrm{Supp}(R_S)$, then it follows from \cite{AbbanZhuang,Book,Fujita2021}~that
\begin{equation}
\label{equation:3-4-S-easy-case}
\frac{A_{Y,\Delta_Y}(\mathbf{F})}{S_{L}(\mathbf{F})}\geqslant
\delta_P(Y,\Delta_Y)\geqslant\min\left\{\frac{1-\mathrm{ord}_{P}(\Delta_S\vert_{G})}{S_L(W_{\bullet, \bullet,\bullet}^{S,G};P)},
\frac{1}{S_L(W_{\bullet,\bullet}^S;G)},\frac{1}{S_{L}(S)}\right\}.
\end{equation}
On the~other hand, we compute
$$
S_L(W_{\bullet, \bullet,\bullet}^{S,G};P)=\left\{\aligned
&\frac{4}{9}\ \text{if}\ G=Z, \\
&\frac{1}{2}\ \text{if}\ G=C.\\
\endaligned
\right.
$$
If $P\not\in R$, then $Z\not\subset\mathrm{Supp}(R_S)$ and $C\not\subset\mathrm{Supp}(R_S)$,
so \eqref{equation:3-4-S-easy-case} gives $\delta_P(Y,\Delta_Y)\geqslant\frac{9}{7}$.
This proves ($\mathrm{2}$).

Now, we suppose that $P\in R$ and $R_S$ is smooth at $P$.
Then $Z\not\subset\mathrm{Supp}(R_S)$ or~\mbox{$C\not\subset\mathrm{Supp}(R_S)$}.
Moreover, if $Z\not\subset\mathrm{Supp}(R_S)$ and $R_S$ intersects  $Z$ transversally at $P$,
then \eqref{equation:3-4-S-easy-case} gives $\delta_P(Y,\Delta_Y)\geqslant\frac{9}{8}$.
Therefore, to prove ($\mathrm{3}$), ($\mathrm{4}$) and ($\mathrm{5}$) we may assume that
\begin{itemize}
\item either $Z$ is an~irreducible component of the~curve $R_S$,
\item or the~curve $R_S$ is tangent to $Z$ at the~point $P$.
\end{itemize}
Then $C$ is not an~irreducible component of the~curve $R_S$,
and $R_S$ intersects $C$ transversally~at~$P$.
Hence, using \eqref{equation:3-4-S-easy-case}, we obtain $\delta_P(Y,\Delta_Y)\geqslant 1$.
This proves ($\mathrm{3}$).

We have $\beta_{Y,\Delta_Y}(\mathbf{F})\geqslant 0$.
If $\mathfrak{C}=P$ and $\beta_{Y,\Delta_Y}(\mathbf{F})=0$,
then both inequalities in \eqref{equation:3-4-S-easy-case} are equalities.
In this case, it follows from \cite{AbbanZhuang,Book,Fujita2021}~that
$\delta_P(Y,\Delta_Y)=\frac{1}{S_{L}(S)}=\frac{9}{7}$, which contradicts $\beta_{Y,\Delta_Y}(\mathbf{F})\leqslant 0$.
Therefore, if  $\mathfrak{C}=P$, then $\beta_{Y,\Delta_Y}(\mathbf{F})>0$.
This proves ($\mathrm{4}$).

Finally, let us prove ($\mathrm{5}$). We suppose that $Z$ is not an~irreducible component of the~curve $R_S$.
Then $R_S$ is tangent to the~curve $Z$ at the~point $P$.
Let $\alpha\colon\widetilde{S}\to S$ be the~blow up of the~point~$P$,
and let $\beta\colon\overline{S}\to\widetilde{S}$ be the~blow up of the~intersection point of the~$\alpha$-exceptional curve
and the~proper transform of the~curve $Z$.
Then there exists the~following commutative diagram:
$$
\xymatrix{
\widetilde{S}\ar@{->}[d]_\alpha&&\overline{S}\ar@{->}[ll]_\beta\ar@{->}[d]^\gamma\\
S&&\widehat{S}\ar@{->}[ll]^\rho}
$$
where $\gamma$ is the~contraction of the~proper transform of the~$\alpha$-exceptional curve to an~ordinary double point of the~surface $\widehat{S}$,
and $\rho$ is the~contraction of the~proper transform of the~$\beta$-exceptional curve.
Then $\widehat{S}$ is a~singular del Pezzo surface of degree~$6$,
and $\rho$ is a~weighted blow up with weights $(1,2)$.

Denote by $\widehat{Z}$, $\widehat{C}$, $R_{\widehat{S}}$ the~proper transforms on $\widehat{S}$ via $\rho$ of the~curves $Z$, $C$, $R_S$, respectively.
Let~$\mathbf{e}$~be the~$\rho$-exceptional curve, and let
$$
\widehat{t}(u)=\inf\Big\{v\in \mathbb R_{\geqslant 0} \ \big|\ \text{the~divisor $\rho^*\big(P(u)\big\vert_{S}\big)-v\mathbf{e}$ is pseudoeffective}\Big\}.
$$
Observe that
$$
\rho^*\big(P(u)\big\vert_{S}\big)-v\mathbf{e}\sim_{\mathbb{R}}
\left\{\aligned
&\widehat{Z}+\widehat{C}+(3-v)\mathbf{e}\ \text{for } u\in[0,1], \\
&\widehat{Z}+(2-u)\widehat{C}+(4-u-v)\mathbf{e}\ \text{for } u\in[1,2].
\endaligned
\right.
$$
Thus, we conclude that
$$
\widehat{t}(u)=\left\{\aligned
&3\ \text{for } u\in[0,1], \\
&4-u\ \text{for } u\in[1,2].
\endaligned
\right.
$$
Now, for every $u\in[0,2]$ and every $v\in[0,\widehat{t}(u)]$,
we let $\widehat{P}(u,v)$ be the~positive part of the~Zariski decomposition of
the~divisor $\rho^*(P(u)\vert_{S})-v\mathbf{e}$, and let $\widehat{N}(u,v)$ be its negative part.
Let
$$
S_L\big(W^{S}_{\bullet,\bullet};\mathbf{e}\big)=\frac{3}{L^3}\int\limits_0^2\int\limits_0^{\widehat{t}(u)}\big(\widehat{P}(u,v)\big)^2dvdu.
$$
For every point $O\in\mathbf{e}$, let
$$
S\big(W_{\bullet, \bullet,\bullet}^{\widehat{S},\mathbf{e}};O\big)=\frac{3}{L^3}\int\limits_0^2\int\limits_0^{\widehat{t}(u)}\big(\widehat{P}(u,v)\cdot\mathbf{e}\big)^2dvdu+F_O\big(W_{\bullet, \bullet,\bullet}^{\widehat{S},\mathbf{e}}\big),
$$
where
$$
F_O\big(W_{\bullet,\bullet,\bullet}^{\widehat{S},\mathbf{e}}\big)=\frac{6}{L^3}\int\limits_0^2\int\limits_0^{\widehat{t}(u)}\big(\widehat{P}(u,v)\cdot\mathbf{e}\big)\cdot \mathrm{ord}_O\big(\widehat{N}(u,v)\big|_\mathbf{e}\big)dvdu.
$$
Let $Q$ be the~singular point of the~surface $\widehat{S}$. Then $Q=\widehat{C}\cap\mathbf{e}$.
Let $\Delta_{\widehat{S}}=\frac{1}{2}R_{\widehat{S}}$ and $\Delta_{\mathbf{e}}=\frac{1}{2}Q+\Delta_{\widehat{S}}\vert_{\mathbf{e}}$.
Then it follows from \cite{AbbanZhuang,Book,Fujita2021} that
\begin{equation}
\label{equation:3-4-OK-case-final}
\delta_P(Y,\Delta_Y)\geqslant\min\left\{
\min_{O\in\mathbf{e}}\frac{A_{\mathbf{e},\Delta_{\mathbf{e}}}(O)}{S_L(W_{\bullet, \bullet,\bullet}^{\widehat{S},\mathbf{e}};O)},
\frac{A_{S,\Delta_S}(\mathbf{e})}{S_L(W_{\bullet,\bullet}^S;\mathbf{e})},\frac{A_{Y,\Delta_Y}(S)}{S_{L}(S)}\right\},
\end{equation}
where $A_{Y,\Delta_Y}(S)=1$, $A_{S,\Delta_S}(\mathbf{e})=2$
$A_{\mathbf{e},\Delta_{\mathbf{e}}}(O)=1-\mathrm{ord}_O(\Delta_{\mathbf{e}})$. Moreover, if $0\leqslant u\leqslant 1$, then
$$
\widehat{P}(u,v)\sim_{\mathbb{R}}\left\{\aligned
&\widehat{Z}+\widehat{C}+(3-v)\mathbf{e}\ \text{ if } 0\leqslant v\leqslant 1, \\
&\widehat{C}+\frac{3-v}{2}\big(\widehat{Z}+2\mathbf{e}\big)\ \text{ if } 1\leqslant v\leqslant 2,\\
&\frac{3-v}{2}\big(2\widehat{C}+\widehat{Z}+2\mathbf{e}\big)\ \text{ if } 2\leqslant v\leqslant 3,
\endaligned
\right.
$$
and
$$
\widehat{N}(u,v)=\left\{\aligned
&0\ \text{ if } 0\leqslant v\leqslant 1, \\
&\frac{v-1}{2}\widehat{Z}\ \text{ if } 1\leqslant v\leqslant 2,\\
&\frac{v-1}{2}\widehat{Z}+(v-2)\widehat{C}\ \text{ if } 2\leqslant v\leqslant 3,
\endaligned
\right.
$$
which gives
$$
\big(\widehat{P}(u,v)\big)^2=\left\{\aligned
&2-\frac{v^2}{2}\ \text{ if } 0\leqslant v\leqslant 1, \\
&\frac{5}{2}-v\ \text{ if } 1\leqslant v\leqslant 2,\\
&\frac{(3-v)^2}{2}\ \text{ if } 2\leqslant v\leqslant 3,
\endaligned
\right.
$$
and
$$
\widehat{P}(u,v)\cdot\mathbf{e}=\left\{\aligned
&\frac{v}{2}\ \text{ if } 0\leqslant v\leqslant 1, \\
&\frac{1}{2}\ \text{ if } 1\leqslant v\leqslant 2,\\
&\frac{3-v}{2}\ \text{ if } 2\leqslant v\leqslant 3.
\endaligned
\right.
$$
Similarly, if $1\leqslant u\leqslant 2$, then
$$
\widehat{P}(u,v)\sim_{\mathbb{R}}\left\{\aligned
&\widehat{Z}+(2-u)\widehat{C}+(4-u-v)\mathbf{e}\ \text{ if } 0\leqslant v\leqslant 2-u, \\
&\frac{4-u-v}{2}\big(\widehat{Z}+2\mathbf{e}\big)+(2-u)\widehat{C}\ \text{ if } 2-u\leqslant v\leqslant 2,\\
&\frac{4-u-v}{2}\big(2\widehat{C}+\widehat{Z}+2\mathbf{e}\big)\ \text{ if } 2\leqslant v\leqslant 4-u,
\endaligned
\right.
$$
and
$$
\widehat{N}(u,v)=\left\{\aligned
&0\ \text{ if } 0\leqslant v\leqslant 2-u, \\
&\frac{v+u-2}{2}\widehat{Z}\ \text{ if } 2-u\leqslant v\leqslant 2,\\
&\frac{v+u-2}{2}\widehat{Z}+(v-2)\widehat{C}\ \text{ if } 2\leqslant v\leqslant 4-u,
\endaligned
\right.
$$
which gives
$$
\big(\widehat{P}(u,v)\big)^2=\left\{\aligned
&4-2u-\frac{v^2}{2}\ \text{ if } 0\leqslant v\leqslant 2-u, \\
&\frac{(u-2)(u+2v-6)}{2}\ \text{ if } 2-u\leqslant v\leqslant 2,\\
&\frac{(4-u-v)^2}{2}\ \text{ if } 2\leqslant v\leqslant 4-u,
\endaligned
\right.
$$
and
$$
\widehat{P}(u,v)\cdot\mathbf{e}=\left\{\aligned
&\frac{v}{2}\ \text{ if } 0\leqslant v\leqslant 2-u, \\
&1-\frac{u}{2}\ \text{ if } 2-u\leqslant v\leqslant 2,\\
&\frac{4-u-v}{2}\ \text{ if } 2\leqslant v\leqslant 4-u.
\endaligned
\right.
$$
Now, integrating, we get $S_L(W_{\bullet,\bullet}^S;\mathbf{e})=\frac{13}{9}<2=A_{S,\Delta_S}(\mathbf{e})$ and
$$
S_L\big(W_{\bullet, \bullet,\bullet}^{\widehat{S},\mathbf{e}};O\big)=
\left\{\aligned
&\frac{3}{16}\ \text{ if } O\not\in\widehat{Z}\cup\widehat{C}, \\
&\frac{2}{9}\ \text{ if } O\in\widehat{C},\\
&\frac{1}{2}\ \text{ if } O\in\widehat{Z}.
\endaligned
\right.
$$
Hence, using \eqref{equation:3-4-OK-case-final}, we obtain $\delta_P(Y,\Delta_Y)>1$ as required.
\end{proof}

Now, we are ready to prove

\begin{lemma}
\label{lemma:3-4-center-point}
Suppose that $\beta_{Y,\Delta_Y}(\mathbf{F})\leqslant 0$. Then $\mathfrak{C}$ is a point.
\end{lemma}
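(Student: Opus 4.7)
Since Lemma~\ref{lemma:3-4-surfaces} handles the surface case, suppose for contradiction that $\mathfrak{C}$ is an irreducible curve with $\beta_{Y,\Delta_Y}(\mathbf{F})\leqslant 0$. The plan is to use Lemmas~\ref{lemma:3-4-E}, \ref{lemma:3-4-F}, \ref{lemma:3-4-S} to progressively restrict the geometry of $\mathfrak{C}$ until only one configuration remains, and then to rule that configuration out by an equality analysis.

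\textbf{Reductions.} If $\mathfrak{C}\subset E$, the ``moreover'' clause of Lemma~\ref{lemma:3-4-E} gives $\beta_{Y,\Delta_Y}(\mathbf{F})>0$; so $\mathfrak{C}\not\subset E$, and we may choose a general $P\in\mathfrak{C}$ placing us in case $(\spadesuit)$. With $S$ the fiber of $\psi$ through $P$ and $F$ the fiber of $\pi_1$ through $P$: Lemma~\ref{lemma:3-4-S}(1) rules out $\mathfrak{C}\subset S$, and Lemma~\ref{lemma:3-4-S}(2), applied to a general $P\in\mathfrak{C}\setminus R$, rules out $\mathfrak{C}\not\subset R$. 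Hence $\mathfrak{C}\subset R$ and $\mathfrak{C}$ is not contained in any $\psi$-fiber. By Lemma~\ref{lemma:3-4-S}(5), at a general $P\in\mathfrak{C}$ either $R_S$ is singular at $P$ or $Z\subset R$. A class computation rules out the second alternative: as $P$ varies on $\mathfrak{C}$, the $\phi$-fibers $Z$ sweep out the divisor $\phi^{-1}(\phi(\mathfrak{C}))\in|aF+bS|$ for some $a,b\in\mathbb{Z}_{\geqslant 0}$ (here $\phi(\mathfrak{C})$ must be a curve, since if $\phi(\mathfrak{C})$ were a point then $\mathfrak{C}\subset S$, already excluded). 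The irreducible $R$ has class $2F+2E+2S$, which is not in $|aF+bS|$ because of the $2E$ summand, so the swept divisor cannot lie in $R$. Thus, at a general $P\in\mathfrak{C}$ we have $R_S$ singular at $P$ and $Z\not\subset R$, and Lemma~\ref{lemma:F-S}(viii) then implies that $R_F$ is smooth on the entire fiber $F$.

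\textbf{The main obstacle.} Applying Lemma~\ref{lemma:3-4-F} yields $\delta_P(Y,\Delta_Y)\geqslant 1$, which combined with $\beta_{Y,\Delta_Y}(\mathbf{F})\leqslant 0$ forces the boundary equality $\delta_P(Y,\Delta_Y)=1$. Ruling out this equality when $\mathfrak{C}$ is a curve (rather than a point, as in the ``moreover'' clause of Lemma~\ref{lemma:3-4-F}) is the genuine obstacle. We plan to invoke the Abban--Zhuang equality case from the proof of Lemma~\ref{lemma:3-4-F}: since $A_{Y,\Delta_Y}(F)/S_L(F)=2>1$, the equality forces $\delta_P(F,\Delta_F;W^F_{\bullet,\bullet})=1$, and by the Nemuro identity $S_L(W^F_{\bullet,\bullet};\mathbf{f})=S_{F,\Delta_F}(\mathbf{f})$ (which holds in our setting because $(L-uF)|_F=L|_F=-(K_F+\Delta_F)$ and $L^3/3=(K_F+\Delta_F)^2$) this amounts to $\delta_P(F,\Delta_F)=1$. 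But $(F,\Delta_F)$ is K-polystable, its double cover being a smooth K-polystable del Pezzo of degree~$6$; hence $\delta_P(F,\Delta_F)=1$ can hold only on a proper destabilising subvariety of $F$. Verifying that a general $P\in\mathfrak{C}$, constrained to be a singular point of $R_S$ on the varying fiber $F$, avoids this destabilising subvariety---so that $\delta_P(F,\Delta_F)>1$---would upgrade the conclusion of Lemma~\ref{lemma:3-4-F} to $\delta_P(Y,\Delta_Y)>1$, producing the desired contradiction with $\beta\leqslant 0$. This last verification, which amounts to controlling the family $\{(F,\Delta_F)\}_{P\in\mathfrak{C}}$ together with the location of $P$ inside it, is the technical heart of the argument.
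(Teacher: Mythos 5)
Your reductions (killing the surface case, $\mathfrak{C}\not\subset E$, $\mathfrak{C}\not\subset S$, and $\mathfrak{C}\subset R$) are in order, but you then take a wrong turn and leave a genuine gap.

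After excluding $Z\subset R$, you conclude from Lemma~\ref{lemma:3-4-S}(5) that at a \emph{general} $P\in\mathfrak{C}$ the curve $R_S$ is singular at $P$, and you then try (and fail) to rule out this configuration via Lemma~\ref{lemma:3-4-F} and a destabilising-subvariety analysis that you acknowledge is incomplete. But that configuration is impossible for an elementary reason you have not used: since $\psi(\mathfrak{C})$ is a curve, i.e.\ $\psi(\mathfrak{C})=\mathbb{P}^1_{y_0,y_1}$, a general point $P\in\mathfrak{C}$ lies on a \emph{general} fiber $S$ of $\psi$, and because $R$ is smooth, the general fiber $R_S=R\cap S$ of $R\to\mathbb{P}^1_{y_0,y_1}$ is a smooth curve. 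Hence $R_S$ is smooth at $P$ for general $P\in\mathfrak{C}$, and by Lemma~\ref{lemma:F-S}(v) also $Z\not\subset R$. Applying Lemma~\ref{lemma:3-4-S}(2) (if $P\not\in R$) or (5) (if $P\in R$) immediately gives $\delta_P(Y,\Delta_Y)>1$, contradicting $\beta_{Y,\Delta_Y}(\mathbf{F})\leqslant 0$. This is precisely the route the paper takes; it needs neither the class computation ruling out $Z\subset R$ by hand nor any discussion of equality in Lemma~\ref{lemma:3-4-F}. Your ``main obstacle'' is a phantom created by failing to notice that $R_S$ is automatically smooth on a general $\psi$-fiber, and as written your proposal does not close the argument.
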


\begin{proof}
Suppose $\mathfrak{C}$ is not a point.
By Lemma~\ref{lemma:3-4-surfaces}, the~center $\mathfrak{C}$ is not a surface.
Then $\mathfrak{C}$ is a curve.
We may assume that $P$ is a general point in $\mathfrak{C}$.
By Lemma~\ref{lemma:3-4-E}, we have $\mathfrak{C}\not\subset E$, so $P\not\in E$ either.

If $\psi(\mathfrak{C})=\mathbb{P}^1_{y_0,y_1}$, then $S$ is a general fiber of the~morphism $\psi$,
which implies that $R_S$ is smooth, so that $Z\not\subset R$ by Lemma~\ref{lemma:F-S}.
Then $\delta_P(Y,\Delta_Y)>1$ by Lemma~\ref{lemma:3-4-S}, which contradicts $\beta_{Y,\Delta_Y}(\mathbf{F})\leqslant 0$.
Thus, we see that $\psi(\mathfrak{C})$ is point in $\mathbb{P}^1_{y_0,y_1}$.
This means that $\mathfrak{C}\subset S$.

Now, applying Lemma~\ref{lemma:3-4-S}, we get $\beta_{Y,\Delta_Y}(\mathbf{F})>0$, which is a contradiction.
\end{proof}

Now, we suppose that $\beta_{Y,\Delta_Y}(\mathbf{F})\leqslant 0$. Let us seek for a contradiction.
First, applying Lemma~\ref{lemma:3-4-center-point}, we see that the~center $\mathfrak{C}=C_Y(\mathbf{F})$ is a point.
Using notations we introduced earlier, we have $P=\mathfrak{C}$.
Moreover, applying Lemmas~\ref{lemma:F-S}, \ref{lemma:3-4-E}, \ref{lemma:3-4-F}, \ref{lemma:3-4-S}, we obtain the~following assertions:
\begin{itemize}
\item $P\not\in E$ by Lemma~\ref{lemma:3-4-E},
\item $R_F$ is singular by Lemma~\ref{lemma:3-4-F},
\item $P\in R$ by Lemma~\ref{lemma:3-4-S},
\item $R_S$ is singular at $P$ by Lemma~\ref{lemma:3-4-S},
\item $R_F$ is smooth at $P$ by Lemma~\ref{lemma:F-S},
\item $Z\subset R$ by Lemma~\ref{lemma:3-4-S}.
\end{itemize}
In particular, the~curve $R_S$ is reducible.
Namely, we have $R_S=Z+T$,
where $T$ is a possibly reducible reduced curve in $|Z+2C|$ such that $P\in T$.

\begin{lemma}
\label{lemma:S-A1}
The curve $R_S$ does not have an~ordinary double singularity at $P$.
\end{lemma}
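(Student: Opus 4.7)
Argue by contradiction, assuming $R_S$ has an ordinary double singularity at $P$.  Unpacking the local expansion of the defining equation of $R$ (derived in the proof of Lemma~\ref{lemma:F-S} under the constraints $a_0=a_1=d_0=f_0=0$, $b_0 \ne 0$ already forced by the preceding lemmas), the ODP assumption is precisely the condition $d_1 \ne 0$, and locally $R_S = Z \cup T$ with $T\colon d_1 z + a_2 x + (\text{higher}) = 0$ smooth at $P$ and transverse to $Z$.

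The plan is to apply Abban--Zhuang to a suitable weighted blow-up of $P$ on the surface $S$, following the pattern of Lemmas~\ref{lemma:2-18-OK-case} and~\ref{lemma:3-4-S}(5).  Construct $\rho\colon \widehat{S}\to S$ as the weighted blow-up of weights $(1,2)$ obtained from $\alpha\colon\widetilde{S}\to S$ (the ordinary blow-up of $P$), followed by $\beta\colon\overline{S}\to\widetilde{S}$ blowing up the distinguished point $\widetilde{T}\cap\mathbf{e}$ (or $\widetilde{Z}\cap\mathbf{e}$, whichever gives the sharper bound), and then contracting $\gamma\colon\overline{S}\to\widehat{S}$ of the strict transform of $\mathbf{e}$ to an $A_1$ singularity.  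Let $\mathbf{z}$ be the $\rho$-exceptional curve.

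Compute the Zariski decomposition of $\rho^{*}(P(u)|_S) - v\mathbf{z}$ as $(u,v)$ varies, using the intersection table among $\mathbf{z}$, $\widehat{Z}$, $\widehat{C}$ on $\widehat{S}$ (which involves $\tfrac{1}{2}$'s from the $A_1$ singularity, exactly as in the tables of Lemma~\ref{lemma:2-18-OK-case}).  This splits into several $(u,v)$-regions with break points depending on the geometry.  Integrate to obtain $S_L\bigl(W^{S}_{\bullet,\bullet};\mathbf{z}\bigr) < A_{S,\Delta_S}(\mathbf{z})$.  Then, for every point $O\in\mathbf{z}$ (including the singular point $Q$ of $\widehat{S}$ and the intersections with the strict transforms $\widehat{Z}$, $\widehat{T}$), verify
$$\frac{A_{\mathbf{z},\Delta_{\mathbf{z}}}(O)}{S_L\bigl(W^{\widehat{S},\mathbf{z}}_{\bullet,\bullet,\bullet};O\bigr)} > 1,$$
where $\Delta_{\mathbf{z}} = \tfrac{1}{2}Q + \Delta_{\widehat{S}}|_{\mathbf{z}}$ accounts both for the $A_1$-contribution and for the two boundary components $\widehat{Z},\widehat{T}$ of $\Delta_{\widehat{S}}$.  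Combined with $A_{Y,\Delta_Y}(S)/S_L(S) = 9/7 > 1$ from Lemma~\ref{lemma:3-4-surfaces}, Abban--Zhuang then yields $\delta_P(Y,\Delta_Y) > 1$, contradicting $\beta_{Y,\Delta_Y}(\mathbf{F})\le 0$.

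\textbf{Main obstacle.}  The delicate part is verifying \emph{strict} inequality at the points of $\mathbf{z}$ where $S_L$ comes closest to the log discrepancy---namely at the $A_1$-point $Q$ and at $\widehat{T}\cap\mathbf{z}$.  The node hypothesis $d_1\ne 0$ enters crucially here: it guarantees that $\widehat{T}$ meets $\mathbf{z}$ at a point distinct from both $Q$ and $\widehat{Z}\cap\mathbf{z}$, so the $\frac{1}{2}$-contributions from $Q$, from $\widehat{Z}|_{\mathbf{z}}$, and from $\widehat{T}|_{\mathbf{z}}$ do not accumulate at a single point.  This separation is what forces the strict inequality and amounts to resolving a ``deadlock'' of the type anticipated in the Introduction, where the ordinary-blow-up flag alone would yield only $\delta_P\ge 1$ (the curve-level bound $A_{S,\Delta_S}(Z)/S_L(W^S_{\bullet,\bullet};Z)=\tfrac{1}{2}/\tfrac{1}{2}=1$ is tight) and the refinement via the weighted blow-up is necessary to break it.
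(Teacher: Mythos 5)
Your plan diverges from the paper's proof in a way that introduces a genuine error. The paper proves this lemma with the \emph{ordinary} blow-up $\alpha\colon\widetilde{S}\to S$ of $P$: since $R_S$ has an ordinary double point, $Z$ and $T$ are transverse at $P$, so the ordinary blow-up already separates $\widetilde{Z}\cap\mathbf{e}$ and $\widetilde{T}\cap\mathbf{e}$, and they find $S(W^{S}_{\bullet,\bullet};\mathbf{e})=\tfrac{17}{18}<1$ together with $S(W^{\widetilde{S},\mathbf{e}}_{\bullet,\bullet,\bullet};O)\leqslant 1-\mathrm{ord}_O(\Delta_{\mathbf{e}})$ for every $O\in\mathbf{e}$, which yields the contradiction via the ``not all inequalities are equalities'' clause of Abban--Zhuang for a zero-dimensional center. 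Your diagnosis that the ordinary-blow-up flag yields only $\delta_P\geqslant 1$ and that a $(1,2)$ weighted blow-up is \emph{needed} to break a deadlock is therefore incorrect; you are conflating the tightness of the curve-level bound $A_{S,\Delta_S}(Z)/S_L(W^S_{\bullet,\bullet};Z)=1$ from the flag $S\supset Z$ in Lemma~\ref{lemma:3-4-S} with the genuinely different flag $\widetilde{S}\supset\mathbf{e}$ used here. The $(1,2)$ weighted blow-up is the tool for the \emph{tangent} cases in Lemmas~\ref{lemma:2-18-OK-case},~\ref{lemma:2-18-blow-up} and~\ref{lemma:3-4-S}(5); for a node, the ordinary blow-up is both correct and simpler.

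More seriously, your claimed geometric separation is false. Suppose $\beta$ blows up $\widetilde{Z}\cap\mathbf{e}$ and $\gamma$ contracts the strict transform $\mathbf{e}'$ of $\mathbf{e}$ to the $A_1$-point $Q$. On $\overline{S}$ the strict transform of $T$ still meets $\mathbf{e}'$ (it met $\mathbf{e}$ at a point distinct from $\widetilde{Z}\cap\mathbf{e}$), and so does the strict transform of $C$. After contracting $\mathbf{e}'$, the images $\widehat{T}$ and $\widehat{C}$ both pass through $Q$, and $\widehat{T}\cap\mathbf{z}=\{Q\}$. So the boundary contribution of $\widehat{T}$ to $\Delta_{\mathbf{z}}$ lands precisely at the $A_1$-point, the opposite of what you asserted. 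The same accumulation occurs (with $\widehat{Z}$) if instead $\beta$ blows up $\widetilde{T}\cap\mathbf{e}$. Because the later delicate estimates in your plan hinge on this separation, the plan does not go through as written. You would need to carry out the Zariski decomposition and different computations honestly at the $A_1$-point with the coincident boundary component before concluding anything, and it is not evident that the resulting inequalities are any stronger than what the ordinary blow-up already gives.
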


\begin{proof}
Suppose that $R_S$ has an~ordinary double singularity at $P$. Let us seek for a~contradiction.
Let us use notations introduced in the~proof of Lemma~\ref{lemma:3-4-S}.
Then we have
$$
P(u)\big\vert_{S}\sim_{\mathbb{R}}\left\{\aligned
&C+Z\ \text{ for } 0\leqslant u\leqslant 1, \\
&(2-u)C+Z\ \text{ for } 1\leqslant u\leqslant 2.\\
\endaligned
\right.
$$

Let $\alpha\colon\widetilde{S}\to S$ be the~blow up of the~point~$P$, let $\mathbf{e}$ be the~$\alpha$-exceptional curve.
For $u\in[0,2]$,~let
$$
\widetilde{t}(u)=\max\Big\{v\in \mathbb R_{\geqslant 0} \ \big| \ \alpha^*\big(P(u)|_S\big)-v\mathbf{e} \text{ is pseudoeffective}\Big\}.
$$
For $v\in [0,\widetilde{t}(u)]$, let $\widetilde{P}(u,v)$ be the~positive part of the~Zariski decomposition of $\alpha^*(P(u)|_S)-v\mathbf{e}$,
and let $\widetilde{N}(u,v)$ be the~negative part of the~Zariski decomposition of this divisor. Set
$$
S\big(W_{\bullet, \bullet}^{S};\mathbf{e}\big)=\frac{3}{L^3}\int\limits_0^2\int\limits_0^{\widetilde{t}(u)}\widetilde{P}(u,v)^2dvdu.
$$
Then, for every point $O\in\mathbf{e}$, we set
$$
S\big(W_{\bullet, \bullet,\bullet}^{\widetilde{S},\mathbf{e}};O\big)=
\frac{3}{L^3}\int\limits_0^2\int\limits_0^{\widetilde{t}(u)}\Big(\widetilde{P}(u,v)\cdot\mathbf{e}\Big)^2dvdu+F_O\big(W_{\bullet, \bullet,\bullet}^{\widetilde{S},\mathbf{e}}\big),
$$
where
$$
F_O\big(W_{\bullet, \bullet,\bullet}^{\widetilde{S},\mathbf{e}}\big)=
\frac{6}{L^3}\int\limits_0^2\int\limits_0^{\widetilde{t}(u)}\big(\widetilde{P}(u,v)\cdot\mathbf{e}\big)\cdot\mathrm{ord}_O\Big(\widetilde{N}(u,v)\big|_{\mathbf{e}}\Big)dvdu.
$$
Let $\widetilde{C}$, $\widetilde{Z}$, $\widetilde{T}$ be the~proper transforms on $\widetilde{S}$ of the~curves $C$, $Z$, $T$, respectively.
Set $\Delta_{\widetilde{S}}=\frac{1}{2}\widetilde{Z}+\frac{1}{2}\widetilde{T}$.
Then $\widetilde{Z}$ and $\widetilde{T}$ intersect $\mathbf{e}$ transversally at two distinct points,
since $T$ and $Z$ do not tangent at~$P$.
Set $\Delta_\mathbf{e}=\Delta_{\widetilde{S}}\vert_{\mathbf{e}}$.
Then it follows from \cite{AbbanZhuang,Book,Fujita2021}~that
$$
1\geqslant\frac{A_{Y,\Delta_Y}(\mathbf{F})}{S_{L}(\mathbf{F})}\geqslant
\delta_P(Y,\Delta_Y)\geqslant\min\left\{\min_{O\in\mathbf{e}}\frac{A_{\mathbf{e},\Delta_{\mathbf{e}}}(O)}{S(W_{\bullet, \bullet,\bullet}^{\widetilde{S},\mathbf{e}};O)},\frac{A_{S,\Delta_S}(\mathbf{e})}{S(W_{\bullet, \bullet}^{S};\mathbf{e})},\frac{A_{Y,\Delta_Y}(S)}{S_{L}(S)}\right\},
$$
and not all inequalities here are equalities. Note that $A_{Y,\Delta_Y}(S)=1$, $A_{S,\Delta_S}(\mathbf{e})=1$, and
$$
A_{\mathbf{e},\Delta_{\mathbf{e}}}(O)=1-\mathrm{ord}_{O}(\Delta_{\mathbf{e}})=
\left\{\aligned
&\frac{1}{2}\ \text{ if } O=\widetilde{Z}\cap\mathbf{e}, \\
&\frac{1}{2}\ \text{ if } O=\widetilde{T}\cap\mathbf{e}, \\
&1\ \text{ if } O\not\in\widetilde{Z}\cup\widetilde{T}.
\endaligned
\right.
$$
Since $S_{L}(S)=\frac{7}{9}$, we conclude that $S(W_{\bullet, \bullet}^{S};\mathbf{e})>1$ or
there exists a point $O\in\mathbf{e}$ such that
$$
S\big(W_{\bullet, \bullet,\bullet}^{\widetilde{S},\mathbf{e}};O\big)>1-\mathrm{ord}_{O}\big(\Delta_{\mathbf{e}}\big).
$$
Let us compute $S(W_{\bullet, \bullet}^{S};\mathbf{e})$, and let us compute $S(W_{\bullet, \bullet,\bullet}^{\widetilde{S},\mathbf{e}};O)$
for every point $O\in\mathbf{e}$.

Let $v$ be a non-negative real number.
Then
$$
\alpha^*\big(P(u)|_{S}\big)-v\mathbf{e}\sim_{\mathbb{R}}
\left\{\aligned
&\widetilde{C}+\widetilde{Z}+(2-v)\mathbf{e}\ \text{ for } 0\leqslant u\leqslant 1, \\
&(2-u)\widetilde{C}+\widetilde{Z}+(3-u-v)\mathbf{e}\ \text{ for } 1\leqslant u\leqslant 2.
\endaligned
\right.
$$
Since $\widetilde{Z}$ and $\widetilde{C}$ are disjoint $(-1)$-curves in $\widetilde{S}$, we have
$$
\widetilde{t}(u)=
\left\{\aligned
&2\ \text{ for } 0\leqslant u\leqslant 1, \\
&3-u\ \text{ for } 1\leqslant u\leqslant 2.
\endaligned
\right.
$$
Furthermore, if $0\leqslant u\leqslant 1$, then
$$
\widetilde{P}(u,v)\sim_{\mathbb{R}}
\left\{\aligned
&\widetilde{C}+\widetilde{Z}+(2-v)\mathbf{e}\ \text{ for } 0\leqslant v\leqslant 1, \\
&(2-v)\big(\widetilde{C}+\widetilde{Z}+\mathbf{e}\big)\ \text{ for } 1\leqslant v\leqslant 2,
\endaligned
\right.
$$
and
$$
\widetilde{N}(u,v)=
\left\{\aligned
&0\ \text{ for } 0\leqslant v\leqslant 1, \\
&(v-1)\big(\widetilde{C}+\widetilde{Z}\big)\ \text{ for } 1\leqslant v\leqslant 2,
\endaligned
\right.
$$
which gives
$$
\big(\widetilde{P}(u,v)\big)^2=
\left\{\aligned
&2-v^2\ \text{ for } 0\leqslant v\leqslant 1, \\
&(2-v)^2\ \text{ for } 1\leqslant v\leqslant 2,
\endaligned
\right.
$$
and
$$
\widetilde{P}(u,v)\cdot\mathbf{e}=
\left\{\aligned
&v\ \text{ for } 0\leqslant v\leqslant 1, \\
&2-v\ \text{ for } 1\leqslant v\leqslant 2.
\endaligned
\right.
$$
Similarly, if $1\leqslant u\leqslant 2$, then
$$
\widetilde{P}(u,v)\sim_{\mathbb{R}}
\left\{\aligned
&(2-u)\widetilde{C}+\widetilde{Z}+(3-u-v)\mathbf{e}\ \text{ for } 0\leqslant v\leqslant 2-u, \\
&(2-u)\widetilde{C}+(3-u-v)\big(\widetilde{Z}+\mathbf{e}\big)\ \text{ for } 2-u\leqslant v\leqslant 1,\\
&(3-u-v)\big(\widetilde{C}+\widetilde{Z}+\mathbf{e}\big)\ \text{ for } 1\leqslant v\leqslant 3-u,
\endaligned
\right.
$$
and
$$
\widetilde{N}(u,v)=
\left\{\aligned
&0\ \text{ for } 0\leqslant v\leqslant 2-u, \\
&(v+u-2)\widetilde{Z}\ \text{ for } 2-u\leqslant v\leqslant 1,\\
&(v+u-2)\widetilde{Z}+(v-1)\widetilde{C}\ \text{ for } 1\leqslant v\leqslant 3-u,
\endaligned
\right.
$$
which gives
$$
\big(\widetilde{P}(u,v)\big)^2=
\left\{\aligned
&4-2u-v^2\ \text{ for } 0\leqslant v\leqslant 2-u, \\
&(2-u)(4-u-2v) \ \text{ for } 2-u\leqslant v\leqslant 1,\\
&(3-u-v)^2\ \text{ for } 1\leqslant v\leqslant 3-u,
\endaligned
\right.
$$
and
$$
\widetilde{P}(u,v)\cdot\mathbf{e}=
\left\{\aligned
&v\ \text{ for } 0\leqslant v\leqslant 2-u, \\
&2-u \ \text{ for } 2-u\leqslant v\leqslant 1,\\
&3-u-v\ \text{ for } 1\leqslant v\leqslant 3-u.
\endaligned
\right.
$$
Therefore, integrating,
we get $S(W_{\bullet, \bullet}^{\widetilde{S}};\mathbf{e})=\frac{17}{18}<1$ and
$$
S\big(W_{\bullet, \bullet,\bullet}^{\widetilde{S},\mathbf{e}};O\big)=
\frac{11}{36}+F_O\big(W_{\bullet, \bullet,\bullet}^{\widetilde{S},\mathbf{e}}\big)=
\left\{\aligned
&\frac{11}{36}\ \text{ if } O\not\in\widetilde{C}\cup\widetilde{Z}\,\\
&\frac{4}{9}\ \text{ if } O\in\widetilde{C},\\
&\frac{1}{2}\ \text{ if } O\in\widetilde{Z},
\endaligned
\right.
$$
which gives $S(W_{\bullet, \bullet,\bullet}^{\widetilde{S},\mathbf{e}};O)\leqslant 1-\mathrm{ord}_{O}(\Delta_{\mathbf{e}})$ for every point $O\in\mathbf{e}$.
This is a contradiction.
\end{proof}

Now, using Lemma~\ref{lemma:S-A1}, we see that one of the~following two remaining cases occurs:
\begin{itemize}
\item[($\mathbb{A}_3$)] $R_S=Z+T$, where $T$ is a smooth curve in $|Z+2C|$ that is tangent to $Z$ at the~point $P$,
\item[($\mathbb{D}_4$)] $R_S=Z+T=Z+C+T^\prime$, where $T^\prime$ is a smooth curve in $|Z+C|$ such that $P\in T^\prime$.
\end{itemize}
This imposes certain constraints on the~equation \eqref{equation:R}, which can be listed as follows:
\begin{itemize}
\item $a_0=0$, since $P=([1:0],[1:0;1:0])\in R$,
\item $a_1=0$ and $d_0=0$, since $R_S$ is singular at $P$,
\item $f_0=0$ and $d_0=0$, since $Z\subset R$,
\item $d_1=0$, since $R_S$ does not have ordinary double point at $P$.
\end{itemize}
Changing coordinates on $Y$, we can simplified \eqref{equation:R} a bit more.
First, we may assume that $b_0=1$, since $R$ is smooth at $P$.
Second, we have $R\cap E=\{z_0=0,x_1(f_2x_0+f_1x_1)=0\}$, but $R\cap E$~is~smooth.
Hence, we can change the~coordinate $x_0$ such that $f_2=0$ and $f_1=1$.
This simplifies \eqref{equation:R} as
\begin{multline}
\label{equation:R-simplified}
\quad\quad\quad\quad\quad x_0^2\big((c_0y_1^2+y_0y_1)z_0^2+e_0y_1z_0z_1\big)+\\
+x_0x_1\big((b_1y_0y_1+c_1y_1^2)z_0^2+e_1y_1z_0z_1+z_1^2\big)+\\
+x_1^2\big((a_2y_0^2+b_2y_0y_1+c_2y_1^2)z_0^2+(d_2y_0+e_2y_1)z_0z_1\big)=0.\quad\quad\quad
\end{multline}
Recall that $S=\{y_1=0\}\subset Y$, so we can identify $S=\mathbb{P}^1\times\mathbb{P}^1$ with coordinates $([x_0:x_1],[z_0:z_1])$.
Using this identification, we see that $Z=\{x_1=0\}\subset S$, $C=\{z_1=0\}\subset S$, and
$$
T=\big\{a_2x_1z_0^2+d_2x_1z_0z_1+x_0z_1^2=0\big\}\subset S.
$$
 that $T$ is irreducible $\iff$ $a_2\ne 0$.
Further, if $a_2=0$, then $T=C+T^\prime$ for $T^\prime=\{d_2sx+ty=0\}$,
where $d_2\ne 0$, since $R_S$ is reduced.
Thus, the~cases ($\mathbb{A}_3$) and  ($\mathbb{D}_4$) can be described as follows:
\begin{itemize}
\item[($\mathbb{A}_3$)] $a_2\ne 0$,
\item[($\mathbb{D}_4$)] $a_2=0$ and $d_2\ne 0$.
\end{itemize}
We will exclude the~remaining cases ($\mathbb{D}_4$) and ($\mathbb{A}_3$) in Sections~\ref{subsection:3-4-step-2} and \ref{subsection:3-4-step-3}, respectively.

\subsection{Exclusion of the~case ($\mathbb{D}_4$)}
\label{subsection:3-4-step-2}

Let us continue the~proof of Theorem~\ref{theorem:3-4-K-stable} started in Section~\ref{subsection:3-4-step-1}.
Now, we assume that the~surface $R$ is given by \eqref{equation:R-simplified} and we have $a_2=0$, i.e. we are in the~case~($\mathbb{D}_4$).
In the~chart $\mathbb{A}^3_{x,y,z}=\{x_0y_0z_0\ne 0\}$ with coordinates $x=\frac{x_1}{x_0}$, $y=\frac{y_1}{y_0}$, $z=\frac{z_1}{z_0}$,
we have $P=(0,0,0)$, and the~surface $R$ is given by the~following equation:
$$
y+xz^2+d_2x^2z+\big(b_1xy+e_0yz+b_2x^2y+e_1xyz+e_2x^2yz+c_0y^2+c_1xy^2+c_2x^2y^2\big)=0,
$$
where $y+xz^2+d_2x^2z$ is the~smallest degree term for the~weights $\mathrm{wt}(x)=1$, $\mathrm{wt}(y)=3$, $\mathrm{wt}(z)=1$.
Let $\lambda\colon W_0\to Y$ be the~corresponding weighted blow up of the~point $P$ with weights $(1,3,1)$,
and let $G$ be the~$\lambda$-exceptional surface. Then~$G\cong\mathbb{P}(1,3,1)$.

Let  $R_{W_0}$, $F_{W_0}$ and $S_{W_0}$ be the~proper transforms on $Y$ of the~surfaces $R$, $S$ and $F$, respectively.
Set $R_G=R_{W_0}\vert_{G}$, $\Delta_G=\frac{1}{2}R_G$ and $\Delta_{W_0}=\frac{1}{2}R_{W_0}$.
Note that
$$
\big(K_{W_0}+\Delta_{W_0}+G\big)\big|_G\sim_{\mathbb{Q}}K_G+\Delta_G.
$$
Let us also consider $(x,y,z)$ as coordinates on $G\cong\mathbb{P}(1,3,1)$ with $\mathrm{wt}(x)=1$,
$\mathrm{wt}(y)=3$, $\mathrm{wt}(z)=1$. Then $F_{W_0}\vert_{G}=\{x=0\}$, $S_{W_0}\vert_{G}=\{y=0\}$, and
$$
R_G=\big\{y+xz^2+d_2x^2z=0\big\}\subset R.
$$
Recall from the~end of Section~\ref{subsection:3-4-step-1} that $d_2\ne 0$.
Since $\mathrm{ord}_G(R)=3$, we have $A_{Y,\Delta_Y}(G)=\frac{7}{2}$. Then
$$
\delta_P(Y,\Delta_Y)\leqslant\frac{A_{Y,\Delta_Y}(G)}{S_{L}(G)}=\frac{7}{2S_{L}(G)},
$$
where
$$
S_{L}(G)=\frac{1}{L^3}\int\limits_{0}^{\infty}\mathrm{vol}\big(\lambda^*(L)-uG\big)du.
$$
Let us compute $S_{L}(G)$. To do this, note that $Y$ is toric,
and the~blow up $\lambda\colon W_0\to Y$ is also toric for the~torus action on $Y$ with an open orbit $\{x_0y_0z_0x_1y_1z_1\ne 0\}\subset Y$,
so the~threefold $W_0$ is toric, and $G$ is a torus invariant divisor.
Let us present toric data for the~threefolds $Y$ and $W_0$.

Let $\Sigma_Y$ be the~simplicial fan in $\mathbb{R}^3$ defined by the~following data:
\begin{itemize}
\item the~list of primitive generators of rays of $\Sigma_Y$ is
$$
v_1=(1,0,0),  v_2=(0,0,1),  v_3=(0,1,0), v_4=(0,0,-1),  v_5=(0,-1,1),  v_6=(-1,0,0);
$$

\item the~list of maximal cones of $\Sigma_Y$ is
$$
[1,2,3],  [1,3,4],   [1,4,5],  [1,2,5], [2,3,6],  [3,4,6],  [4,5,6],  [2,5,6],
$$
where  $[i,j,k]$ is the~cone generated by the~rays $v_i$, $v_j$, and $v_k$.
\end{itemize}
Then $Y$ is defined by $\Sigma_Y$.
Let $\Sigma_{W_0}$ be the~simplicial fan in $\mathbb{R}^3$ defined by the~following data:
\begin{itemize}
\item the~list of primitive generators of rays in $\Sigma_{W_0}$ is
\begin{align*}
v_0&=(1,3,-1), & v_1&=(1,0,0), & v_2&=(0,0,1), & v_3&=(0,1,0), \\
v_4&=(0,0,-1), & v_5&=(0,-1,1), & v_6&=(-1,0,0);
\end{align*}
\item  the~list of maximal cones  in  $\Sigma_{W_0}$ is
$$
[0,1,3],   [0,1,4],   [0,3,4],   [1,2,3],   [1,2,5], [1,4,5],      [2,3,6],  [2,5,6],  [3,4,6],   [4,5,6].
$$
\end{itemize}
Then the~toric threefold $W_0$ is given by the~fan $\Sigma_{W_0}$, which can be diagramed as follows:
\begin{center}
\begin{tikzpicture}[node distance=1cm,auto]
\node[state, inner sep=1pt,minimum size=0pt] (q_0) {$v_0$};
\node[state, inner sep=1pt,minimum size=0pt] (q_1) at (1,1) {$v_1$};
\node[state,inner sep=1pt,minimum size=0pt] (q_2) at (2.4,-.3) {$v_2$};
\node[state,inner sep=1pt,minimum size=0pt] (q_3) at (1.2,-1.3) {$v_3$};
\node[state,inner sep=1pt,minimum size=0pt] (q_4) at (-1,-1) {$v_4$};
\node[state,inner sep=1pt,minimum size=0pt] (q_5) at (3,1) {$v_5$};
\node[state,inner sep=1pt,minimum size=0pt] (q_6) at (0,-2) {$v_6$};

\path[-] (q_0) edge node[swap] {} (q_1);
\path[-] (q_0) edge node[swap] {} (q_3);
\path[-] (q_0) edge node[swap] {} (q_4);
\path[-] (q_1) edge node[swap] {} (q_2);
\path[-] (q_1) edge node[swap] {} (q_3);
\path[-] (q_1) edge node[swap] {} (q_5);
\path[-] (q_3) edge node[swap] {} (q_4);
\path[-] (q_4) edge node[swap] {} (q_6);
\path[-] (q_3) edge node[swap] {} (q_6);
\path[-] (q_2) edge node[swap] {} (q_3);
\path[-] (q_2) edge node[swap] {} (q_5);
\path[-, bend right = 30] (q_1) edge node {} (q_4);
\path[-, bend left = 70] (q_4) edge node {} (q_5);
\path[-, bend left = 50] (q_2) edge node {} (q_6);
\path[-, bend left = 90] (q_5) edge node {} (q_6);
\end{tikzpicture}
\end{center}

Let us compute $S_{L}(G)$.
Let  $P_{L}$ be the~convex polytope in the~dual space of $\mathbb{R}^3$ associated~to~$L$.
Then, since $L$ corresponds to the~lattice point $(1,2,1)$, we have
$$
P_L=\big\{x_1\geqslant -1, x_3\geqslant -1, x_2\geqslant -2, -x_3\geqslant 0, -x_2+x_3\geqslant 0, -x_1\geqslant 0\big\}.
$$
Thus, since $G$ corresponds to $v_0=(1,3,-1)$, it follows from  \cite[Corollary~7.7]{BlumJonsson} that
$$
S_{L}(G)=-\min_{v\in P_{L}\cap \mathbb{Z}^3}v\cdot (1,3,-1)+\frac{3!}{L^3}\iiint_{P_{L}}(x_1, x_2, x_3)\cdot (1,3,-1)dx_1dx_2dx_3=\frac{59}{18}.
$$
where $\cdot$ is the~standard inner product in $\mathbb{R}^3$. Consequently, we obtain $\frac{A_{Y,\Delta_Y}(G)}{S_{L}(G)}=\frac{63}{58}$

Now, let us exclude the~case ($\mathbb{D}_4$) using the~results obtained in \cite{AbbanZhuang,Book,Fujita2021}.
To do this, we~must find the~Zariski decomposition of the~divisor $\lambda^*(L)-uG$ for every $u\in\mathbb{R}_{\geqslant 0}$.
First, let us compute intersections of torus invariant divisors in $W_0$.
Let $T_i$ be the~torus invariant divisor corresponding to the~ray $v_i$. Then $T_0=G$,
and it follows from \cite[\S 6.4]{CLS} that
\begin{equation}
\label{eq:intersection}
T_iT_jT_k=\left\{
\aligned
& \frac{1}{|[i,j,k]|} \ \ \ \ \ \text {if $[i,j,k]$ belongs to the~list of maximal cones  in  $\Sigma_{W_0}$}\\
& 0 \ \ \ \ \ \text{otherwise},
\endaligned  \right.
\end{equation}
where $|[i,j,k]|$ stands for the~absolute value of the~determinant of the~$3\times 3$ matrix given by $v_i, v_j, v_k$.
This gives $T_0T_1T_4=\frac{1}{3}$ and
$$
T_0T_1T_3=T_0T_3T_4=T_1T_2T_3=T_1T_4T_5=T_1T_2T_5=T_2T_3T_6=T_3T_4T_6=T_4T_5T_6=T_2T_5T_6=1,
$$
while all other $T_iT_jT_k=0$ with distinct indices $i,j,k$.
The characters $\chi_1$, $\chi_2$, $\chi_3$ corresponding to the~lattice points $(1,0,0)$, $(0,1,0)$, $(0,0,1)$ in the~dual lattice
generate the~following relations among  the~torus invariant divisors:
\begin{equation}
\label{eq:principal1}
\aligned
&0\sim \mathrm{div}(\chi_1)=T_0+T_1-T_6, \\
&0\sim \mathrm{div}(\chi_2)=3T_0+T_3-T_5, \\
&0\sim \mathrm{div}(\chi_3)=-T_0+T_2-T_4+T_5.
\endaligned
\end{equation}
Now, using these relations, we can determine the~intersection numbers $T_i^2T_j$ for $i\ne j$.
For instance, we have $T_3^2T_6=(T_5-3T_0)T_3T_6=0$ and $T_2^2T_6=(T_0+T_4-T_5)T_2T_6=-1$.

For all possible indices $i\ne j$, let us denote by $T_iT_j$ the~torus invariant curve that
is given by the~intersection of the~divisors $T_i$ and $T_j$ provided that $T_i\cap T_j\ne\varnothing$.
Note that
\begin{center}
$T_i\cap T_j\ne\varnothing$ $\iff$ the~$2$-dimensional cone generated by  $v_i$ and  $v_j$ belongs to the~fan~$\Sigma_{W_0}$.
\end{center}
If $T_i\cap T_j\ne\varnothing$, then $T_iT_j$ is not necessarily reduced,
but its support coincides with the~torus invariant curve that corresponds to the~$2$-dimensional cone generated by the~rays $v_i$ and~$v_j$,
which we will denote by $\lfloor T_iT_j\rfloor$.

Let $u$ be a non-negative real number. For simplicity, set $L_u=\lambda^*(L)-uT_0$. Then
$$
L_u= (7-u)T_0+T_1+T_2+2T_3.
$$
Now, we can compute the~intersection of the~$\mathbb{R}$-divisor $L_u$ with each torus invariant curve in $W_0$.
For instance we have
\begin{align*}
L_uT_0T_1&=\left((7-u)T_0+T_1+T_2+2T_3\right)T_0T_1 =(7-u)T_0^2T_1+T_0T_1^2+T_0T_1T_2+2T_0T_1T_3=\frac{u}{3},\\
L_uT_0T_3&=\left((7-u)T_0+T_1+T_2+2T_3\right)T_0T_3 =(7-u)T_0^2T_3+T_0T_1T_3+T_0T_2T_3+2T_0T_3^2=u, \\
L_uT_0T_4&=\left((7-u)T_0+T_1+T_2+2T_3\right)T_0T_4 =(7-u)T_0^2T_4+T_0T_1T_4+T_0T_2T_4+2T_0T_3T_4=\frac{u}{3}.
\end{align*}
Similarly, we compute
\begin{align*}
L_uT_1T_2&=1, & L_uT_1T_3&=1-u, & L_uT_1T_4&=\frac{6-u}{3}, & L_uT_1T_5&=1, & L_uT_2T_3&=1, & L_uT_2T_5&=1, \\
L_uT_2T_6&=1, & L_uT_3T_4&=1-u, & L_uT_3T_6&=1,             & L_uT_4T_5&=1, & L_uT_4T_6&=2, & L_uT_5T_6&=1.
\end{align*}
Therefore, we see that $L_u$ is nef for $0\leqslant u\leqslant 1$.

To find Zariski decomposition of the~divisor $L_u$ for small $u>1$,
we must perform a small birational map $W_0\dasharrow W_1$ along the~two torus invariant curves  $\lfloor T_1T_3\rfloor$ and $\lfloor T_3T_4\rfloor$,
because these are the~only curves  that intersect $L_u$ negatively for small $u>1$.
The~corresponding change of fans can be diagramed as follows:

\begin{center}
\begin{tikzpicture}[node distance=1cm,auto, scale=0.75]
\node[state, inner sep=1pt,minimum size=0pt] (q_0) {$v_0$};
\node[state, inner sep=1pt,minimum size=0pt] (q_1) at (1,1) {$v_1$};
\node[state,inner sep=1pt,minimum size=0pt] (q_2) at (2.4,-.3) {$v_2$};
\node[state,inner sep=1pt,minimum size=0pt] (q_3) at (1.2,-1.3) {$v_3$};
\node[state,inner sep=1pt,minimum size=0pt] (q_4) at (-1,-1) {$v_4$};
\node[state,inner sep=1pt,minimum size=0pt] (q_5) at (3,1) {$v_5$};
\node[state,inner sep=1pt,minimum size=0pt] (q_6) at (0,-2) {$v_6$};

\node[state,inner sep=0pt,minimum size=0pt] (q_7) at (4,0) {};
\node[state,inner sep=0pt,minimum size=0pt] (q_8) at (5,0) {};
\path[->, dashed, line width=1pt] (q_7) edge node[swap] {} (q_8);

\path[-] (q_0) edge node[swap] {} (q_1);
\path[-] (q_0) edge node[swap] {} (q_3);
\path[-] (q_0) edge node[swap] {} (q_4);
\path[-] (q_1) edge node[swap] {} (q_2);
\path[-, blue, line width=1pt] (q_1) edge node[swap] {} (q_3);
\path[-] (q_1) edge node[swap] {} (q_5);
\path[-, blue, line width=1pt] (q_3) edge node[swap] {} (q_4);
\path[-] (q_4) edge node[swap] {} (q_6);
\path[-] (q_3) edge node[swap] {} (q_6);
\path[-] (q_2) edge node[swap] {} (q_3);
\path[-] (q_2) edge node[swap] {} (q_5);
\path[-, bend right = 30] (q_1) edge node {} (q_4);
\path[-, bend left = 70] (q_4) edge node {} (q_5);
\path[-, bend left = 50] (q_2) edge node {} (q_6);
\path[-, bend left = 90] (q_5) edge node {} (q_6);
\end{tikzpicture}
\begin{tikzpicture}[node distance=1cm,auto, scale=0.75]
\node[state, inner sep=1pt,minimum size=0pt] (q_0) {$v_0$};
\node[state, inner sep=1pt,minimum size=0pt] (q_1) at (1,1) {$v_1$};
\node[state,inner sep=1pt,minimum size=0pt] (q_2) at (2.4,-.3) {$v_2$};
\node[state,inner sep=1pt,minimum size=0pt] (q_3) at (1.2,-1.3) {$v_3$};
\node[state,inner sep=1pt,minimum size=0pt] (q_4) at (-1,-1) {$v_4$};
\node[state,inner sep=1pt,minimum size=0pt] (q_5) at (3,1) {$v_5$};
\node[state,inner sep=1pt,minimum size=0pt] (q_6) at (0,-2) {$v_6$};

\path[-] (q_0) edge node[swap] {} (q_1);
\path[-] (q_0) edge node[swap] {} (q_3);
\path[-] (q_0) edge node[swap] {} (q_4);
\path[-] (q_1) edge node[swap] {} (q_2);
\path[-, red, line width=1pt] (q_0) edge node[swap] {} (q_2);
\path[-] (q_1) edge node[swap] {} (q_5);
\path[-, red, line width=1pt] (q_0) edge node[swap] {} (q_6);
\path[-] (q_4) edge node[swap] {} (q_6);
\path[-] (q_3) edge node[swap] {} (q_6);
\path[-] (q_2) edge node[swap] {} (q_3);
\path[-] (q_2) edge node[swap] {} (q_5);

\path[-, bend right = 30] (q_1) edge node {} (q_4);
\path[-, bend left = 70] (q_4) edge node {} (q_5);
\path[-, bend left = 50] (q_2) edge node {} (q_6);
\path[-, bend left = 90] (q_5) edge node {} (q_6);
\end{tikzpicture}
\end{center}
The toric 3-fold $W_1$ is defined by the~simplicial fan $\Sigma_{W_1}$ in $\mathbb{R}^3$ determined by the~following data:
\begin{itemize}
\item the~list of primitive generators of rays of $\Sigma_{W_1}$ is
\begin{align*}
v_0&=(1,3,-1), & v_1&=(1,0,0), & v_2&=(0,0,1), & v_3&=(0,1,0), \\
v_4&=(0,0,-1), & v_5&=(0,-1,1), & v_6&=(-1,0,0);
\end{align*}
\item the~list of maximal cones of $\Sigma_{W_1}$ is
$$
[0,1,2], [0,2,3],  [0,3,6],  [0,4,6],  [0,1,4],  [1,4,5],  [1,2,5], [2,3,6], [4,5,6], [2,5,6].
$$
\end{itemize}
On the~3-fold $W_1$, we use the~same notations for the~transformed $L_u$, the~torus invariant divisors and curves as on $W_0$.
Since the~formula \eqref{eq:intersection} is valid on $W_1$, we get
$$
\aligned &T_0T_1T_2= T_0T_1T_4=T_0T_4T_6=\frac{1}{3}, \\
& T_0T_2T_3=T_0T_3T_6=T_1T_4T_5=T_1T_2T_5=T_2T_3T_6=T_4T_5T_6=T_2T_5T_6=1,\\
\endaligned
$$
and all other $T_iT_jT_k=0$ with distinct indices $i, j, k$.
Since we have the~same list of primitive generators of rays as on $\Sigma_{W_0}$, the~relations \eqref{eq:principal1} are valid on $W_1$.
This gives
\begin{align*}
L_uT_0T_1&=\frac{1}{3},   & L_uT_0T_2&=\frac{u-1}{3},  & L_uT_0T_3&=2-u,  & L_uT_0T_4&=\frac{1}{3},   & L_uT_0T_6&=\frac{u-1}{3}, \\
L_uT_1T_2&=\frac{4-u}{3}, & L_uT_1T_4&=\frac{6-u}{3},  & L_uT_1T_5&=1,    & L_uT_2T_3&=2-u,           & L_uT_2T_5&=1, \\
L_uT_2T_6&=1,             & L_uT_3T_6&=2-u,            & L_uT_4T_5&=1,    & L_uT_4T_6&=\frac{7-u}{3}, & L_uT_5T_6&=1.
\end{align*}
Therefore, the~divisor $L_u$ is nef for $1\leqslant u\leqslant 2$.

The~unique torus invariant surface $T_3$  that contains $\lfloor T_0T_3\rfloor $, $\lfloor T_2T_3\rfloor $, $\lfloor T_3T_6\rfloor $ is of Picard rank~$1$.
Since $(L_u-aT_3)T_0T_3=2-u+3a$ for any non-negative real number~$a$, the~Nakayama--Zariski decomposition $L_u=P(u)+N(u)$ for $u>2$ on the~3-fold $W_1$ must satisfy
$$
N(u)\geqslant \frac{u-2}{3}T_3,
$$
where $P(u)$ is the~positive part of the~decomposition, and $N(u)$ is the~negative part. Set
$$
P^1_u=L_u-\frac{u-2}{3}T_3=(7-u)T_0+T_1+T_2+\frac{8-u}{3}T_3.
$$
Then
\begin{align*}
P^1_uT_0T_1&=\frac{1}{3},   & P^1_uT_0T_2&=\frac{1}{3},   & P^1_uT_0T_3&=0, & P^1_uT_0T_4&=\frac{1}{3},   & P^1_uT_0T_6&=\frac{1}{3}, \\
P^1_uT_1T_2&=\frac{4-u}{3}, & P^1_uT_1T_4&=\frac{6-u}{3}, & P^1_uT_1T_5&=1, & P^1_uT_2T_3&=0,             & P^1_uT_2T_5&=1, \\
P^1_uT_2T_6&=\frac{5-u}{3}, & P^1_uT_3T_6&=0,             & P^1_uT_4T_5&=1, & P^1_uT_4T_6&=\frac{7-u}{3}, & P^1_uT_5T_6&=1.
\end{align*}
Therefore, if $2\leqslant u\leqslant 4$, then $P^1_u$ is nef, and hence $L_u=P^1_u+\frac{u-2}{3}T_3$ is the~Zariski decomposition,
i.e.~$P^1_u$ is the~positive part, and $\frac{u-2}{3}T_3$ is the~negative part.

For  small enough $u>4$, the~curve $\lfloor T_1T_2 \rfloor$ is the~only curve in $W_1$ that intersects $P^1_u$ negatively.
Let $W_1\dasharrow W_2$ be the~small birational map of this curve.
Then the~change of fans can be diagramed as follows:

\begin{center}
\begin{tikzpicture}[node distance=1cm,auto, scale=0.75]
\node[state, inner sep=1pt,minimum size=0pt] (q_0) {$v_0$};
\node[state, inner sep=1pt,minimum size=0pt] (q_1) at (1,1) {$v_1$};
\node[state,inner sep=1pt,minimum size=0pt] (q_2) at (2.4,-.3) {$v_2$};
\node[state,inner sep=1pt,minimum size=0pt] (q_3) at (1.2,-1.3) {$v_3$};
\node[state,inner sep=1pt,minimum size=0pt] (q_4) at (-1,-1) {$v_4$};
\node[state,inner sep=1pt,minimum size=0pt] (q_5) at (3,1) {$v_5$};
\node[state,inner sep=1pt,minimum size=0pt] (q_6) at (0,-2) {$v_6$};

\path[-] (q_0) edge node[swap] {} (q_1);
\path[-] (q_0) edge node[swap] {} (q_3);
\path[-] (q_0) edge node[swap] {} (q_4);
\path[-, blue, line width=1pt] (q_1) edge node[swap] {} (q_2);
\path[-] (q_0) edge node[swap] {} (q_2);
\path[-] (q_1) edge node[swap] {} (q_5);
\path[-] (q_0) edge node[swap] {} (q_6);
\path[-] (q_4) edge node[swap] {} (q_6);
\path[-] (q_3) edge node[swap] {} (q_6);
\path[-] (q_2) edge node[swap] {} (q_3);
\path[-] (q_2) edge node[swap] {} (q_5);

\path[-, bend right = 30] (q_1) edge node {} (q_4);
\path[-, bend left = 70] (q_4) edge node {} (q_5);
\path[-, bend left = 50] (q_2) edge node {} (q_6);
\path[-, bend left = 90] (q_5) edge node {} (q_6);

\node[state,inner sep=0pt,minimum size=0pt] (q_7) at (4,0) {};
\node[state,inner sep=0pt,minimum size=0pt] (q_8) at (5,0) {};
\path[->, dashed, line width=1pt] (q_7) edge node[swap] {} (q_8);

\end{tikzpicture}
\begin{tikzpicture}[node distance=1cm,auto, scale=0.75]
\node[state, inner sep=1pt,minimum size=0pt] (q_0) {$v_0$};
\node[state, inner sep=1pt,minimum size=0pt] (q_1) at (1,1) {$v_1$};
\node[state,inner sep=1pt,minimum size=0pt] (q_2) at (2.4,-.3) {$v_2$};
\node[state,inner sep=1pt,minimum size=0pt] (q_3) at (1.2,-1.3) {$v_3$};
\node[state,inner sep=1pt,minimum size=0pt] (q_4) at (-1,-1) {$v_4$};
\node[state,inner sep=1pt,minimum size=0pt] (q_5) at (3,1) {$v_5$};
\node[state,inner sep=1pt,minimum size=0pt] (q_6) at (0,-2) {$v_6$};

\path[-] (q_0) edge node[swap] {} (q_1);
\path[-] (q_0) edge node[swap] {} (q_3);
\path[-] (q_0) edge node[swap] {} (q_4);
\path[-, red, line width=1pt] (q_0) edge node[swap] {} (q_5);
\path[-] (q_0) edge node[swap] {} (q_2);
\path[-] (q_1) edge node[swap] {} (q_5);
\path[-] (q_0) edge node[swap] {} (q_6);
\path[-] (q_4) edge node[swap] {} (q_6);
\path[-] (q_3) edge node[swap] {} (q_6);
\path[-] (q_2) edge node[swap] {} (q_3);
\path[-] (q_2) edge node[swap] {} (q_5);

\path[-, bend right = 30] (q_1) edge node {} (q_4);
\path[-, bend left = 70] (q_4) edge node {} (q_5);
\path[-, bend left = 50] (q_2) edge node {} (q_6);
\path[-, bend left = 90] (q_5) edge node {} (q_6);
\end{tikzpicture}
\end{center}
The toric 3-fold $W_2$ is  defined by the~simplicial fan $\Sigma_{W_2}$ in $\mathbb{R}^3$ determined by the~following data:
\begin{itemize}
\item the~list of primitive generators of rays of $\Sigma_{W_2}$ is
\begin{align*}
v_0&=(1,3,-1), & v_1&=(1,0,0), & v_2&=(0,0,1), & v_3&=(0,1,0), \\
v_4&=(0,0,-1), & v_5&=(0,-1,1), & v_6&=(-1,0,0);
\end{align*}
\item the~list of maximal cones of $\Sigma_{W_2}$ is
$$
[0,1,5],  [0,2,5],   [0,2,3],   [0,3,6],  [ 0,4,6],  [0,1,4],  [1,4,5],  [2,3,6], [4,5,6],  [2,5,6].
$$
\end{itemize}

As before,  we keep the~same notations for the~transformed $L_u$ and $P^1_u$, the~torus invariant divisors and curves on $W_2$.
It follows from \eqref{eq:intersection} that
$$
\aligned
&T_0T_4T_6=T_0T_1T_4=\frac{1}{3}, \\
&T_0T_1T_5=\frac{1}{2}, \\
& T_0T_2T_5=T_0T_2T_3=T_0T_3T_6=T_1T_4T_5=T_2T_3T_6=T_4T_5T_6=T_2T_5T_6=1,\\
\endaligned
$$
and all other $T_iT_jT_k=0$ with distinct $i, j, k$. We have
$$
P_u^1=(7-u)T_0+T_1+T_2+\frac{8-u}{3}T_3,
$$
and we compute
\begin{align*}
P^1_uT_0T_1&=\frac{6-u}{6}, & P^1_uT_0T_2&=\frac{5-u}{3},   & P^1_uT_0T_3&=0,             & P^1_uT_0T_4&=\frac{1}{3},   & P^1_uT_0T_5&=\frac{u-4}{2}, \\
P^1_uT_0T_6&=\frac{1}{3},   & P^1_uT_1T_4&=\frac{6-u}{3},   & P^1_uT_1T_5&=\frac{6-u}{2}, & P^1_uT_2T_3&=0,             & P^1_uT_2T_5&=5-u, \\
P^1_uT_2T_6&=\frac{5-u}{3}, & P^1_uT_3T_6&=0,               & P^1_uT_4T_5&=1,             & P^1_uT_4T_6&=\frac{7-u}{3}, & P^1_uT_5T_6&=1.
\end{align*}
Hence, if $u\in [4,5]$, then $P^1_u$ is nef on $W_2$, so $L_u=P^1_u+\frac{u-2}{3}T_3$ is the~required Zariski decomposition.

Observe that $T_2$ is the~unique torus invariant surface that contains the~curves $T_0T_2$, $T_2T_5$, $T_2T_6$,
and $T_0T_2$ is nef on $T_2$, since $(T_0|_{T_2})^2=T_0^2T_2=0$.
For non-negative real numbers $a$ and $b$, we have
$$
\aligned
&(P_u^1-aT_2-bT_3)T_0T_2=\frac{5-u}{3}+a-b,\\
&(P_u^1-aT_2-bT_3)T_0T_3=-a+3b.
\endaligned
$$
These intersections are non-negative for $a\geqslant \frac{u-5}{2}$ and $b\geqslant \frac{u-5}{6}$.
Therefore, the~Nakayama-Zariski decomposition $L_u=P(u)+N(u)$ on $W_2$ satisfies
$$
N(u)\geqslant \frac{u-2}{3}T_3+\left(\frac{u-5}{2}T_2+\frac{u-5}{6}T_3\right)=\frac{u-5}{2}T_2+\frac{u-3}{2}T_3,
$$
where $P(u)$ stands for the~positive part, and $N(u)$ stands for the~negative part. Put
$$
P^2_u=P^1_u-\left(\frac{u-5}{2}T_2+\frac{u-5}{6}T_3\right).
$$
Then
\begin{align*}
P^2_uT_0T_1&=\frac{6-u}{6}, & P^2_uT_0T_2&=0,             & P^2_uT_0T_3&=0,             & P^2_uT_0T_4&=\frac{1}{3},   & P^2_uT_0T_5&=\frac{1}{2}, \\
P^2_uT_0T_6&=\frac{7-u}{6}, & P^2_uT_1T_4&=\frac{6-u}{3}, & P^2_uT_1T_5&=\frac{6-u}{2}, & P^2_uT_2T_3&=0,             & P^2_uT_2T_5&=0, \\
P^2_uT_2T_6&=0,             & P^2_uT_3T_6&=0,             & P^2_uT_4T_5&=1,             & P^2_uT_4T_6&=\frac{7-u}{3}, & P^2_uT_5T_6&=\frac{7-u}{2}.
\end{align*}
Hence, the~divisor $P_u^2$ is nef for $u\in[5,6]$, which implies that $P(u)=P_u^2$ and
$$
N(u)=\frac{u-5}{2}T_2+\frac{u-3}{2}T_3.
$$
This gives the~Zariski decomposition of the~divisor $L_u$ on the~3-fold $W_2$ for $u\in[5,6]$.

The surface $T_1$ is the~unique torus invariant surface that contains the~curves $T_0T_1$, $T_1T_4$, $T_1T_5$,
it has Picard rank $1$, and it is disjoint from $T_2$ and $T_3$. But
$$
(P^2_u-aT_1)T_0T_1=\frac{6-u}{6}+\frac{a}{6}.
$$
Therefore, the~Nakayama-Zariski decomposition $L_u=P(u)+N(u)$ on $W_2$ for $u>6$ satisfies
$$
N(u)\geqslant (u-6)T_1+\frac{u-5}{2}T_2+\frac{u-3}{2}T_3,
$$
where $P(u)$ is the~positive part, and $N(u)$ is the~negative part. Set $P_u^3=P^2_u-(u-6)T_1$. Then
\begin{align*}
P^3_uT_0T_1&=0,             & P^3_uT_0T_2&=0,  & P^3_uT_0T_3&=0,   & P^3_uT_0T_4&=\frac{7-u}{3}, & P^3_uT_0T_5&=\frac{7-u}{2}, \\
P^3_uT_0T_6&=\frac{7-u}{6}, & P^3_uT_1T_4&=0,  & P^3_uT_1T_5&=0,   & P^3_uT_2T_3&=0,             & P^3_uT_2T_5&=0, \\
P^3_uT_2T_6&=0,             & P^3_uT_3T_6&=0,  & P^3_uT_4T_5&=7-u, & P^3_uT_4T_6&=\frac{7-u}{3}, & P^3_uT_5T_6&=\frac{7-u}{2}.
\end{align*}
Then $P(u)=P^3_u$ is the~positive part of the~Zariski decomposition of $L_u$ on $W_2$ for $u\in[6,7]$,
and the~negative part is
$$
N(u)=(u-6)T_1+\frac{u-5}{2}T_2+\frac{u-3}{2}T_3.
$$
If $u>7$, then $L_u$ is not pseudoeffective.

\begin{remark}
\label{remark:projective}
The toric varieties $W_0$, $W_1$, $W_2$ are projective.
Indeed, the~variety $W_0$ is obtained by taking a weighted blowup of a projective variety.
On $W_1$, the~transformed $L_\frac{3}{2}$ is an ample divisor.
On $W_2$, we can obtain an ample divisor from $P^1_{\frac{9}{2}}+\frac{1}{m} T_2$ by taking sufficiently  large  integer $m$.
\end{remark}

To apply \cite{AbbanZhuang,Book,Fujita2021},
we must consider a common partial resolution of the~3-folds $W_0$, $W_1$, $W_2$.
Namely, let $\widetilde{W}$ be the~toric 3-fold defined by the~simplicial fan $\Sigma_{\widetilde{W}}$ in $\mathbb{R}^3$ given by
\begin{itemize}
\item the~list of primitive generators of rays of $\Sigma_{\widetilde{W}}$ is
\begin{align*}
v_0&=(1,3,-1), & v_1&=(1,0,0),  & v_2&=(0,0,1), & v_3&=(0,1,0), & v_4&=(0,0,-1), & v_5&=(0,-1,1), \\
v_6&=(-1,0,0), & v_7&=(0,3,-1), & v_8&=(1,3,0), & v_9&=(1,2,0), & v_{10}&=(1,0,2);
\end{align*}

\item the~list of maximal cones of $\Sigma_{\widetilde{W}}$ is
\begin{align*}
[0, 1,4]&, & [0,1,9]&, & [0,3,7]&, & [0,3,8]&, & [0,4,7]&, & [0,8,9]&, & [1,4,5]&, & [1,5,10]&, & [1,9,10], \\
[2,3,6]&, & [2,3,8]&, & [2,5,6]&, & [2,5,10]&, & [2,8,10]&, & [3,6,7]&, & [4,5,6]&, & [4,6,7]&, & [8,9,10].
\end{align*}
\end{itemize}
The fan $\Sigma_{\widetilde{W}}$ can be diagramed as follows:

\begin{center}
\begin{tikzpicture}[node distance=1cm,auto]
\node[state, inner sep=1pt,minimum size=0pt] (q_1) at (1,2) {$v_1$};
\node[state,inner sep=1pt,minimum size=0pt] (q_2) at (2.4,-.4) {$v_2$};
\node[state,inner sep=1pt,minimum size=0pt] (q_3) at (1.2,-1.3) {$v_3$};
\node[state,inner sep=1pt,minimum size=0pt] (q_4) at (-1.4,-.3) {$v_4$};
\node[state,inner sep=1pt,minimum size=0pt] (q_5) at (4,2) {$v_5$};
\node[state,inner sep=1pt,minimum size=0pt] (q_6) at (-.2,-1.6) {$v_6$};
\node[state,inner sep=1pt,minimum size=0pt] (q_7) at (-.1,-.8) {$v_{7}$};
\node[state,inner sep=1pt,minimum size=0pt] (q_8) at (1.2,-.2) {$v_{8}$};
\node[state, inner sep=1pt,minimum size=0pt] (q_0) {$v_0$};
\node[state,inner sep=1pt,minimum size=0pt] (q_9) at (1.1,.6) {$v_{9}$};
\node[state,inner sep=1pt,minimum size=0pt] (q_10) at (2.2,1.1) {$v_{10}$};
\node[state,inner sep=1pt,minimum size=0pt] (q_5) at (4,2) {$v_5$};

\path[-] (q_0) edge node[swap] {} (q_1);
\path[-] (q_0) edge node[swap] {} (q_3);
\path[-] (q_0) edge node[swap] {} (q_4);
\path[-] (q_1) edge node[swap] {} (q_10);
\path[-] (q_8) edge node[swap] {} (q_3);
\path[-] (q_1) edge node[swap] {} (q_5);
\path[-] (q_4) edge node[swap] {} (q_7);
\path[-] (q_3) edge node[swap] {} (q_7);
\path[-] (q_0) edge node[swap] {} (q_7);
\path[-] (q_6) edge node[swap] {} (q_7);
\path[-] (q_4) edge node[swap] {} (q_6);
\path[-] (q_3) edge node[swap] {} (q_6);
\path[-] (q_2) edge node[swap] {} (q_3);
\path[-] (q_2) edge node[swap] {} (q_5);
\path[-] (q_0) edge node[swap] {} (q_8);
\path[-] (q_2) edge node[swap] {} (q_8);

\path[-] (q_10) edge node[swap] {} (q_9);
\path[-] (q_0) edge node[swap] {} (q_9);
\path[-] (q_1) edge node[swap] {} (q_9);
\path[-] (q_8) edge node[swap] {} (q_9);

\path[-] (q_8) edge node[swap] {} (q_10);
\path[-] (q_2) edge node[swap] {} (q_10);
\path[-] (q_5) edge node[swap] {} (q_10);
%\path[-, bend left = 30] (q_0) edge node {} (q_1);

\path[-, bend right = 30] (q_1) edge node {} (q_4);
\path[-, bend left = 90] (q_4) edge node {} (q_5);
\path[-, bend left = 50] (q_2) edge node {} (q_6);
\path[-, bend left = 90] (q_5) edge node {} (q_6);
\end{tikzpicture}
\end{center}
Then there exists the~following commutative diagram:
$$
\xymatrix{
&&\widetilde{W}\ar@{->}[dll]_{\zeta_0}\ar@{->}[rrd]^{\zeta_2}\ar@{->}[d]_{\zeta_1}&&\\%
W_0\ar@{-->}[rr]\ar@{->}[d]_{\lambda}&&W_1\ar@{-->}[rr]&&W_2\\
Y&&&&}
$$
where $\zeta_0$, $\zeta_1$ and $\zeta_2$ are toric birational morphisms.

Let us denote by $\widetilde{T}_i$ the~torus invariant divisor on $\widetilde{W}$ corresponding to the~ray $v_i$ in the~fan $\Sigma_{\widetilde{W}}$.
Then the~formula \eqref{eq:intersection} implies that
\begin{equation}
\label{eq:intersection4}
\aligned
&\widetilde{T}_1\widetilde{T}_9\widetilde{T}_{10}=\frac{1}{4},  \\
& \widetilde{T}_0\widetilde{T}_1\widetilde{T}_4=\widetilde{T}_0\widetilde{T}_4\widetilde{T}_7=\widetilde{T}_2\widetilde{T}_8\widetilde{T}_{10}=\widetilde{T}_4\widetilde{T}_6\widetilde{T}_7=\frac{1}{3}, \\
& \widetilde{T}_0\widetilde{T}_1\widetilde{T}_9=\widetilde{T}_1\widetilde{T}_5\widetilde{T}_{10}=\widetilde{T}_8\widetilde{T}_9\widetilde{T}_{10}=\frac{1}{2}, \\
& \widetilde{T}_0\widetilde{T}_3\widetilde{T}_7=\widetilde{T}_0\widetilde{T}_3\widetilde{T}_8=\widetilde{T}_0\widetilde{T}_8\widetilde{T}_9=\widetilde{T}_1\widetilde{T}_4\widetilde{T}_5=\widetilde{T}_2\widetilde{T}_3\widetilde{T}_6=1,\\
&\widetilde{T}_2\widetilde{T}_3\widetilde{T}_8=\widetilde{T}_2\widetilde{T}_5\widetilde{T}_6=\widetilde{T}_2\widetilde{T}_5\widetilde{T}_{10}=
\widetilde{T}_3\widetilde{T}_6\widetilde{T}_7=\widetilde{T}_4\widetilde{T}_5\widetilde{T}_6=1,\\
\endaligned
\end{equation}
and other $\widetilde{T}_i\widetilde{T}_j\widetilde{T}_k$ with distinct indices $i, j, k$ are $0$.
Further, the~characters $\chi_1$, $\chi_2$, $\chi_3$ corresponding to the~lattice points $(1,0,0)$, $(0,1,0)$, $(0,0,1)$ in the~dual lattice
yield the~following relations:
\begin{equation}
\label{eq:principal2}
\aligned
&0\sim \mathrm{div}(\chi_1)=\widetilde{T}_0+\widetilde{T}_1-\widetilde{T}_6+\widetilde{T}_8+\widetilde{T}_9+\widetilde{T}_{10}, \\
&0\sim \mathrm{div}(\chi_2)=3\widetilde{T}_0+\widetilde{T}_3-\widetilde{T}_5+3\widetilde{T}_7+3\widetilde{T}_8+2\widetilde{T}_{9}, \\
&0\sim \mathrm{div}(\chi_3)=-\widetilde{T}_0+\widetilde{T}_2-\widetilde{T}_4+\widetilde{T}_5-\widetilde{T}_7+2\widetilde{T}_{10}.
\endaligned
\end{equation}
Moreover, we have
\begin{equation*}
\begin{array}{ll}
\zeta^*_0(T_0)=\widetilde{T}_0, &  \zeta^*_0(T_1)=\widetilde{T}_1+\widetilde{T}_8+\widetilde{T}_9+\widetilde{T}_{10},\\
\zeta^*_0(T_2)=\widetilde{T}_2+2\widetilde{T}_{10}, & \zeta^*_0(T_3)=\widetilde{T}_3+3\widetilde{T}_7+3\widetilde{T}_8+2\widetilde{T}_{9},\\
\zeta^*_1(T_0)=\widetilde{T}_0+\widetilde{T}_7+\widetilde{T}_8+\frac{2}{3}\widetilde{T}_{9}, &  \zeta^*_1(T_1)=\widetilde{T}_1+\frac{1}{3}\widetilde{T}_9+\widetilde{T}_{10},\\
\zeta^*_1(T_2)=\widetilde{T}_2+\widetilde{T}_8+\frac{2}{3}\widetilde{T}_{9}+2\widetilde{T}_{10}, & \zeta^*_1(T_3)=\widetilde{T}_3,\\
\zeta^*_2(T_0)=\widetilde{T}_0+\widetilde{T}_7+\widetilde{T}_8+\widetilde{T}_9+\widetilde{T}_{10}, &  \zeta^*_2(T_1)=\widetilde{T}_1,\\
\zeta^*_2(T_2)=\widetilde{T}_2+\widetilde{T}_{8}, & \zeta^*_2(T_3)=\widetilde{T}_3.
\end{array}
\end{equation*}
Let us briefly explain how we get these expressions.
For instance, the~divisor $T_0$ on $W_0$ does not contain centers of $\zeta_0$-exceptional surfaces, so $\zeta^*_0(T_0)=\widetilde{T}_0$.
Similarly, the~divisor $T_0$ on $W_2$ contains centers of the~following $\zeta_2$-exceptional divisors:
$\widetilde{T}_7$, $\widetilde{T}_8$, $\widetilde{T}_9$, $\widetilde{T}_{10}$,
which implies that
$$
\zeta_2^*(T_0)=\widetilde{T}_0+a_7\widetilde{T}_7+a_8\widetilde{T}_8+a_9\widetilde{T}_9+a_{10}\widetilde{T}_{10}
$$
for some positive rational numbers $a_7$, $a_8$, $a_9$, $a_{10}$.
Then we obtain
$$
\aligned
&0=\left(\widetilde{T}_0+a_7\widetilde{T}_7+a_8\widetilde{T}_8+a_9\widetilde{T}_9+a_{10}\widetilde{T}_{10}\right)\widetilde{T}_3\widetilde{T}_7 =1-a_7,\\
&0=\left(\widetilde{T}_0+a_7\widetilde{T}_7+a_8\widetilde{T}_8+a_9\widetilde{T}_9+a_{10}\widetilde{T}_{10}\right)\widetilde{T}_3\widetilde{T}_8 =1-a_8,\\
&0=\left(\widetilde{T}_0+a_7\widetilde{T}_7+a_8\widetilde{T}_8+a_9\widetilde{T}_9+a_{10}\widetilde{T}_{10}\right)\widetilde{T}_8\widetilde{T}_{10} =-\frac{1}{3}a_8+\frac{1}{2}a_9-\frac{1}{6}a_{10},\\
&0=\left(\widetilde{T}_0+a_7\widetilde{T}_7+a_8\widetilde{T}_8+a_9\widetilde{T}_9+a_{10}\widetilde{T}_{10}\right)\widetilde{T}_1\widetilde{T}_{10}=\frac{1}{4}a_8-\frac{1}{4}a_{10},
\endaligned
$$
which gives $a_7=a_8=a_9=a_{10}=1$.
Here, all intersections are derived from \eqref{eq:intersection4} and \eqref{eq:principal2}.

For every $u\in[0,7]$, the~Zariski decomposition of the~divisor $\zeta_0^*(L_u)$ exists on the~3-fold~$\widetilde{W}$.
Let~$P_{\widetilde{W}}(u)$ and $N_{\widetilde{W}}(u)$ be its positive and negative parts, respectively.
Then their expressions as linear combinations of the~torus invariant divisors on $\widetilde{W}$ are given in Table~\ref{table:ZDu}.

We now consider the~toric surface $\widetilde{T}_0$.
Its fan is the~image of the~fan $\Sigma_{\widetilde{W}}$ under the~quotient lattice homomorphism $\mathbb{Z}^3\to\mathbb{Z}^3/\mathbb{Z}v_0\cong \mathbb{Z}^2$.
We may assume that $v_1\mapsto w_1=(1,0)$ and $v_3\mapsto w_4=(0,1)$, which determines the~quotient homomorphism.
Then the~list of primitive generators of the~rays in the~fan consists of
$$
w_1=(1,0), w_2=(1,2), w_3=(1,3), w_4=(0,1), w_5=(-1,0), w_6=(-1,-3).
$$

Let $\zeta$ be the~restriction morphism $\zeta_0|_{\widetilde{T}_0}\colon \widetilde{T}_0\to T_0$.
Then $\zeta$ contracts the~torus invariant curves defined by $w_5$, $w_3$, $w_2$,
since $\zeta_0$ contracts $\lfloor\widetilde{T}_0\widetilde{T}_7\rfloor$,~$\lfloor\widetilde{T}_0\widetilde{T}_8\rfloor$,~$\lfloor\widetilde{T}_0\widetilde{T}_9\rfloor$.
This can be illustrated as follows.
%\begin{figure}[h!]
 \begin{center}
\begin{tikzpicture}[>=latex]
    \begin{scope}
%  \clip (1.8,1) rectangle (16.5cm,16.8cm); % Clips the~picture...
     \draw[style=help lines,dashed] (2.5,1) grid[step=.7cm] (6,7.5); % Draws a grid in the~new coordinates.

    \foreach \x in {4,5,...,8}{                           % Two indices running over each
        \foreach \y in {2,3,...,10}{                       % node on the~grid we have drawn
        \node[draw,circle,inner sep=1pt,fill] at (.7*\x,.7*\y) {}; % Places a dot at those points
 }
}
                  \draw[thick, red,-] (4.2,4.2) -- (4.9,6.3)  node [above ] {$w_3$};
         \draw[thick, red,-] (4.2,4.2) -- (4.2,4.9)  node [above ] {$w_4$};
\draw[thick, red,-] (4.2,4.2) -- (4.9,5.6)  node [right] {$w_2$};
\draw[thick, red,-] (4.2,4.2) -- (4.9,4.2)  node [right] {$w_1$};
\draw[thick, red,-] (4.2,4.2) -- (3.5,2.1) node [below] {$w_6$};
\draw[thick, red,-] (4.2,4.2) -- (3.5,4.2) node [left] {$w_5$};
\draw[style=help lines,dashed] (11,1) grid[step=.7cm] (14.5,7.5);
\foreach \x in {16,17,...,20}{                           % Two indices running over each
        \foreach \y in {2,3,...,10}{                       % node on the~grid we have drawn
        \node[draw,circle,inner sep=1pt,fill] at (.7*\x,.7*\y) {}; % Places a dot at those points

        }
    }
 \draw[thick, red,-] (12.6,4.2) -- (12.6,4.9)  node [above ] {$w_4$};
\draw[thick, red,-] (12.6,4.2) -- (13.3,4.2)  node [right] {$w_1$};
\draw[thick, red,-] (12.6,4.2) -- (11.9,2.1) node [below] {$w_6$};
\draw[->] (7.7,4.2) -- (8.5,4.2) node [above left] {};
        \node (O) at (8.1,4.5) {$\zeta$};
        \node (O) at (4.2,7.5) {$\widetilde{T}_0$};
           \node (O) at (12.6,7.5) {$T_0$};
    \end{scope}
\end{tikzpicture}
\end{center}

Let $\alpha_1,\ldots,\alpha_6$ be the~torus invariant curves in $\widetilde{T}_0$ defined by the~rays $w_1,\ldots,w_6$, respectively.
Set $\overline{\alpha}_1=\zeta(\alpha_1)$, $\overline{\alpha}_4=\zeta(\alpha_4)$, $\overline{\alpha}_6=\zeta(\alpha_6)$.
Note that $\alpha_1+\alpha_2+\alpha_3=\alpha_5+\alpha_6$ and $2\alpha_2+3\alpha_3+\alpha_4=3\alpha_6$.
With these relations, \cite[\S 6.4]{CLS} yields the~following intersection matrix:
\begin{equation*}
A:=\left(\alpha_i\alpha_j\right)=
\begin{pmatrix}[1.4]
  -\frac{1}{6} & \frac{1}{2}& 0 &0 &0 &\frac{1}{3}\\
   \frac{1}{2} & -\frac{3}{2}& 1&0 &0& 0\\
    0 & 1& -1 &1&0&0\\
    0& 0& 1 &-3&1&0\\
     0 & 0& 0& 1 &-\frac{1}{3}&\frac{1}{3}\\
     \frac{1}{3} & 0&0&0& \frac{1}{3} &0\\
\end{pmatrix}.
\end{equation*}

It follows from \cite[Lemma~12.5.2]{CLS}  that
$$
\widetilde{T}_1\big|_{\widetilde{T}_0}=\alpha_1, \widetilde{T}_3\big|_{\widetilde{T}_0}=\alpha_4, \widetilde{T}_4\big|_{\widetilde{T}_0}=\alpha_6,
\widetilde{T}_7\big|_{\widetilde{T}_0}=\alpha_5,  \widetilde{T}_8\big|_{\widetilde{T}_0}=\alpha_3,  \widetilde{T}_9\big|_{\widetilde{T}_0}=\alpha_2.
$$
Moreover, \eqref{eq:principal2} implies
$$
\widetilde{T}_0\big|_{\widetilde{T}_0}=-\big(\widetilde{T}_1-\widetilde{T}_6+\widetilde{T}_8+\widetilde{T}_9+\widetilde{T}_{10}\big)\big|_{\widetilde{T}_0}=-\big(\alpha_1+\alpha_2+\alpha_3\big).
$$

Set $\widetilde{P}(u)=P_{\widetilde{W}}(u)|_{\widetilde{T}_0}$ and $\widetilde{N}(u)=N_{\widetilde{W}}(u)|_{\widetilde{T}_0}$.
Then we can express $\widetilde{P}(u)$ and $\widetilde{N}(u)$
as linear combinations of the~curves $\alpha_1$, $\alpha_2$, $\alpha_3$, $\alpha_4$, $\alpha_5$, $\alpha_6$.
These expressions are presented in Table~\ref{table:widetildePNu}.

We are ready to apply \cite{AbbanZhuang,Book,Fujita2021} to estimate $\delta_P(Y,\Delta_Y)$ from below.
Let $Q$ be a point in~$G=T_0$, let $C$ be a smooth curve in $G$ such that $Q\in C\not\subset\Delta_G$,
and let $\widetilde{C}$ be its proper transform on $\widetilde{T}_0$. Then $\zeta$ induces an isomorphism $\widetilde{C}\cong C$.
For every $u\in[0,7]$, let
$$
t(u)=\inf\Big\{v\in \mathbb R_{\geqslant 0} \ \big|\ \text{$\widetilde{P}(u)-vC$ is pseudoeffective}\Big\}.
$$
For every $v\in[0,t(u)]$, let $P(u,v)$ be the~positive part of the~Zariski decomposition of $\widetilde{P}(u)-vC$,
and let $N(u,v)$ be its negative part. Set
$$
S_L\big(W^{G}_{\bullet,\bullet};C\big)=\frac{3}{L^3}\int\limits_0^{7}\big(\widetilde{P}(u)\big)^2\mathrm{ord}_C\big(\widetilde{N}(u)\big)du+\frac{3}{L^3}\int\limits_0^{7}\int\limits_0^{t(u)}\big(P(u,v)\big)^2dvdu.
$$
Now, we write $\zeta^*(C)=\widetilde{C}+\Sigma$ for an effective $\mathbb{R}$-divisor $\Sigma$ on the~surface $\widetilde{T}_0$.
For every $u\in[0,7]$, write $\widetilde{N}(u)=d(u)C+N^\prime(u)$,
where $d(u)=\mathrm{ord}_C(\widetilde{N}(u))$, and $N^\prime(u)$ is an effective $\mathbb{R}$-divisor~on~$\widetilde{T}_0$.
Now, as in \cite[Definition~4.16]{Fujita2019b}, we set
$$
F_Q\big(W_{\bullet, \bullet,\bullet}^{G,C}\big)=\frac{6}{L^3}\int\limits_0^{7}\int\limits_0^{t(u)}\big(P(u,v)\cdot\widetilde{C}\big)\cdot\mathrm{ord}_Q\Big(\big(N^\prime(u)+N(u,v)-(v+d(u))\Sigma\big)\big|_{\widetilde{C}}\Big)dvdu,
$$
where we consider $Q$ as a point in $\widetilde{C}$ using the~isomorphism $\widetilde{C}\cong C$ induced by $\zeta$.
Finally, we set
$$
S\big(W_{\bullet, \bullet,\bullet}^{G,C};Q\big)=\frac{3}{L^3}
\int\limits_0^{7}\int\limits_0^{t(u)}\big(P(u,v)\cdot\widetilde{C}\big)^2dvdu+F_Q\big(W_{\bullet, \bullet,\bullet}^{G,C}\big).
$$

We have $(K_{G}+C+\Delta_G)\vert_{C}\sim_{\mathbb{R}}K_C+\Delta_C$ for an~effective divisor $\Delta_C$ known as the~different \cite{Prokhorov},
which can be computed locally near any point in $C$.
Using \cite[Corollary 4.18]{Fujita2021}, we obtain
$$
\delta_P(Y,\Delta_Y)\geqslant\min\left\{
\frac{A_{Y,\Delta_Y}(G)}{S_{L}(G)},
\inf_{Q\in G}
\min\left\{\frac{A_{G,\Delta_G}(C)}{S_L(W^{G}_{\bullet,\bullet};C)},
\frac{A_{C,\Delta_C}(Q)}{S(W_{\bullet,\bullet,\bullet}^{G,C};Q)}\right\}\right\},
$$
where $A_{G,\Delta_G}(C)=1$, because  $C\not\subset\Delta_G$ by assumption.
On the~other hand, we assumed that there exists a prime divisor $\mathbf{F}$ over $Y$ such that $\beta_{Y,\Delta_Y}(\mathbf{F})\leqslant 0$.
Moreover, we proved that $C_Y(\mathbf{F})=P$, so
$$
1\geqslant\frac{A_{Y,\Delta_Y}(\mathbf{F})}{S_{L}(\mathbf{F})}\geqslant
\delta_P(Y,\Delta_Y)\geqslant\min\left\{
\frac{A_{Y,\Delta_Y}(G)}{S_{L}(G)},
\inf_{Q\in G}
\min\left\{\frac{A_{G,\Delta_G}(C)}{S_L(W^{G}_{\bullet,\bullet};C)},
\frac{A_{C,\Delta_C}(Q)}{S(W_{\bullet,\bullet,\bullet}^{G,C};Q)}\right\}\right\}.
$$
Therefore, since $\frac{A_{Y,\Delta_Y}(G)}{S_{L}(G)}=\frac{63}{58}$,
it follows from \cite[Corollary 4.18]{Fujita2021} and \cite[Theorem 3.3]{AbbanZhuang} that
$$
\inf_{Q\in G}
\min\left\{\frac{A_{G,\Delta_G}(C)}{S_L(W^{G}_{\bullet,\bullet};C)},
\frac{A_{C,\Delta_C}(Q)}{S(W_{\bullet,\bullet,\bullet}^{G,C};Q)}\right\}<1.
$$
Therefore, to exclude the~case ($\mathbb{D}_4$),
it is enough to show that for every point $Q\in G$, there exists a smooth irreducible curve $C\subset G$ such that $Q\in C\not\subset\Delta_G$
and
\begin{equation}
\label{equation:3-4-D4-case}
S_L\big(W^{G}_{\bullet,\bullet};C\big)\leqslant 1\leqslant\frac{A_{C,\Delta_C}(Q)}{S(W_{\bullet,\bullet,\bullet}^{G,C};Q)}
\end{equation}
This is what we will do in the~rest of this section.

Let $Q$ be a~point in $G=T_0\cong\mathbb{P}(1,3,1)$.
Recall that $\overline{\alpha}_1$, $\overline{\alpha}_4$, $\overline{\alpha}_6$ are all torus invariant curves~in~$G$.
Let $Q_{14}=\overline{\alpha}_1\cap \overline{\alpha}_4$, $Q_{16}=\overline{\alpha}_1\cap \overline{\alpha}_6$,
$Q_{46}=\overline{\alpha}_4\cap \overline{\alpha}_6$, where $Q_{16}$ is the~singular point of the~surface~$G$.
Recall that $R_G$ meets the~curve $\overline{\alpha}_4$ transversally at three distinct points including $Q_{14}$~and~$Q_{46}$.
Let us denote by $Q_4$ the~point in $R_G\cap\overline{\alpha}_4$ that is different from $Q_{14}$ and $Q_{46}$.

Now, let us choose the~curve $C$.
If $Q\in\overline{\alpha}_1\cup\overline{\alpha}_4\cup\overline{\alpha}_6$, we choose $C$ as follows:
\begin{itemize}
\item if $Q\in\overline{\alpha}_1$, $Q\ne Q_{14}$, $Q\ne Q_{16}$, we let $C=\overline{\alpha}_1$,
\item if $Q\in\overline{\alpha}_4$, $Q\ne Q_{14}$, $Q\ne Q_{46}$, we let $C=\overline{\alpha}_4$,
\item if $Q\in\overline{\alpha}_6$, $Q\ne Q_{16}$, $Q\ne Q_{46}$, we let $C=\overline{\alpha}_6$,
\item if $Q=Q_{14}$, we let $C=\overline{\alpha}_1$ or $C=\overline{\alpha}_4$,
\item if $Q=Q_{16}$, we let $C=\overline{\alpha}_1$ or $C=\overline{\alpha}_6$,
\item if $Q=Q_{46}$, we let $C=\overline{\alpha}_4$ or $C=\overline{\alpha}_6$.
\end{itemize}
Similarly, if $Q\not\in\overline{\alpha}_1\cup\overline{\alpha}_4\cup\overline{\alpha}_6$,
there exists a unique curve $\overline{\alpha}_0\in|\mathcal{O}_G(1)|$ such that $\overline{\alpha}_0$ contains~$Q$.
In~this case, we let $C=\overline{\alpha}_0$, and we let $\alpha_0$ be the~proper transform of the~curve $\overline{\alpha}_0$ on the~surface~$\widetilde{T}_0$.
Then the~divisor $\Sigma$ and the~different $\Delta_C$ can be described as follows:
\begin{itemize}
\item[($\overline{\alpha}_1$)] if $C=\overline{\alpha}_1$, then $\Sigma=\alpha_2+\alpha_3$ and $\Delta_C=\frac{2}{3}Q_{16}+\frac{1}{2}Q_{14}$,
\item[($\overline{\alpha}_4$)] if $C=\overline{\alpha}_4$, then $\Sigma=2\alpha_2+3\alpha_3+3\alpha_5$ and $\Delta_C=\frac{1}{2}Q_{14}+\frac{1}{2}Q_{46}+\frac{1}{2}Q_4$,
\item[($\overline{\alpha}_6$)] if $C=\overline{\alpha}_6$, then $\Sigma=\alpha_5$ and $\Delta_C=\frac{1}{2}Q_{46}+\frac{2}{3}Q_{16}$,
\item[($\overline{\alpha}_0$)] if $C=\overline{\alpha}_0$, then $\Sigma=0$ and $\Delta_C=\Delta_{G}\vert_{C}+\frac{2}{3}Q_{16}$.
\end{itemize}
In the~last case,  we have $\mathrm{ord}_Q(\Delta_C)\leqslant\frac{1}{2}$, because the~curves $\overline{\alpha}_0$ and $R_G$ meet transversally.

In each possible case, we compute $t(u)$ as follows in Table~\ref{table:t-u-1}.

For each $u\in[0,7]$ and $v\in[0,t(u)]$, we can express
the~divisors $P(u,v)$ and $N(u,v)$ as linear combinations of the~curves
$\alpha_1$, $\alpha_2$, $\alpha_3$, $\alpha_4$, $\alpha_5$, $\alpha_6$.
These expressions are listed in Tables~\ref{table:ZDuv1},~\ref{table:ZDuv4},~\ref{table:ZDuv6},~\ref{table:ZDuv0}.

We now regard the~divisor $P(u,v)$  as a row vector $\mathbf{p}(u,v)\in\mathbb{R}^6$ defined as
$$
\mathbf{p}(u,v)=\big(c_{1}(u,v), c_{2}(u,v), c_{3}(u,v), c_{4}(u,v), c_{5}(u,v), c_{6}(u,v)\big),
$$
where $P(u,v)=c_{1}(u,v)\alpha_1+c_{2}(u,v)\alpha_2+c_{3}(u,v)\alpha_3+c_{4}(u,v)\alpha_4+c_{5}(u,v)\alpha_5+c_{6}(u,v)\alpha_6$. Then
$$
\big(P(u,v)\big)^2=\mathbf{p}(u,v)A\mathbf{p}(u,v)^T.
$$
Thus, we have
$$
S_L\big(W^{G}_{\bullet,\bullet};C\big)=\frac{3}{9}\int\limits_{0}^7\mathbf{p}(u,0)A\mathbf{p}(u,0)^T\cdot d(u)du+\frac{3}{9}\int\limits_{0}^7\int\limits_0^{t(u)}\mathbf{p}(u,v)A\mathbf{p}(u,v)^Tdvdu.
$$
Now, integrating we get
$$
S_L\big(W^{G}_{\bullet,\bullet};C\big)=
\left\{\aligned
&\frac{1}{2} \ \text{ if } C=\overline{\alpha}_1, \\
&\frac{7}{9} \ \text{ if } C=\overline{\alpha}_4, \\
&\frac{4}{9} \ \text{ if } C=\overline{\alpha}_6, \\
&\frac{11}{36} \ \text{ if } C=\overline{\alpha}_0.
\endaligned
\right.
$$
In each case, we have $S_L(W^{G}_{\bullet,\bullet};C)<1$ as required for \eqref{equation:3-4-D4-case}.

To present a formula for $S_L(W^{G,C}_{\bullet,\bullet,\bullet};Q)$,
let $\boldsymbol{e}_1$, $\boldsymbol{e}_2$, $\boldsymbol{e}_3$, $\boldsymbol{e}_4$, $\boldsymbol{e}_5$, $\boldsymbol{e}_6$
be the~standard basis for $\mathbb{R}^6$, and let $\boldsymbol{e}_0=\boldsymbol{e}_1+\boldsymbol{e}_2+\boldsymbol{e}_3$.
If $C=\overline{\alpha}_i$ for $i\in\{1,4,6,0\}$, then
$$
S_L\big(W^{G,C}_{\bullet,\bullet,\bullet};Q\big)=
\frac{3}{9}\int\limits_{0}^{7}\int\limits_0^{t(u)}\Big(\mathbf{p}(u,v)A\boldsymbol{e}_i^T\Big)^2dvdu+F_Q\big(W_{\bullet, \bullet,\bullet}^{G,C}\big),
$$
where
$$
F_Q\big(W_{\bullet, \bullet,\bullet}^{G,C}\big)=\frac{6}{9}\int\limits_{0}^7\int\limits_0^{t(u)}\Big(\mathbf{p}(u,v)A\boldsymbol{e}_i^T\Big)\cdot\mathrm{ord}_{Q}\Big(\big(N^\prime(u)+N(u,v)-(v+d(u))\Sigma\big)\big|_{\widetilde{C}}dvdu.
$$
In particular, if $Q\not\in\overline{\alpha}_1\cup\overline{\alpha}_4\cup\overline{\alpha}_6$,
then $C=\overline{\alpha}_0$, so that
$$
S_L\big(W^{G,C}_{\bullet,\bullet,\bullet};Q\big)=\frac{3}{9}\int\limits_{0}^7\int\limits_0^{t(u)}
\Big(\mathbf{p}(u,v)A\boldsymbol{e}_0^T\Big)^2dvdu=\frac{5}{24}<\frac{1}{2}\leqslant 1-\mathrm{ord}_{Q}(\Delta_{C})=A_{C,\Delta_{C}}(Q),
$$
which gives \eqref{equation:3-4-D4-case}. Similarly, if $Q\in\overline{\alpha}_1$ and $C=\overline{\alpha}_1$, then
$$
S_L\big(W^{G,C}_{\bullet,\bullet,\bullet};Q\big)=\frac{3}{9}\int\limits_{0}^7\int\limits_0^{t(u)}\Big(\mathbf{p}(u,v)A\boldsymbol{e}_1^T\Big)^2dvdu+F_Q\big(W_{\bullet, \bullet,\bullet}^{G,C}\big)=\frac{4}{27}+F_Q\big(W_{\bullet, \bullet,\bullet}^{G,C}\big)=
\left\{\aligned
&\frac{83}{108}\ \text{ if }\ Q=Q_{14},\\
&\frac{4}{27}\ \text{ if }\ Q\ne Q_{14},
\endaligned\right.
$$
while $\Delta_{C}=\frac{2}{3}Q_{16}+\frac{1}{2}Q_{14}$.
This gives \eqref{equation:3-4-D4-case} for $Q\in\overline{\alpha}_1\setminus\{Q_{14}\}$.
If $Q\in\overline{\alpha}_6\setminus\{Q_{16}\}$ and $C=\overline{\alpha}_6$,~then
$$
S_L\big(W^{G,C}_{\bullet,\bullet,\bullet};Q\big)=\frac{3}{9}\int\limits_{0}^7\int\limits_0^{t(u)}
\Big(\mathbf{p}(u,v)A\boldsymbol{e}_6^T\Big)^2dvdu+F_Q\big(W_{\bullet, \bullet,\bullet}^{G,C}\big)=
=\left\{\aligned
&\frac{126}{162}\ \text{ if } Q=Q_{46},\\
&\frac{25}{162}\ \text{ if } Q\ne Q_{46},
\endaligned \right.
$$
while $\Delta_{C}=\frac{1}{2}Q_{46}+\frac{2}{3}Q_{16}$, which gives \eqref{equation:3-4-D4-case} for $Q\in\overline{\alpha}_6\setminus\{Q_{46}, Q_{16}\}$.
If $Q\in\overline{\alpha}_4$ and $C=\overline{\alpha}_4$, then
$$
S_L\big(W^{G,C}_{\bullet,\bullet,\bullet};Q\big)=\frac{3}{9}\int\limits_{0}^7\int\limits_0^{t(u)}
\Big(\mathbf{p}(u,v)A\boldsymbol{e}_4^T\Big)^2dvdu+F_Q\big(W_{\bullet, \bullet,\bullet}^{G,C}\big)=
\left\{\aligned
&\frac{1}{2}\ \text{ if } Q=Q_{46},\\
&\frac{8}{18}\ \text{ if } Q=Q_{14},\\
&\frac{11}{36}\ \text{ if } Q\ne Q_{46}\ \text{ and }\ Q\ne Q_{14},
\endaligned \right.
$$
while $\Delta_{C}=\frac{1}{2}Q_{14}+\frac{1}{2}Q_{46}+\frac{1}{2}Q_4$.
This gives \eqref{equation:3-4-D4-case} for $Q\in\overline{\alpha}_4$.

Therefore, we see that \eqref{equation:3-4-D4-case} holds for every $Q\in G$ for an appropriate choice of the~curve $C$,
which excludes the~case ($\mathbb{D}_4$) as we explained earlier.

\subsection{Exclusion of the~case ($\mathbb{A}_3$)}
\label{subsection:3-4-step-3}

Let us finish the~proof of Theorem~\ref{theorem:3-4-K-stable}.
Now, we assume~that the~surface $R$ is given by the~equation \eqref{equation:R-simplified} with $a_2\ne 0$.
In the~chart $\mathbb{A}^3_{x,y,z}=\{x_0y_0z_0\ne 0\}$ with coordinates $x=\frac{x_1}{x_0}$, $y=\frac{y_1}{y_0}$, $z=\frac{z_1}{z_0}$,
we have $P=(0,0,0)$, and the~surface $R$ is given by
$$
y+xz^2+a_2x^2+\big(e_0yz+d_2x^2z+b_1xy+e_1xyz+c_0y^2+b_2x^2y+e_2x^2yz+c_1xy^2+c_2x^2y^2\big)=0,
$$
where $y+xz^2+a_2x^2$ is the~smallest degree term for the~weights $\mathrm{wt}(x)=2$, $\mathrm{wt}(y)=4$, $\mathrm{wt}(z)=1$.
Let $\lambda\colon W_0\to Y$ be the~corresponding weighted blow up of the~point $P$ with weights $(2,4,1)$,
and let $G$ be the~$\lambda$-exceptional surface. Then~$G\cong\mathbb{P}(1,2,1)$,
and we can also consider $(x,y,z)$ as global coordinates on $G$ with $\mathrm{wt}(x)=1$,
$\mathrm{wt}(y)=2$, $\mathrm{wt}(z)=1$.

Let  $R_{W_0}$, $F_{W_0}$ and $S_{W_0}$ be the~proper transforms on $W_0$ of the~surfaces $R$, $S$ and $F$, respectively.
Set $R_G=R_{W_0}\vert_{G}$, let $n_G$ be the~curve $\{z=0\}\subset G$, set $\Delta_G=\frac{1}{2}R_G+\frac{1}{2}n_G$ and $\Delta_{W_0}=\frac{1}{2}R_{W_0}$.
Then
$$
\big(K_{W_0}+\Delta_{W_0}+G\big)\big|_G\sim_{\mathbb{Q}}K_G+\Delta_G.
$$
Note that $F_{W_0}\vert_{G}=\{x=0\}$, $S_{W_0}\vert_{G}=\{y=0\}$ and $R_G=\{y+xz+a_2x^2=0\}$.

The remaining part of this subsection is very similar to what has been done in Section~\ref{subsection:3-4-step-2}, so we will omit some details here.
We have $A_{Y,\Delta_Y}(G)=5$. Using \cite[Corollary~7.7]{BlumJonsson}, we get $S_{Y,\Delta_Y}(G)=\frac{41}{9}$.

Both 3-folds $Y$ and $W_0$ are toric, and the~weighted blow up $\lambda$ is also toric.
Let $\Sigma_{Y}$ and $\Sigma_{W_0}$ be the~fans of the~3-folds $Y$ and $W_0$, respectively.
Then the~fan $\Sigma_{Y}$ is presented in Section~\ref{subsection:3-4-step-2},
and the~fan $\Sigma_{W_0}$ is the~simplicial fan in $\mathbb{R}^3$ defined by the~following data:
\begin{itemize}
\item the~list of primitive generators of rays in $\Sigma_{W_0}$ is
\begin{align*}
v_0&=(2,4,-1), & v_1&=(1,0,0), & v_2&=(0,0,1), & v_3&=(0,1,0), \\
v_4&=(0,0,-1), & v_5&=(0,-1,1), & v_6&=(-1,0,0);
\end{align*}
\item  the~list of maximal cones  in  $\Sigma_{W_0}$ is
$$
[0,1,3],   [0,1,4],   [0,3,4],  [1,2,3],   [1,2,5], [1,4,5],  [2,3,6],  [2,5,6],  [3,4,6],  [4,5,6],
$$
where $[i,j,k]$ is the~cone generated by the~rays $v_i$, $v_j$, and $v_k$.
\end{itemize}
As in Section~\ref{subsection:3-4-step-2}, let us denote by $T_i$ the~torus invariant divisor that corresponds to the~ray $v_i$.
Note that $T_0$ is the~exceptional divisor $G$.

Take $u\in\mathbb{R}_{\geqslant 0}$. As in Section~\ref{subsection:3-4-step-2}, we let $L_u=\lambda^*(L)-u T_0$. Then
$$
L_u\sim_{\mathbb{R}}(10-u)T_0+T_1+T_2+2T_3,
$$
which implies that $L_u$ is pseudoeffective if and only if $u\in[0,10]$.

Let $W_1$, $W_2$, $W_3$ be the~toric 3-folds defined by the~simplicial fans
$\Sigma_{W_1}$, $\Sigma_{W_2}$, $\Sigma_{W_3}$ in $\mathbb{R}^3$, respectively, which are determined by the~following data:
\begin{itemize}
\item the~list of primitive generators of rays of the~fans $\Sigma_{W_1}$, $\Sigma_{W_2}$, $\Sigma_{W_3}$ is
\begin{align*}
v_0&=(2,4,-1), & v_1&=(1,0,0), & v_2&=(0,0,1), & v_3&=(0,1,0), \\
v_4&=(0,0,-1), & v_5&=(0,-1,1), & v_6&=(-1,0,0);
\end{align*}
\item the~list of maximal cones of $\Sigma_{W_1}$ is
$$
[0,1,2], [0,2,3],  [0,1,4],  [0,3,4],  [1,4,5],  [1,2,5],  [2,3,6], [3,4,6], [4,5,6], [2,5,6];
$$
\item the~list of maximal cones of $\Sigma_{W_2}$ is
$$
[0,3,4], [0,4,6],  [0,1,2],  [0,2,3],  [0,1,4],  [1,4,5],  [1,2,5], [2,3,6], [4,5,6], [2,5,6];
$$
\item the~list of maximal cones of $\Sigma_{W_3}$ is
$$
[0,1,5], [0,2,5],  [0,3,6],  [0,4,6],  [0,2,3],  [0,1,4],  [1,4,5], [2,3,6], [4,5,6], [2,5,6].
$$
\end{itemize}
Then $W_1$, $W_2$, $W_3$ are projective, and there are small birational maps $W_0\dasharrow W_1\dasharrow W_2\dasharrow W_3$,
which can be illustrated by the~following self-explanatory toric diagrams:
\begin{center}
\begin{tikzpicture}[node distance=1cm,auto, scale=0.65]
\node[state, inner sep=1pt,minimum size=0pt] (q_0) {$v_0$};
\node[state, inner sep=1pt,minimum size=0pt] (q_1) at (1,1) {$v_1$};
\node[state,inner sep=1pt,minimum size=0pt] (q_2) at (2.4,-.3) {$v_2$};
\node[state,inner sep=1pt,minimum size=0pt] (q_3) at (1.2,-1.3) {$v_3$};
\node[state,inner sep=1pt,minimum size=0pt] (q_4) at (-1,-1) {$v_4$};
\node[state,inner sep=1pt,minimum size=0pt] (q_5) at (3,1) {$v_5$};
\node[state,inner sep=1pt,minimum size=0pt] (q_6) at (0,-2) {$v_6$};

\node[state,inner sep=0pt,minimum size=0pt] (q_7) at (4,0) {};
\node[state,inner sep=0pt,minimum size=0pt] (q_8) at (5,0) {};
\path[->, dashed, line width=1pt] (q_7) edge node[swap] {} (q_8);

\path[-] (q_0) edge node[swap] {} (q_1);
\path[-] (q_0) edge node[swap] {} (q_3);
\path[-] (q_0) edge node[swap] {} (q_4);
\path[-] (q_1) edge node[swap] {} (q_2);
\path[-, blue, line width=1pt] (q_1) edge node[swap] {} (q_3);
\path[-] (q_1) edge node[swap] {} (q_5);
\path[-] (q_3) edge node[swap] {} (q_4);
\path[-] (q_4) edge node[swap] {} (q_6);
\path[-] (q_3) edge node[swap] {} (q_6);
\path[-] (q_2) edge node[swap] {} (q_3);
\path[-] (q_2) edge node[swap] {} (q_5);
\path[-, bend right = 30] (q_1) edge node {} (q_4);
\path[-, bend left = 70] (q_4) edge node {} (q_5);
\path[-, bend left = 50] (q_2) edge node {} (q_6);
\path[-, bend left = 90] (q_5) edge node {} (q_6);
\end{tikzpicture}
\begin{tikzpicture}[node distance=1cm,auto, scale=0.65]
\node[state, inner sep=1pt,minimum size=0pt] (q_0) {$v_0$};
\node[state, inner sep=1pt,minimum size=0pt] (q_1) at (1,1) {$v_1$};
\node[state,inner sep=1pt,minimum size=0pt] (q_2) at (2.4,-.3) {$v_2$};
\node[state,inner sep=1pt,minimum size=0pt] (q_3) at (1.2,-1.3) {$v_3$};
\node[state,inner sep=1pt,minimum size=0pt] (q_4) at (-1,-1) {$v_4$};
\node[state,inner sep=1pt,minimum size=0pt] (q_5) at (3,1) {$v_5$};
\node[state,inner sep=1pt,minimum size=0pt] (q_6) at (0,-2) {$v_6$};

\node[state,inner sep=0pt,minimum size=0pt] (q_7) at (4,0) {};
\node[state,inner sep=0pt,minimum size=0pt] (q_8) at (5,0) {};
\path[->, dashed, line width=1pt] (q_7) edge node[swap] {} (q_8);

\path[-] (q_0) edge node[swap] {} (q_1);
\path[-] (q_0) edge node[swap] {} (q_3);
\path[-] (q_0) edge node[swap] {} (q_4);
\path[-] (q_1) edge node[swap] {} (q_2);
\path[-, blue, line width=1pt] (q_0) edge node[swap] {} (q_2);
\path[-] (q_1) edge node[swap] {} (q_5);
\path[-, red, line width=1pt] (q_4) edge node[swap] {} (q_3);
\path[-] (q_4) edge node[swap] {} (q_6);
\path[-] (q_3) edge node[swap] {} (q_6);
\path[-] (q_2) edge node[swap] {} (q_3);
\path[-] (q_2) edge node[swap] {} (q_5);
\path[-, bend right = 30] (q_1) edge node {} (q_4);
\path[-, bend left = 70] (q_4) edge node {} (q_5);
\path[-, bend left = 50] (q_2) edge node {} (q_6);
\path[-, bend left = 90] (q_5) edge node {} (q_6);
\end{tikzpicture}
\begin{tikzpicture}[node distance=1cm,auto, scale=0.65]
\node[state, inner sep=1pt,minimum size=0pt] (q_0) {$v_0$};
\node[state, inner sep=1pt,minimum size=0pt] (q_1) at (1,1) {$v_1$};
\node[state,inner sep=1pt,minimum size=0pt] (q_2) at (2.4,-.3) {$v_2$};
\node[state,inner sep=1pt,minimum size=0pt] (q_3) at (1.2,-1.3) {$v_3$};
\node[state,inner sep=1pt,minimum size=0pt] (q_4) at (-1,-1) {$v_4$};
\node[state,inner sep=1pt,minimum size=0pt] (q_5) at (3,1) {$v_5$};
\node[state,inner sep=1pt,minimum size=0pt] (q_6) at (0,-2) {$v_6$};

\path[-] (q_0) edge node[swap] {} (q_1);
\path[-] (q_0) edge node[swap] {} (q_3);
\path[-] (q_0) edge node[swap] {} (q_4);
\path[-, blue, line width=1pt] (q_1) edge node[swap] {} (q_2);
\path[-] (q_0) edge node[swap] {} (q_2);
\path[-] (q_1) edge node[swap] {} (q_5);
\path[-, red, line width=1pt] (q_0) edge node[swap] {} (q_6);
\path[-] (q_4) edge node[swap] {} (q_6);
\path[-] (q_3) edge node[swap] {} (q_6);
\path[-] (q_2) edge node[swap] {} (q_3);
\path[-] (q_2) edge node[swap] {} (q_5);

\path[-, bend right = 30] (q_1) edge node {} (q_4);
\path[-, bend left = 70] (q_4) edge node {} (q_5);
\path[-, bend left = 50] (q_2) edge node {} (q_6);
\path[-, bend left = 90] (q_5) edge node {} (q_6);

\node[state,inner sep=0pt,minimum size=0pt] (q_7) at (4,0) {};
\node[state,inner sep=0pt,minimum size=0pt] (q_8) at (5,0) {};
\path[->, dashed, line width=1pt] (q_7) edge node[swap] {} (q_8);

\end{tikzpicture}
\begin{tikzpicture}[node distance=1cm,auto, scale=0.65]
\node[state, inner sep=1pt,minimum size=0pt] (q_0) {$v_0$};
\node[state, inner sep=1pt,minimum size=0pt] (q_1) at (1,1) {$v_1$};
\node[state,inner sep=1pt,minimum size=0pt] (q_2) at (2.4,-.3) {$v_2$};
\node[state,inner sep=1pt,minimum size=0pt] (q_3) at (1.2,-1.3) {$v_3$};
\node[state,inner sep=1pt,minimum size=0pt] (q_4) at (-1,-1) {$v_4$};
\node[state,inner sep=1pt,minimum size=0pt] (q_5) at (3,1) {$v_5$};
\node[state,inner sep=1pt,minimum size=0pt] (q_6) at (0,-2) {$v_6$};

\path[-] (q_0) edge node[swap] {} (q_1);
\path[-] (q_0) edge node[swap] {} (q_3);
\path[-] (q_0) edge node[swap] {} (q_4);
\path[-, blue, line width=1pt] (q_0) edge node[swap] {} (q_5);
\path[-] (q_0) edge node[swap] {} (q_2);
\path[-] (q_1) edge node[swap] {} (q_5);
\path[-] (q_0) edge node[swap] {} (q_6);
\path[-] (q_4) edge node[swap] {} (q_6);
\path[-] (q_3) edge node[swap] {} (q_6);
\path[-] (q_2) edge node[swap] {} (q_3);
\path[-] (q_2) edge node[swap] {} (q_5);

\path[-, bend right = 30] (q_1) edge node {} (q_4);
\path[-, bend left = 70] (q_4) edge node {} (q_5);
\path[-, bend left = 50] (q_2) edge node {} (q_6);
\path[-, bend left = 90] (q_5) edge node {} (q_6);
\end{tikzpicture}
\end{center}

As in Section~\ref{subsection:3-4-step-2}, let us use the~same notations for the~corresponding torus invariant divisors
and torus invariant curves on each 3-fold $W_i$.
Similarly, we will use the~same notation for the~strict transforms of the~divisor $L_u$ on each 3-fold $W_i$.
As in Section~\ref{subsection:3-4-step-2}, we see that
\begin{itemize}
\item $L_u$ is nef on $W_0$ for $u\in[0,1]$;
\item $L_u$ is nef on $W_1$ for $u\in[1,2]$;
\item $L_u$ is nef on $W_2$ for $u\in[2,3]$.
\end{itemize}
Moreover, the~Zariski decomposition of the~divisor $L_u$ exists on the~3-fold $W_2$ for each $u\in[3,5]$,
and the~Zariski decomposition exists on $W_3$ for $u\in[5,10]$.
Let us denote by $P(u)$ its positive part, and let us denote by $N(u)$ its negative part.
Then $P(u)=L_u-N(u)$, where
$$
N(u)=\left\{\aligned
&\frac{u-3}{4}T_3\ \text{ for }\ u\in[3,5],\\
&\frac{u-3}{4}T_3\ \text{ for }\ u\in[5,7],\\
&\frac{u-7}{3}T_2+\frac{u-4}{3}T_3\ \text{ for }\ u\in[7,8],\\
&\frac{u-8}{2}T_1+\frac{u-7}{3}T_2+\frac{u-4}{3}T_3\ \text{ for }\ u\in[8,10].
\endaligned
\right.
$$
Here, the~divisor $L_u-\frac{u-3}{4}T_3$ is nef on $W_2$ for $u\in[3,5]$,
and it is nef on $W_3$ for $[5,7]$.

Now, let us consider a common partial toric resolution $\widetilde{W}$ of the~toric 3-folds $W_0$, $W_1$, $W_2$ and $W_3$.
Namely,~let $\widetilde{W}$ be the~toric 3-fold defined by the~simplicial fan $\Sigma_{\widetilde{W}}$ in $\mathbb{R}^3$ given by the~following~data:
\begin{itemize}
\item the~list of primitive generators of rays of $\Sigma_{\widetilde{W}}$ is
\begin{align*}
v_0&=(2,4,-1), & v_1&=(1,0,0), & v_2&=(0,0,1), & v_3&=(0,1,0), & v_4&=(0,0,-1), & v_5&=(0,-1,1), \\
v_6&=(-1,0,0), & v_7&=(0,4,-1), & v_8&=(1,2,0), & v_9&=(2,3,0), & v_{10}&=(2,0,3);
\end{align*}

\item the~list of maximal cones of $\Sigma_{\widetilde{W}}$ is
\begin{align*}
[0, 1,4]&, & [0,1,9]&, & [0,3,7]&, & [0,3,8]&, & [ 0,4,7]&, & [0,8,9]&, &  [1,4,5]&, & [1,5,10]&, &[1,9,10]&, \\
[2,3,6]&, & [2,3,8]&, & [2,5,6]&, & [2,5,10]&, & [2,8,10]&, & [3,6,7]&, & [4,5,6]&, & [4,6,7] &, & [8,9,10]&.
\end{align*}
\end{itemize}
The fan $\Sigma_{\widetilde{W}}$ can be diagramed as follows:
\begin{center}
\begin{tikzpicture}[node distance=1cm,auto]
\node[state, inner sep=1pt,minimum size=0pt] (q_1) at (1,2) {$v_1$};
\node[state,inner sep=1pt,minimum size=0pt] (q_2) at (2.4,-.4) {$v_2$};
\node[state,inner sep=1pt,minimum size=0pt] (q_3) at (1.2,-1.3) {$v_3$};
\node[state,inner sep=1pt,minimum size=0pt] (q_4) at (-1.4,-.3) {$v_4$};
\node[state,inner sep=1pt,minimum size=0pt] (q_5) at (4,2) {$v_5$};
\node[state,inner sep=1pt,minimum size=0pt] (q_6) at (-.2,-1.6) {$v_6$};
\node[state,inner sep=1pt,minimum size=0pt] (q_7) at (-.1,-.8) {$v_{7}$};
\node[state,inner sep=1pt,minimum size=0pt] (q_8) at (1.2,-.2) {$v_{8}$};
\node[state, inner sep=1pt,minimum size=0pt] (q_0) {$v_0$};
\node[state,inner sep=1pt,minimum size=0pt] (q_9) at (1.1,.6) {$v_{9}$};
\node[state,inner sep=1pt,minimum size=0pt] (q_10) at (2.2,1.1) {$v_{10}$};
\node[state,inner sep=1pt,minimum size=0pt] (q_5) at (4,2) {$v_5$};
\path[-] (q_0) edge node[swap] {} (q_1);
\path[-] (q_0) edge node[swap] {} (q_3);
\path[-] (q_0) edge node[swap] {} (q_4);
\path[-] (q_1) edge node[swap] {} (q_10);
\path[-] (q_8) edge node[swap] {} (q_3);
\path[-] (q_1) edge node[swap] {} (q_5);
\path[-] (q_4) edge node[swap] {} (q_7);
\path[-] (q_3) edge node[swap] {} (q_7);
\path[-] (q_0) edge node[swap] {} (q_7);
\path[-] (q_6) edge node[swap] {} (q_7);
\path[-] (q_4) edge node[swap] {} (q_6);
\path[-] (q_3) edge node[swap] {} (q_6);
\path[-] (q_2) edge node[swap] {} (q_3);
\path[-] (q_2) edge node[swap] {} (q_5);
\path[-] (q_0) edge node[swap] {} (q_8);
\path[-] (q_2) edge node[swap] {} (q_8);
\path[-] (q_10) edge node[swap] {} (q_9);
\path[-] (q_0) edge node[swap] {} (q_9);
\path[-] (q_1) edge node[swap] {} (q_9);
\path[-] (q_8) edge node[swap] {} (q_9);
\path[-] (q_8) edge node[swap] {} (q_10);
\path[-] (q_2) edge node[swap] {} (q_10);
\path[-] (q_5) edge node[swap] {} (q_10);
%\path[-, bend left = 30] (q_0) edge node {} (q_1);
\path[-, bend right = 30] (q_1) edge node {} (q_4);
\path[-, bend left = 90] (q_4) edge node {} (q_5);
\path[-, bend left = 50] (q_2) edge node {} (q_6);
\path[-, bend left = 90] (q_5) edge node {} (q_6);
\end{tikzpicture}
\end{center}
Then there exists the~following toric commutative diagram
$$
\xymatrix{
&&&\widetilde{W}\ar@/_1.4pc/@{->}[dlll]_{\zeta_0}\ar@{->}[dr]^{\zeta_2}\ar@{->}[dl]_{\zeta_1}\ar@/^1.4pc/@{->}[rrrd]^{\zeta_3}&&&\\%
W_0\ar@{-->}[rr]\ar@{->}[d]_{\lambda}&&W_1\ar@{-->}[rr]&&W_2\ar@{-->}[rr]&&W_3\\
Y&&&&&&}
$$
where $\zeta_0$, $\zeta_1$, $\zeta_2$, $\zeta_3$ are toric birational morphisms.

Let $\widetilde{T}_i$ be the~torus invariant divisor on $\widetilde{W}$ corresponding to the~ray $v_i$ in the~fan $\Sigma_{\widetilde{W}}$.
Then
$$
\begin{array}{ll}
\zeta^*_0(T_0)=\widetilde{T}_0, &
\zeta^*_0(T_1)=\widetilde{T}_1+\widetilde{T}_8+2\widetilde{T}_9+2\widetilde{T}_{10},\\
\zeta^*_0(T_2)=\widetilde{T}_2+3\widetilde{T}_{10}, &
\zeta^*_0(T_3)=\widetilde{T}_3+4\widetilde{T}_7+2\widetilde{T}_8+3\widetilde{T}_{9},\\
\zeta^*_1(T_0)=\widetilde{T}_0
+\frac{1}{2}\widetilde{T}_8+\frac{3}{4}\widetilde{T}_{9}, &
\zeta^*_1(T_1)=\widetilde{T}_1+\frac{1}{2}\widetilde{T}_9+2\widetilde{T}_{10},\\
\zeta^*_1(T_2)=\widetilde{T}_2+\frac{1}{2}\widetilde{T}_8
+\frac{3}{4}\widetilde{T}_{9}+3\widetilde{T}_{10}, &
\zeta^*_1(T_3)=\widetilde{T}_3+4\widetilde{T}_7,\\
\zeta^*_2(T_0)=\widetilde{T}_0+\widetilde{T}_7
+\frac{1}{2}\widetilde{T}_8+\frac{3}{4}\widetilde{T}_{9}, &
\zeta^*_2(T_1)=\widetilde{T}_1+\frac{1}{2}\widetilde{T}_9+2\widetilde{T}_{10},\\
\zeta^*_2(T_2)=\widetilde{T}_2+\frac{1}{2}\widetilde{T}_8
+\frac{3}{4}\widetilde{T}_{9}+3\widetilde{T}_{10}, &
\zeta^*_2(T_3)=\widetilde{T}_3,\\
\zeta^*_3(T_0)=\widetilde{T}_0+\widetilde{T}_7
+\frac{1}{2}\widetilde{T}_8+\widetilde{T}_9+\widetilde{T}_{10}, &
\zeta^*_3(T_1)=\widetilde{T}_1,\\
\zeta^*_3(T_2)=\widetilde{T}_2+\frac{1}{2}\widetilde{T}_{8}, &
\zeta^*_3(T_3)=\widetilde{T}_3.
\end{array}
$$

On the~3-fold $\widetilde{W}$, the~Zariski decomposition of the~divisor $\zeta_0^*(L_u)$ does exist for every $u\in[0,10]$.
Let~$P_{\widetilde{W}}(u)$ be its positive part, and let $N_{\widetilde{W}}(u)$ be its negative part.
We can express them as linear combinations of the~torus invariant divisors.
These expressions are presented in Table~\ref{table:Part3-ZDu}.

Fix the~quotient homomorphism $\mathbb{Z}^3\to\mathbb{Z}^3/\mathbb{Z}v_0\cong \mathbb{Z}^2$
such that $v_1\mapsto (1,0)$ and $v_3\mapsto (0,1)$.
Then  $\Sigma_{\widetilde{W}}$ is mapped to the~fan in $\mathbb{R}^2$
whose rays are generated by the~following vectors:
$$
w_1=(1,0),   w_2=(2,3),    w_3=(1,2),    w_4=(0,1),  w_5=(-1,0), w_6=(-1,-2).
$$
This two-dimensional fan defines the~surface $\widetilde{T}_0$.
Let $\zeta=\zeta_0|_{\widetilde{T}_0}\colon\widetilde{T}_0\to T_0$ be the~restriction~map.
Then $\zeta$ is described by a~map from the~fan of the~toric surface~$\widetilde{T}_0$ to the~fan of the~surface~$T_0$,
which can be illustrated by the~following toric picture:
 \begin{center}
\begin{tikzpicture}[>=latex]
    \begin{scope}
        \draw[style=help lines,dashed] (2.5,1.7) grid[step=.7cm] (6,7.5); % Draws a grid in the~new coordinates.
    \foreach \x in {4,5,...,8}{                           % Two indices running over each
        \foreach \y in {3,4,...,10}{                       % node on the~grid we have drawn
        \node[draw,circle,inner sep=1pt,fill] at (.7*\x,.7*\y) {}; % Places a dot at those points
 }
}
\draw[thick, red,-] (4.2,4.2) -- (5.6,6.3)  node [above ] {$w_2$};
\draw[thick, red,-] (4.2,4.2) -- (4.2,4.9)  node [above ] {$w_4$};
\draw[thick, red,-] (4.2,4.2) -- (4.9,5.6)  node [above left] {$w_3$};
\draw[thick, red,-] (4.2,4.2) -- (4.9,4.2)  node [right] {$w_1$};
\draw[thick, red,-] (4.2,4.2) -- (3.5,2.8) node [below] {$w_6$};
\draw[thick, red,-] (4.2,4.2) -- (3.5,4.2) node [left] {$w_5$};
\draw[style=help lines,dashed] (11,1.7) grid[step=.7cm] (14.5,7.5);
\foreach \x in {16,17,...,20}{
        \foreach \y in {3,4,...,10}{
                \node[draw,circle,inner sep=1pt,fill] at (.7*\x,.7*\y) {};
        }
    }
\draw[thick, red,-] (12.6,4.2) -- (12.6,4.9)  node [above ] {$w_4$};
\draw[thick, red,-] (12.6,4.2) -- (13.3,4.2)  node [right] {$w_1$};
\draw[thick, red,-] (12.6,4.2) -- (11.9,2.8) node [below] {$w_6$};
\draw[->] (7.7,4.2) -- (8.5,4.2) node [above left] {};
     \node (O) at (8.1,4.5) {$\zeta$};
        \node (O) at (4.2,7.5) {$\widetilde{T}_0$};
           \node (O) at (12.6,7.5) {$T_0$};
\end{scope}
\end{tikzpicture}
\end{center}
It contracts the~curves of the~rays $w_5$, $w_3$, $w_2$ to points on the~surface $T_0$.

Let $\alpha_1,\ldots,\alpha_6$ be the~torus invariant curves in $\widetilde{T}_0$ defined by $w_1,\ldots,w_6$, respectively. Then
$$
\widetilde{T}_1\big|_{\widetilde{T}_0}=\alpha_1, \widetilde{T}_3\big|_{\widetilde{T}_0}=\alpha_4, \widetilde{T}_4\big|_{\widetilde{T}_0}=\frac{1}{2}\alpha_6, \widetilde{T}_7\big|_{\widetilde{T}_0}=\frac{1}{2}\alpha_5, \widetilde{T}_8\big|_{\widetilde{T}_0}=\alpha_3, \widetilde{T}_9\big|_{\widetilde{T}_0}=\alpha_2, \widetilde{T}_0\big|_{\widetilde{T}_0}=-\frac{1}{2}(\alpha_1+2\alpha_2+\alpha_3).
$$
Set $\overline{\alpha}_1=\zeta(\alpha_1)$, $\overline{\alpha}_4=\zeta(\alpha_4)$, $\overline{\alpha}_6=\zeta(\alpha_6)$.
Then $\overline{\alpha}_1=\{x=0\}$,  $\overline{\alpha}_4=\{y=0\}$, $\overline{\alpha}_6=n_G=\{z=0\}$.
Set $Q_{14}=\overline{\alpha}_1\cap \overline{\alpha}_4$, $Q_{16}=\overline{\alpha}_1\cap \overline{\alpha}_6$, $Q_{46}=\overline{\alpha}_4\cap \overline{\alpha}_6$.
Then $Q_{16}$ is the~singular point of the~surface~$G$.
Note that the~curve $R_G$ meets $\overline{\alpha}_1$ transversally at $Q_{14}$,
it meets the~curve $\overline{\alpha}_4$ transversally at two distinct points (one of them is $Q_{14}$),
and $R_G$ meets the~curve $\overline{\alpha}_6$ transversally at a single point, which is different from $Q_{16}$ and $Q_{46}$.
Let $Q_4$ be the~point in $R_G\cap\overline{\alpha}_4$ that is different from $Q_{14}$,
and let $Q_6$ be the~intersection point $R_G\cap \overline{\alpha}_6$.

Arguing as in Section~\ref{subsection:3-4-step-2},
we obtain the~following intersection matrix:
$$
A:=\left(\alpha_i\alpha_j\right)=
\begin{pmatrix}[1.4]
  -\frac{1}{6} & \frac{1}{3}& 0 &0 &0 &\frac{1}{2}\\
   \frac{1}{3} & -\frac{2}{3}& 1&0 &0& 0\\
    0 & 1& -2 &1&0&0\\
    0& 0& 1 &-2&1&0\\
     0 & 0& 0& 1 &-\frac{1}{2}&\frac{1}{2}\\
     \frac{1}{2} & 0&0&0& \frac{1}{2} &0\\
\end{pmatrix}
$$

Now, set $\widetilde{P}(u)=P_{\widetilde{W}}(u)|_{\widetilde{T}_0}$ and $\widetilde{N}(u)=N_{\widetilde{W}}(u)|_{\widetilde{T}_0}$.
We can express $\widetilde{P}(u)$ and $\widetilde{N}(u)$ as linear combinations of the~curves $\alpha_1$, $\alpha_2$, $\alpha_3$, $\alpha_4$, $\alpha_5$, $\alpha_6$.
These expressions are presented in Table~\ref{table:Part3-widetildePNu}.

Let $Q$ be a point in the~surface $G=T_0$, let $C$ be a smooth curve in $G$ that passes through~$P$,
and let $\widetilde{C}$ be its proper transform on $\widetilde{T}_0$.
For every $u\in[0,10]$, let
$$
t(u)=\inf\Big\{v\in \mathbb R_{\geqslant 0} \ \big|\ \text{$\widetilde{P}(u)-vC$ is pseudoeffective}\Big\}.
$$
For every $v\in[0,t(u)]$, let $P(u,v)$ be the~positive part of the~Zariski decomposition of $\widetilde{P}(u)-vC$,
and let $N(u,v)$ be its negative part. Set
$$
S_L\big(W^{G}_{\bullet,\bullet};C\big)=\frac{3}{L^3}\int\limits_0^{10}\big(\widetilde{P}(u)\big)^2\mathrm{ord}_C\big(\widetilde{N}(u)\big)du+\frac{3}{L^3}\int\limits_0^{10}\int\limits_0^{t(u)}\big(P(u,v)\big)^2dvdu.
$$
Now, we write $\zeta^*(C)=\widetilde{C}+\Sigma$ for an effective $\mathbb{R}$-divisor $\Sigma$ on the~surface $\widetilde{T}_0$.
For every $u\in[0,10]$, write $\widetilde{N}(u)=d(u)C+N^\prime(u)$,
where $d(u)=\mathrm{ord}_C(\widetilde{N}(u))$, and $N^\prime(u)$ is an effective divisor~on~$\widetilde{T}_0$.~Set
$$
S\big(W_{\bullet, \bullet,\bullet}^{G,C};Q\big)=\frac{3}{L^3}
\int\limits_0^{10}\int\limits_0^{t(u)}\big(P(u,v)\cdot\widetilde{C}\big)^2dvdu+F_Q\big(W_{\bullet, \bullet,\bullet}^{G,C}\big)
$$
for
$$
F_Q\big(W_{\bullet, \bullet,\bullet}^{G,C}\big)=\frac{6}{L^3}\int\limits_0^{10}\int\limits_0^{t(u)}\big(P(u,v)\cdot\widetilde{C}\big)\cdot\mathrm{ord}_Q\Big(\big(N^\prime(u)+N(u,v)-(v+d(u))\Sigma\big)\big|_{\widetilde{C}}\Big)dvdu,
$$
where we consider $Q$ as a point in $\widetilde{C}$ using the~isomorphism $\widetilde{C}\cong C$ induced by $\zeta$.

If $C\not\subset\mathrm{Supp}(\Delta_G)$, we have
$(K_{G}+C+\Delta_G)\vert_{C}\sim_{\mathbb{R}}K_C+\Delta_C$, where $\Delta_C$ is an~effective divisor known as the~different.
If $C\not\subset\mathrm{Supp}(\Delta_G)$, we still can define the~different $\Delta_C$ using
$$
\big(K_{G}+C+\Delta_G-\mathrm{ord}_C(\Delta_G)\big)\big\vert_{C}\sim_{\mathbb{R}}K_C+\Delta_C.
$$
The different $\Delta_C$ can be computed locally near any point in $C$.
Now, arguing as in Section~\ref{subsection:3-4-step-2}, we see
that to exclude the~case ($\mathbb{A}_3$),
it is enough to show that for every point $Q\in G$, there exists a smooth irreducible curve $C\subset G$ passing through $Q$
such that
\begin{equation}
\label{equation:3-4-A3-case-1}
S_L\big(W^{G}_{\bullet,\bullet};C\big)\leqslant A_{G,\Delta_G}(C)
\end{equation}
and
\begin{equation}
\label{equation:3-4-A3-case-2}
S(W_{\bullet,\bullet,\bullet}^{G,C};Q)\leqslant A_{C,\Delta_C}(Q).
\end{equation}
Let us do this in the~rest of this section, which would complete the~proof of Theorem~\ref{theorem:3-4-K-stable}.

Let $Q$ be a~point in $G=T_0\cong\mathbb{P}(1,2,1)$.
Let us choose the~curve $C$ as follows.
If $Q\in\overline{\alpha}_1\cup\overline{\alpha}_4\cup\overline{\alpha}_6$,
we let $C$ be a curve among $\overline{\alpha}_1$, $\overline{\alpha}_4$, $\overline{\alpha}_6$ that contains $Q$.
If $Q\not\in\overline{\alpha}_1\cup\overline{\alpha}_4\cup\overline{\alpha}_6$,
then there is a unique curve $\overline{\alpha}_0\in|\mathcal{O}_G(1)|$ that contains~$Q$.
In~this case, we let $C=\overline{\alpha}_0$, and we denote by $\alpha_0$ the~proper transform of the~curve $\overline{\alpha}_0$ on the~surface~$\widetilde{T}_0$.
Then $\Sigma$ and $\Delta_C$ can be described as follows:
\begin{itemize}
\item[($\overline{\alpha}_1$)] if $C=\overline{\alpha}_1$, then $\Sigma=2\alpha_2+\alpha_3$ and $\Delta_C=\frac{1}{2}Q_{16}+\frac{1}{2}Q_{14}$,
\item[($\overline{\alpha}_4$)] if $C=\overline{\alpha}_4$, then $\Sigma=3\alpha_2+2\alpha_3+2\alpha_5$ and $\Delta_C=\frac{1}{2}Q_{14}+\frac{1}{2}Q_4$,
\item[($\overline{\alpha}_6$)] if $C=\overline{\alpha}_6$, then $\Sigma=\alpha_5$ and $\Delta_C=\frac{3}{4}Q_{16}+\frac{1}{2}Q_6$,
\item[($\overline{\alpha}_0$)] if $C=\overline{\alpha}_0$, then $\Sigma=0$ and $\Delta_C=\Delta_{G}\vert_{C}+\frac{3}{4}Q_{16}$.
\end{itemize}
In the~last case, we have $\mathrm{ord}_Q(\Delta_C)\leqslant\frac{1}{2}$, because $\overline{\alpha}_0$ and $R_G$ meet transversally.

In each possible case, we compute $t(u)$ in Table~\ref{table:t-u-2}.

For each $u\in[0,10]$ and $v\in[0,t(u)]$, we can express
both divisors $P(u,v)$ and $N(u,v)$ as linear combinations of the~curves
$\alpha_1$, $\alpha_2$, $\alpha_3$, $\alpha_4$, $\alpha_5$, $\alpha_6$.
They are listed in Tables~\ref{table:Part3-ZDuv1},~\ref{table:Part3-ZDuv4},~\ref{table:Part3-ZDuv6},~\ref{table:Part3-ZDuv0}.

Now, arguing as in Section~\ref{subsection:3-4-step-2}, we compute
$$
S_L\big(W^{G}_{\bullet,\bullet};C\big)=
\left\{\aligned
&\frac{1}{2}\ \text{ if }\ C=\overline{\alpha}_1, \\
&\frac{7}{9}\ \text{ if }  C=\overline{\alpha}_4, \\
&\frac{2}{9}\ \text{ if }  C=\overline{\alpha}_6, \\
&\frac{3}{16}\ \text{ if }  C=\overline{\alpha}_0.
\endaligned
\right.
$$
This gives \eqref{equation:3-4-A3-case-1}. Note that $A_{G,\Delta_G}(\overline{\alpha}_6)=\frac{1}{2}$.

If $Q\in\alpha_1\setminus\{Q_{14}\}$, let $C=\overline{\alpha}_1$, then $S_L(W^{G,\overline{\alpha}_1}_{\bullet,\bullet,\bullet};Q)=\frac{1}{9}$.
If $Q\in\overline{\alpha}_4\setminus\{Q_{46}\}$, let $C=\overline{\alpha}_4$, then
$$
S_L\big(W^{G, \overline{\alpha}_4}_{\bullet,\bullet,\bullet};Q\big)=
\left\{\aligned
&\frac{1}{2}\ \text{ if }\ Q=Q_{14},\\
&\frac{3}{16}\ \text{ if }\ Q\ne Q_{14}.
\endaligned
\right.
$$
If $Q\in\overline{\alpha}_6\setminus\{Q_{16}\}$, we let $C=\overline{\alpha}_1$, which gives
$$
S_L\big(W^{G, \overline{\alpha}_6}_{\bullet,\bullet,\bullet};Q\big)=
\left\{\aligned
&\frac{7}{9}\ \text{ if }\ Q=Q_{46},\\
&\frac{2}{9}\ \text{ if }\ Q\ne Q_{46}.
\endaligned
\right.
$$
If $Q\not\in\overline{\alpha}_1\cup\overline{\alpha}_4\cup\overline{\alpha}_6)$, we let $C=\overline{\alpha}_0$, which gives
$S_L(W^{G, \overline{\alpha}_0}_{\bullet,\bullet,\bullet};Q)=\frac{1729}{6912}$.
In each case we get \eqref{equation:3-4-A3-case-2}. This excludes the~case ($\mathbb{A}_3$), and completes the~proof of Theorem~\ref{theorem:3-4-K-stable}.

%\newpage

\appendix

\section{Tables}
\label{section:tables}

\begin{longtable}{|c|c|cccccccc|}
\caption{Zariski decomposition of the~divisor $\zeta_0^*(L_u)$}
\label{table:ZDu}
\endfirsthead
\endhead
\hline
$u$
& $P_{\widetilde{W}}(u)$ \& $N_{\widetilde{W}}(u)$
&\ \  $\widetilde{T}_0$ \ \
&\ \  $\widetilde{T}_1$ \ \
&\ \  $\widetilde{T}_2$ \ \
&\ \  $\widetilde{T}_3$ \ \
&\ \  $\widetilde{T}_7$ \ \
&\ \  $\widetilde{T}_8$ \ \
&\ \  $\widetilde{T}_9$ \ \
&\ \  $\widetilde{T}_{10}$ \ \
\\
\hline
\hline
$[0,1]$&
 $P_{\widetilde{W}}(u)$ &
 $7-u$&$1$&$1$&$2$&$6$&$7$&$5$&$3$ \\
&
 $N_{\widetilde{W}}(u)$ &
 $0$&$0$&$0$&$0$&$0$&$0$&$0$&$0$ \\
  \hline
 $[1,2]$&
 $P_{\widetilde{W}}(u)$ &
 $7-u$&$1$&$1$&$2$&$7-u$&$8-u$&$\frac{17-2u}{3}$&$3$ \\
&
 $N_{\widetilde{W}}(u)$ &
 $0$&$0$&$0$&$0$&$u-1$&$u-1$&$\frac{2}{3}(u-1)$&$0$ \\
 \hline
 $[2,4]$&
 $P_{\widetilde{W}}(u)$ &
 $7-u$&$1$&$1$&$\frac{8-u}{3}$&$7-u$&$8-u$&$\frac{17-2u}{3}$&$3$ \\
&
 $N_{\widetilde{W}}(u)$ &
 $0$&$0$&$0$&$\frac{u-2}{3}$&$u-1$&$u-1$&$\frac{2}{3}(u-1)$&$0$ \\
 \hline
 $[4,5]$&
 $P_{\widetilde{W}}(u)$ &
 $7-u$&$1$&$1$&$\frac{8-u}{3}$&$7-u$&$8-u$&$7-u$&$7-u$ \\
&
 $N_{\widetilde{W}}(u)$ &
  $0$&$0$&$0$&$\frac{u-2}{3}$&$u-1$&$u-1$&$u-2$&$u-4$ \\
 \hline
 $[5,6]$&
 $P_{\widetilde{W}}(u)$ &
 $7-u$&$1$&$\frac{7-u}{2}$&$\frac{7-u}{2}$&$7-u$&$\frac{3}{2}(7-u)$&$7-u$&$7-u$ \\
&
  $N_{\widetilde{W}}(u)$ &
  $0$&$0$&$\frac{u-5}{2}$&$\frac{u-3}{2}$&$u-1$&$\frac{3u-7}{2}$&$u-2$&$u-4$ \\
 \hline
 $[6,7]$&
 $P_{\widetilde{W}}(u)$ &
 $7-u$&$7-u$&$\frac{7-u}{2}$&$\frac{7-u}{2}$&$7-u$&$\frac{3}{2}(7-u)$&$7-u$&$7-u$ \\
&
  $N_{\widetilde{W}}(u)$ &
  $0$&$u-6$&$\frac{u-5}{2}$&$\frac{u-3}{2}$&$u-1$&$\frac{3u-7}{2}$&$u-2$&$u-4$ \\
 \hline
\end{longtable}

\begin{longtable}{|c|c|cccccc|}
\caption{Expressions for $\widetilde{P}(u)$ and $\widetilde{N}(u)$}
\label{table:widetildePNu}
\endfirsthead
\endhead
\hline
$u$ &
&\ \  $\alpha_1$ \ \
&\ \  $\alpha_2$ \ \
&\ \  $\alpha_3$ \ \
&\ \  $\alpha_4$ \ \
&\ \  $\alpha_5$ \ \
&\ \  $\alpha_6$ \ \
\\
\hline
\hline
 $[0,1]$&
 $\widetilde{P}(u)$ &
 $u-6$&$u-2$&$u$&$2$&$6$&$0$ \\
&
 $\widetilde{N}(u)$  &
 $0$&$0$&$0$&$0$&$0$&$0$ \\
  \hline
 $[1,2]$&
 $\widetilde{P}(u)$  &
 $u-6$&$\frac{u-4}{3}$&$1$&$2$&$7-u$&$0$\\
&
 $\widetilde{N}(u)$  &
 $0$&$\frac{2}{3}(u-1)$&$u-1$&$0$&$u-1$&$0$ \\
 \hline
 $[2,4]$&
 $\widetilde{P}(u)$  &
 $u-6$&$\frac{u-4}{3}$&$1$&$\frac{8-u}{3}$&$7-u$&$0$ \\
&
 $\widetilde{N}(u)$  &
 $0$&$\frac{2}{3}(u-1)$&$u-1$&$\frac{u-2}{3}$&$u-1$&$0$ \\
 \hline
 $[4,5]$&
 $\widetilde{P}(u)$  &
 $u-6$&$0$&$1$&$\frac{8-u}{3}$&$7-u$&$0$ \\
&
 $\widetilde{N}(u)$  &
  $0$&$u-2$&$u-1$&$\frac{u-2}{3}$&$u-1$&$0$ \\
 \hline
 $[5,6]$&
 $\widetilde{P}(u)$  &
 $6-u$&$0$&$\frac{7-u}{2}$&$\frac{7-u}{2}$&$7-u$&$0$ \\
&
 $\widetilde{N}(u)$  &
  $0$&$u-2$&$\frac{3u-7}{2}$&$\frac{u-3}{2}$&$u-1$&$0$ \\
 \hline
 $[6,7]$&
 $\widetilde{P}(u)$  &
 $0$&$0$&$\frac{7-u}{2}$&$\frac{7-u}{2}$&$7-u$&$0$ \\
&
 $\widetilde{N}(u)$  &
  $u-6$&$u-2$&$\frac{3u-7}{2}$&$\frac{u-3}{2}$&$u-1$&$0$ \\
 \hline

\end{longtable}

\begin{longtable}{|c|c|c|c|c|c|c|}
\caption{Values of $t(u)$}
\label{table:t-u-1}
\endfirsthead
\endhead
  \hline
  \diagbox{$C$}{$u$}  & $[0,1]$  & $[1,2]$  & $[2,4]$ & $[4,5]$ & $[5,6]$ & $[6,7]$ \\
  \hline
  \hline
  $\overline{\alpha}_1$  & $u$ & $1$ & $1$  & $1$  & $1$  & $7-u$  \\
  \hline
  $\overline{\alpha}_4$  & $\frac{u}{3}$  &  $\frac{u}{3}$ &  $\frac{2}{3}$ & $\frac{2}{3}$  & $\frac{9-u}{6}$   & $\frac{7-u}{2}$   \\
  \hline
  $\overline{\alpha}_6$  &  $u$ & $1$  & $1$  & $1$  & $\frac{7-u}{2}$  & $\frac{7-u}{2}$  \\
  \hline
  $\overline{\alpha}_0$  &  $u$ & $1$  & $1$  & $\frac{7-u}{3}$  & $\frac{7-u}{3}$  & $\frac{7-u}{3}$  \\
  \hline
\end{longtable}

\begin{longtable}{|c|c|c|cccccc|}
\caption{\mbox{Expressions for $P(u,v)$ and $N(u,v)$ in the~case $C=\overline{\alpha}_1$}}
\label{table:ZDuv1}
\endfirsthead
\endhead
\hline
$u$& $v$& $P(u,v)$ \& $N(u,v)$
& $\alpha_1$
& $\alpha_2$
& $\alpha_3$
& $\alpha_4$
& $\alpha_5$
& $\alpha_6$
\\
\hline
\hline
 $[0,1]$& $[0,u]$&
 $P(u,v)$ &
 $u-6-v$&$u-2-v$&$u-v$&$2$&$6$&$0$ \\
& &
 $N(u,v)$  &
 $0$&$v$&$v$&$0$&$0$&$0$ \\
  \hline\hline
$[1,2]$& $[0,u-1]$&
 $P(u,v)$ &
 $u-6-v$&$\frac{u-4-v}{3}$&$1$&$2$&$7-u$&$0$\\
& &
 $N(u,v)$  &
 $0$&$\frac{v}{3}$&$0$&$0$&$0$&$0$ \\
 \hline
$[1,2]$& $[u-1,1]$&
 $P(u,v)$ &
 $u-6-v$&$u-2-v$&$u-v$&$2$&$7-u$&$0$\\
& &
 $N(u,v)$  &
 $0$&$\frac{3v-2u+2}{3}$&$v-u+1$&$0$&$0$&$0$ \\
 \hline\hline
$[2,4]$& $[0,1]$&
  $P(u,v)$ &
 $u-6-v$&$\frac{u-4-v}{3}$&$1$&$\frac{8-u}{3}$&$7-u$&$0$ \\
& &
 $N(u,v)$  &
 $0$&$\frac{v}{3}$&$0$&$0$&$0$&$0$ \\
 \hline\hline
$[4,5]$& $[0,u-4]$&
  $P(u,v)$ &
   $u-6-v$&$0$&$1$&$\frac{8-u}{3}$&$7-u$&$0$ \\
& &
$N(u,v)$  &
  $0$&$0$&$0$&$0$&$0$&$0$ \\
 \hline
$[4,5]$& $[u-4,1]$&
  $P(u,v)$ &
   $u-6-v$&$\frac{u-4-v}{3}$&$1$&$\frac{8-u}{3}$&$7-u$&$0$ \\
& &
$N(u,v)$  &
  $0$&$\frac{u-4-v}{6}$&$0$&$0$&$0$&$0$ \\
 \hline
\hline
$[5,6]$& $[0,1]$&
  $P(u,v)$ &
 $6-u-v$&$0$&$\frac{7-u}{2}$&$\frac{7-u}{2}$&$7-u$&$0$ \\
& &
 $N(u,v)$  &
  $0$&$0$&$0$&$0$&$0$&$0$ \\
 \hline\hline
$[6,7]$& $[0,7-u]$&
 $P(u,v)$ &
  $-v$&$0$&$\frac{7-u}{2}$&$\frac{7-u}{2}$&$7-u$&$0$ \\
& &
 $N(u,v)$  &
  $0$&$0$&$0$&$0$&$0$&$0$ \\
 \hline
\end{longtable}

\begin{longtable}{|c|c|c|cccccc|}
\caption{\mbox{Expressions for $P(u,v)$ and $N(u,v)$ in the~case $C=\overline{\alpha}_4$}}
\label{table:ZDuv4}
\endfirsthead
\endhead
\hline
$u$& $v$& $P(u,v)$ \& $N(u,v)$
& $\alpha_1$
& $\alpha_2$
& $\alpha_3$
& $\alpha_4$
& $\alpha_5$
& $\alpha_6$
\\
\hline
\hline
 $[0,1]$& $[0,\frac{u}{3}]$&
 $P(u,v)$ &
 $u-6$&$u-2-2v$&$u-3v$&$2-v$&$6-3v$&$0$ \\
& &
 $N(u,v)$  &
 $0$&$2v$&$3v$&$0$&$3v$&$0$ \\
  \hline\hline
$[1,2]$& $[0,\frac{u-1}{3}]$&
 $P(u,v)$ &
 $u-6$&$\frac{u-4}{3}$&$1$&$2-v$&$7-u$&$0$\\
& &
 $N(u,v)$  &
 $0$&$0$&$0$&$0$&$0$&$0$ \\
 \hline
$[1,2]$& $[\frac{u-1}{3},\frac{u}{3}]$&
 $P(u,v)$ &
 $u-6$&$u-2-2v$&$u-3v$&$2-v$&$6-3v$&$0$\\
& &
 $N(u,v)$  &
 $0$&$\frac{6v-2u+2}{3}$&$3v-u+1$&$0$&$3v-u+1$&$0$ \\
 \hline\hline
$[2,4]$& $[0,\frac{1}{3}]$&
  $P(u,v)$ &
 $u-6$&$\frac{u-4}{3}$&$1$&$\frac{8-u-3v}{3}$&$7-u$&$0$ \\
& &
 $N(u,v)$  &
 $0$&$0$&$0$&$0$&$0$&$0$ \\
 \hline
 $[2,4]$& $[\frac{1}{3},\frac{2}{3}]$&
  $P(u,v)$ &
 $u-6$&$\frac{u-2-6v}{3}$&$2-3v$&$\frac{8-u-3v}{3}$&$8-u-3v$&$0$ \\
& &
 $N(u,v)$  &
 $0$&$\frac{6v-2}{3}$&$3v-1$&$0$&$3v-1$&$0$ \\
 \hline\hline
 %%%%%%%%%%%%%%%%%%
$[4,5]$& $[0,\frac{5-u}{3}]$&
  $P(u,v)$ &
   $u-6$&$0$&$1$&$\frac{8-u-3v}{3}$&$7-u$&$0$ \\
& &
$N(u,v)$  &
  $0$&$0$&$0$&$0$&$0$&$0$ \\
 \hline
$[4,5]$& $[\frac{5-u}{3},\frac{1}{3}]$&
  $P(u,v)$ &
   $u-6$&$0$&$\frac{8-u-3v}{3}$&$\frac{8-u-3v}{3}$&$7-u$&$0$ \\
& &
$N(u,v)$  &
  $0$&$0$&$\frac{3v+u-5}{3}$&$0$&$0$&$0$ \\
  \hline
$[4,5]$& $[\frac{1}{3},\frac{u-2}{6}]$&
  $P(u,v)$ &
   $u-6$&$0$&$\frac{8-u-3v}{3}$&$\frac{8-u-3v}{3}$&$8-u-3v$&$0$ \\
& &
$N(u,v)$  &
  $0$&$0$&$\frac{3v+u-5}{3}$&$0$&$3v-1$&$0$ \\
   \hline
$[4,5]$& $[\frac{u-2}{6},\frac{2}{3}]$&
  $P(u,v)$ &
   $u-6$&$\frac{u-2-6v}{3}$&$2-3v$&$\frac{8-u-3v}{3}$&$8-u-3v$&$0$ \\
& &
$N(u,v)$  &
  $0$&$\frac{6v+2-u}{3}$&$3v-1$&$0$&$3v-1$&$0$ \\

 \hline
\hline
$[5,6]$& $[0,\frac{7-u}{6}]$&
  $P(u,v)$ &
 $u-6$&$0$&$\frac{7-u-2v}{2}$&$\frac{7-u-2v}{2}$&$7-u$&$0$ \\
& &
 $N(u,v)$  &
  $0$&$0$&$v$&$0$&$0$&$0$ \\
  \hline
  $[5,6]$& $[\frac{7-u}{6},\frac{1}{2}]$&
  $P(u,v)$ &
 $u-6$&$0$&$\frac{7-u-2v}{2}$&$\frac{7-u-2v}{2}$&$\frac{21-3u-6v}{2}$&$0$ \\
& &
 $N(u,v)$  &
  $0$&$0$&$v$&$0$&$\frac{6v+u-7}{2}$&$0$ \\
   \hline
  $[5,6]$& $[\frac{1}{2},\frac{9-u}{6}]$&
  $P(u,v)$ &
 $u-6$&$1-2v$&$\frac{9-u-6v}{2}$&$\frac{7-u-2v}{2}$&$\frac{21-3u-6v}{2}$&$0$ \\
& &
 $N(u,v)$  &
  $0$&$2v-1$&$3v-1$&$0$&$\frac{6v+u-7}{2}$&$0$ \\
 \hline\hline
$[6,7]$& $[0,\frac{7-u}{6}]$&
 $P(u,v)$ &
  $0$&$0$&$\frac{7-u-2v}{2}$&$\frac{7-u-2v}{2}$&$7-u$&$0$ \\
& &
 $N(u,v)$  &
  $0$&$0$&$v$&$0$&$0$&$0$ \\
  \hline
$[6,7]$& $[\frac{7-u}{6},\frac{7-u}{2}]$&
 $P(u,v)$ &
  $0$&$0$&$\frac{7-u-2v}{2}$&$\frac{7-u-2v}{2}$&$\frac{21-3u-6v}{2}$&$0$ \\
& &
 $N(u,v)$  &
  $0$&$0$&$0$&$0$&$\frac{6v+u-7}{2}$&$0$ \\
 \hline
\end{longtable}

\begin{longtable}{|c|c|c|cccccc|}
\caption{\mbox{Expressions for $P(u,v)$ and $N(u,v)$ in the~case $C=\overline{\alpha}_6$}}
\label{table:ZDuv6}
\endfirsthead
\endhead
\hline
$u$& $v$& $P(u,v)$ \& $N(u,v)$
& $\alpha_1$
& $\alpha_2$
& $\alpha_3$
& $\alpha_4$
& $\alpha_5$
& $\alpha_6$
\\
\hline
\hline
 $[0,1]$& $[0,u]$&
 $P(u,v)$ &
 $u-6$&$u-2$&$u$&$2$&$6-v$&$-v$ \\
& &
 $N(u,v)$  &
 $0$&$0$&$0$&$0$&$v$&$0$ \\
  \hline\hline
$[1,2]$& $[0,u-1]$&
 $P(u,v)$ &
 $u-6$&$\frac{u-4}{3}$&$1$&$2$&$7-u$&$-v$\\
& &
 $N(u,v)$  &
 $0$&$0$&$0$&$0$&$0$&$0$ \\
 \hline
$[1,2]$& $[u-1,1]$&
 $P(u,v)$ &
 $u-6$&$\frac{u-4}{3}$&$1$&$2$&$6-v$&$-v$\\
& &
 $N(u,v)$  &
 $0$&$0$&$0$&$0$&$v-u+1$&$0$ \\
 \hline\hline
$[2,4]$& $[0,1]$&
  $P(u,v)$ &
 $u-6$&$\frac{u-4}{3}$&$1$&$\frac{8-u}{3}$&$7-u$&$-v$ \\
& &
 $N(u,v)$  &
 $0$&$0$&$0$&$0$&$0$&$0$ \\
 \hline\hline
$[4,5]$& $[0,\frac{6-u}{2}]$&
  $P(u,v)$ &
   $u-6$&$0$&$1$&$\frac{8-u}{3}$&$7-u$&$-v$ \\
& &
$N(u,v)$  &
  $0$&$0$&$0$&$0$&$0$&$0$ \\
 \hline
$[4,5]$& $[\frac{6-u}{2},1]$&
  $P(u,v)$ &
   $-2v$&$0$&$1$&$\frac{8-u}{3}$&$7-u$&$-v$ \\
& &
$N(u,v)$  &
  $2v-6+u$&$0$&$0$&$0$&$0$&$0$ \\
 \hline
\hline
$[5,6]$& $[0,\frac{6-u}{2}]$&
  $P(u,v)$ &
 $6-u$&$0$&$\frac{7-u}{2}$&$\frac{7-u}{2}$&$7-u$&$-v$ \\
& &
 $N(u,v)$  &
  $0$&$0$&$0$&$0$&$0$&$0$ \\
  \hline
$[5,6]$& $[\frac{6-u}{2},\frac{7-u}{2}]$&
  $P(u,v)$ &
 $-2v$&$0$&$\frac{7-u}{2}$&$\frac{7-u}{2}$&$7-u$&$-v$ \\
& &
 $N(u,v)$  &
  $2v+u-6$&$0$&$0$&$0$&$0$&$0$ \\

 \hline\hline
$[6,7]$& $[0,\frac{7-u}{2}]$&
 $P(u,v)$ &
  $-2v$&$0$&$\frac{7-u}{2}$&$\frac{7-u}{2}$&$7-u$&$-v$ \\
& &
 $N(u,v)$  &
  $2v$&$0$&$0$&$0$&$0$&$0$ \\
\hline
\end{longtable}

\begin{longtable}{|c|c|c|cccccc|}
\caption{\mbox{Expressions for $P(u,v)$ and $N(u,v)$ in the~case $C=\overline{\alpha}_0$}}
\label{table:ZDuv0}
\endfirsthead
\endhead
\hline
$u$& $v$& $P(u,v)$ \& $N(u,v)$
& $\alpha_1$
& $\alpha_2$
& $\alpha_3$
& $\alpha_4$
& $\alpha_5$
& $\alpha_6$
\\
\hline
\hline
 $[0,1]$& $[0,u]$&
 $P(u,v)$ &
 $u-6-v$&$u-2-v$&$u-v$&$2$&$6$&$0$ \\
& &
 $N(u,v)$  &
 $0$&$0$&$0$&$0$&$0$&$0$ \\
  \hline\hline
$[1,2]$& $[0,2-u]$&
 $P(u,v)$ &
 $u-6-v$&$\frac{u-4-3v}{3}$&$1-v$&$2$&$7-u$&$0$\\
& &
 $N(u,v)$  &
 $0$&$0$&$0$&$0$&$0$&$0$ \\
 \hline
$[1,2]$& $[2-u,1]$&
 $P(u,v)$ &
 $u-6-v$&$\frac{u-4-3v}{3}$&$1-v$&$\frac{8-u-v}{3}$&$7-u$&$0$\\
& &
 $N(u,v)$  &
 $0$&$0$&$0$&$\frac{v+u-2}{3}$&$0$&$0$ \\
 \hline\hline
$[2,4]$& $[0,1]$&
  $P(u,v)$ &
 $u-6-v$&$\frac{u-4-3v}{3}$&$1-v$&$\frac{8-u-v}{3}$&$7-u$&$0$ \\
& &
 $N(u,v)$  &
 $0$&$0$&$0$&$\frac{v}{3}$&$0$&$0$ \\
 \hline\hline
$[4,5]$& $[0,5-u]$&
  $P(u,v)$ &
   $u-6-v$&$-v$&$1-v$&$\frac{8-u-v}{3}$&$7-u$&$0$ \\
& &
$N(u,v)$  &
  $0$&$0$&$0$&$\frac{v}{3}$&$0$&$0$ \\
 \hline
$[4,5]$& $[5-u,\frac{6-u}{2}]$&
  $P(u,v)$ &
   $u-6-v$&$-v$&$\frac{7-u-3v}{2}$&$\frac{7-u-v}{2}$&$7-u$&$0$ \\
& &
$N(u,v)$  &
  $0$&$0$&$\frac{u+v-5}{2}$&$\frac{u+3v-5}{6}$&$0$&$0$ \\
 \hline
$[4,5]$& $[\frac{6-u}{2},\frac{7-u}{3}]$&
  $P(u,v)$ &
   $-3v$&$-v$&$\frac{7-u-3v}{2}$&$\frac{7-u-v}{2}$&$7-u$&$0$ \\
& &
$N(u,v)$  &
  $u-6+2v$&$0$&$\frac{u+v-5}{2}$&$\frac{u+3v-5}{6}$&$0$&$0$ \\
 \hline
\hline
$[5,6]$& $[0,\frac{6-u}{2}]$&
  $P(u,v)$ &
 $u-6-v$&$-v$&$\frac{7-u-3v}{2}$&$\frac{7-u-v}{2}$&$7-u$&$0$ \\
& &
 $N(u,v)$  &
  $0$&$0$&$\frac{v}{2}$&$\frac{v}{2}$&$0$&$0$ \\
  \hline
$[5,6]$& $[\frac{6-u}{2},\frac{7-u}{3}]$&
  $P(u,v)$ &
 $-3v$&$-v$&$\frac{7-u-3v}{2}$&$\frac{7-u-v}{2}$&$7-u$&$0$ \\
& &
 $N(u,v)$  &
  $u-6+2v$&$0$&$\frac{v}{2}$&$\frac{v}{2}$&$0$&$0$ \\
 \hline\hline
$[6,7]$& $[0,\frac{7-u}{3}]$&
 $P(u,v)$ &
  $-2v$&$-v$&$\frac{7-u-3v}{2}$&$\frac{7-u-v}{2}$&$7-u$&$0$ \\
& &
 $N(u,v)$  &
  $2v$&$0$&$\frac{v}{2}$&$\frac{v}{2}$&$0$&$0$ \\
 \hline
\end{longtable}

\begin{longtable}{|c|c|cccccccc|}
\caption{Zariski decomposition of the~divisor $\zeta_0^*(L_u)$}
\label{table:Part3-ZDu}
\endfirsthead
\endhead
\hline
$u$& $P_{\widetilde{W}}(u)$ \& $N_{\widetilde{W}}(u)$
&\ \  $\widetilde{T}_0$ \ \
&\ \  $\widetilde{T}_1$ \ \
&\ \  $\widetilde{T}_2$ \ \
&\ \  $\widetilde{T}_3$ \ \
&\ \  $\widetilde{T}_7$ \ \
&\ \  $\widetilde{T}_8$ \ \
&\ \  $\widetilde{T}_9$ \ \
&\ \  $\widetilde{T}_{10}$ \ \
\\
\hline
\hline
 $[0,1]$&
 $P_{\widetilde{W}}(u)$ &
 $10-u$&$1$&$1$&$2$&$8$&$5$&$8$&$5$ \\
&
 $N_{\widetilde{W}}(u)$ &
 $0$&$0$&$0$&$0$&$0$&$0$&$0$&$0$ \\
  \hline
 $[1,2]$&
 $P_{\widetilde{W}}(u)$ &
 $10-u$&$1$&$1$&$2$&$8$&$\frac{11-u}{2}$&$\frac{35-3u}{4}$&$5$ \\
&
 $N_{\widetilde{W}}(u)$ &
 $0$&$0$&$0$&$0$&$0$&$\frac{u-1}{2}$&$\frac{3(u-1)}{4}$&$0$ \\
  \hline
 $[2,3]$&
 $P_{\widetilde{W}}(u)$ &
 $10-u$&$1$&$1$&$2$&$10-u$&$\frac{11-u}{2}$&$\frac{35-3u}{4}$&$5$ \\
&
 $N_{\widetilde{W}}(u)$ &
 $0$&$0$&$0$&$0$&$u-2$&$\frac{u-1}{2}$&$\frac{3(u-1)}{4}$&$0$ \\
  \hline
 $[3,5]$&
 $P_{\widetilde{W}}(u)$ &
 $10-u$&$1$&$1$&$\frac{11-u}{4}$&$10-u$&$\frac{11-u}{2}$&$\frac{35-3u}{4}$&$5$ \\
&
 $N_{\widetilde{W}}(u)$ &
 $0$&$0$&$0$&$\frac{u-3}{4}$&$u-2$&$\frac{u-1}{2}$&$\frac{3(u-1)}{4}$&$0$ \\
  \hline
 $[5,7]$&
 $P_{\widetilde{W}}(u)$ &
 $10-u$&$1$&$1$&$\frac{11-u}{4}$&$10-u$&$\frac{11-u}{2}$&$10-u$&$10-u$ \\
&
 $N_{\widetilde{W}}(u)$ &
 $0$&$0$&$0$&$\frac{u-3}{4}$&$u-2$&$\frac{u-1}{2}$&$u-2$&$u-5$ \\
  \hline
 $[7,8]$&
 $P_{\widetilde{W}}(u)$ &
 $10-u$&$1$&$\frac{10-u}{3}$&$\frac{10-u}{3}$&$10-u$&
 $\frac{2(10-u)}{3}$&$10-u$&$10-u$ \\
&
 $N_{\widetilde{W}}(u)$ &
 $0$&$0$&$\frac{u-7}{3}$&$\frac{u-4}{3}$&$u-2$&$\frac{2u-5}{3}$&$u-2$&$u-5$ \\
  \hline
 $[8,10]$&
 $P_{\widetilde{W}}(u)$ &
 $10-u$&$\frac{10-u}{2}$&$\frac{10-u}{3}$&$\frac{10-u}{3}$&$10-u$&
 $\frac{2(10-u)}{3}$&$10-u$&$10-u$ \\
&
 $N_{\widetilde{W}}(u)$ &
 $0$&$\frac{u-8}{2}$&$\frac{u-7}{3}$&$\frac{u-4}{3}$&$u-2$&$\frac{2u-5}{3}$&$u-2$&$u-5$
 \\
 \hline
\end{longtable}

\begin{longtable}{|c|c|cccccc|}
\caption{Expressions for $\widetilde{P}(u)$ and $\widetilde{N}(u)$}
\label{table:Part3-widetildePNu}
\endfirsthead
\endhead
\hline
$u$& $\widetilde{P}(u)$ \& $\widetilde{N}(u)$
&\ \  $\alpha_1$ \ \
&\ \  $\alpha_2$ \ \
&\ \  $\alpha_3$ \ \
&\ \  $\alpha_4$ \ \
&\ \  $\alpha_5$ \ \
&\ \  $\alpha_6$ \ \
\\
\hline
\hline
 $[0,1]$&
 $\widetilde{P}(u)$ &
 $\frac{u-8}{2}$&$u-2$&$\frac{u}{2}$&$2$&$4$&$0$ \\
&
 $\widetilde{N}(u)$  &
 $0$&$0$&$0$&$0$&$0$&$0$ \\
  \hline
 $[1,2]$&
 $\widetilde{P}(u)$  &
 $\frac{u-8}{2}$&$\frac{u-5}{4}$&$\frac{1}{2}$&$2$&$4$&$0$\\
&
 $\widetilde{N}(u)$  &
 $0$&$\frac{3(u-1)}{4}$&$\frac{u-1}{2}$&$0$&$0$&$0$ \\
  \hline
 $[2,3]$&
 $\widetilde{P}(u)$  &
 $\frac{u-8}{2}$&$\frac{u-5}{4}$&$\frac{1}{2}$&$2$&$\frac{10-u}{2}$&$0$\\
&
 $\widetilde{N}(u)$  &
 $0$&$\frac{3(u-1)}{4}$&$\frac{u-1}{2}$&$0$&$\frac{u-2}{2}$&$0$ \\
  \hline
 $[3,5]$&
 $\widetilde{P}(u)$  &
 $\frac{u-8}{2}$&$\frac{u-5}{4}$&$\frac{1}{2}$&$\frac{11-u}{4}$&$\frac{10-u}{2}$&$0$\\
&
 $\widetilde{N}(u)$  &
 $0$&$\frac{3(u-1)}{4}$&$\frac{u-1}{2}$&$\frac{u-3}{4}$&$\frac{u-2}{2}$&$0$ \\
  \hline
 $[5,7]$&
 $\widetilde{P}(u)$  &
 $\frac{u-8}{2}$&$0$&$\frac{1}{2}$&$\frac{11-u}{4}$&$\frac{10-u}{2}$&$0$\\
&
 $\widetilde{N}(u)$  &
 $0$&$u-2$&$\frac{u-1}{2}$&$\frac{u-3}{4}$&$\frac{u-2}{2}$&$0$ \\
  \hline
 $[7,8]$&
 $\widetilde{P}(u)$  &
 $\frac{u-8}{2}$&$0$&$\frac{10-u}{6}$&$\frac{10-u}{3}$&$\frac{10-u}{2}$&$0$\\
&
 $\widetilde{N}(u)$  &
 $0$&$u-2$&$\frac{2u-5}{3}$&$\frac{u-4}{3}$&$\frac{u-2}{2}$&$0$ \\
  \hline
 $[8,10]$&
 $\widetilde{P}(u)$  &
 $0$&$0$&$\frac{10-u}{6}$&$\frac{10-u}{3}$&$\frac{10-u}{2}$&$0$\\
&
 $\widetilde{N}(u)$  &
 $\frac{u-8}{2}$&$u-2$&$\frac{2u-5}{3}$&$\frac{u-4}{3}$&$\frac{u-2}{2}$&$0$ \\
\hline
\end{longtable}

\begin{longtable}{|c|c|c|c|c|c|c|c|c|}
\caption{Values of $t(u)$}
\label{table:t-u-2}
\endfirsthead
\endhead
  \hline
  \diagbox{$C$}{$u$}     &  $[0,1]$      & $[1,2]$       & $[2,3]$         & $[3,5]$       & [5,6]   & $[6,7]$        & $[7,8]$ & $[8,10]$  \\
  \hline
  \hline
  $\overline{\alpha}_1$  & $\frac{u}{2}$ & $\frac{u}{2}$ & $1$             & $1$            & $1$      & $1$            & $1$ & $\frac{10-u}{2}$ \\
  \hline
  $\overline{\alpha}_4$  & $\frac{u}{4}$ & $\frac{u}{4}$ & $\frac{u}{4}$   & $\frac{3}{4}$  &  $\frac{3}{4}$   & $\frac{3}{4}$  & $\frac{16-u}{12}$ & $\frac{10-u}{3}$ \\
  \hline
  $\overline{\alpha}_6$  & $\frac{u}{2}$ & $\frac{1}{2}$ &  $\frac{1}{2}$  & $\frac{1}{2}$  &  $\frac{1}{2}$   & $\frac{1}{2}$  & $\frac{10-u}{6}$ & $\frac{10-u}{6}$ \\
  \hline
  $\overline{\alpha}_0$  & $\frac{u}{2}$ & $\frac{1}{2}$ &  $\frac{1}{2}$  & $\frac{1}{2}$  &  $\frac{1}{2}$    & $\frac{10-u}{8}$  & $\frac{10-u}{8}$ & $\frac{10-u}{8}$ \\
  \hline
\end{longtable}

\begin{longtable}{|c|c|c|cccccc|}
\caption{\mbox{Expressions for $P(u,v)$ and $N(u,v)$ in the~case $C=\overline{\alpha}_1$}}
\label{table:Part3-ZDuv1}
\endfirsthead
\endhead
\hline
$u$& $v$& $P(u,v)$ \& $N(u,v)$
& $\alpha_1$
& $\alpha_2$
& $\alpha_3$
& $\alpha_4$
& $\alpha_5$
& $\alpha_6$
\\
\hline
\hline
 $[0,1]$& $[0,\frac{u}{2}]$&
 $P(u,v)$ &
 $\frac{u-8}{2}-v$&$u-2-2v$&$\frac{u}{2}-v$&$2$&$4$&$0$ \\
& &
 $N(u,v)$  &
 $0$&$2v$&$v$&$0$&$0$&$0$ \\
  \hline\hline
$[1,2]$& $[0,\frac{u-1}{2}]$&
 $P(u,v)$ &
 $\frac{u-8}{2}-v$&$\frac{u-5-2v}{4}$&$\frac{1}{2}$&$2$&$4$&$0$\\
& &
 $N(u,v)$  &
 $0$&$\frac{v}{2}$&$0$&$0$&$0$&$0$ \\
 \hline
$[1,2]$& $[\frac{u-1}{2},\frac{u}{2}]$&
 $P(u,v)$ &
 $\frac{u-8}{2}-v$&$u-2-2v$&$\frac{u-2v}{2}$&$2$&$4$&$0$\\
& &
 $N(u,v)$  &
 $0$&$\frac{3-3u+8v}{4}$&$\frac{2v-u+1}{2}$&$0$&$0$&$0$ \\
  \hline\hline
$[2,3]$& $[0,\frac{u-1}{2}]$&
 $P(u,v)$ &
 $\frac{u-8}{2}-v$&$\frac{u-5-2v}{4}$&$\frac{1}{2}$&$2$&$\frac{10-u}{2}$&$0$\\
& &
 $N(u,v)$  &
 $0$&$\frac{v}{2}$&$0$&$0$&$0$&$0$ \\
 \hline
$[2,3]$& $[\frac{u-1}{2},1]$&
 $P(u,v)$ &
 $\frac{u-8}{2}-v$&$u-2-2v$&$\frac{u-2v}{2}$&$2$&$\frac{10-u}{2}$&$0$\\
& &
 $N(u,v)$  &
 $0$&$\frac{3-3u+8v}{4}$&$\frac{2v-u+1}{2}$&$0$&$0$&$0$ \\
 \hline\hline
$[3,5]$& $[0,1]$&
  $P(u,v)$ &
 $\frac{u-8}{2}-v$&$\frac{u-5-2v}{4}$&$\frac{1}{2}$&$\frac{11-u}{4}$&
 $\frac{10-u}{2}$&$0$ \\
& &
 $N(u,v)$  &
 $0$&$\frac{v}{2}$&$0$&$0$&$0$&$0$ \\
 \hline\hline
$[5,7]$& $[0,\frac{u-5}{2}]$&
  $P(u,v)$ &
   $\frac{u-8}{2}-v$&$0$&$\frac{1}{2}$&$\frac{11-u}{4}$&$\frac{10-u}{2}$&$0$ \\
& &
$N(u,v)$  &
  $0$&$0$&$0$&$0$&$0$&$0$ \\
 \hline
$[5,7]$& $[\frac{u-5}{2},1]$&
  $P(u,v)$ &
  $\frac{u-8}{2}-v$&$\frac{u-5-2v}{4}$&$\frac{1}{2}$&$\frac{11-u}{4}$
  &$\frac{10-u}{2}$&$0$ \\
& &
$N(u,v)$  &
  $0$&$\frac{2v-u+5}{4}$&$0$&$0$&$0$&$0$ \\
 \hline
 \hline
$[7,8]$& $[0,1]$&
  $P(u,v)$ &
 $\frac{u-8}{2}-v$&$0$&$\frac{10-u}{6}$&$\frac{10-u}{3}$&$\frac{10-u}{2}$&$0$ \\
& &
 $N(u,v)$  &
  $0$&$0$&$0$&$0$&$0$&$0$ \\
 \hline\hline
$[8,10]$& $[0,\frac{10-u}{2}]$&
 $P(u,v)$ &
  $-v$&$0$&$\frac{10-u}{6}$&$\frac{10-u}{3}$&$\frac{10-u}{2}$&$0$ \\
& &
 $N(u,v)$  &
  $0$&$0$&$0$&$0$&$0$&$0$ \\
 \hline

\end{longtable}

\begin{longtable}{|c|c|c|cccccc|}
\caption{\mbox{Expressions for $P(u,v)$ and $N(u,v)$ in the~case $C=\overline{\alpha}_4$}}
\label{table:Part3-ZDuv4}
\endfirsthead
\endhead
\hline
$u$& $v$& $P(u,v)$ \& $N(u,v)$
& $\alpha_1$
& $\alpha_2$
& $\alpha_3$
& $\alpha_4$
& $\alpha_5$
& $\alpha_6$
\\
\hline
\hline
 $[0,1]$& $[0,\frac{u}{4}]$&
 $P(u,v)$ &
 $\frac{u-8}{2}$&$u-2-3v$&$\frac{u}{2}-2v$&$2-v$&$4-2v$&$0$ \\
& &
 $N(u,v)$  &
 $0$&$3v$&$2v$&$0$&$2v$&$0$ \\
  \hline\hline
$[1,2]$& $[0,\frac{u-1}{4}]$&
 $P(u,v)$ &
 $\frac{u-8}{2}$&$\frac{u-5}{4}$&$\frac{1}{2}$&$2-v$&$4-2v$&$0$\\
& &
 $N(u,v)$  &
 $0$&$0$&$0$&$0$&$2v$&$0$ \\
 \hline
$[1,2]$& $[\frac{u-1}{4},\frac{u}{4}]$&
 $P(u,v)$ &
 $\frac{u-8}{2}$&$u-2-3v$&$\frac{u}{2}-2v$&$2-v$&$4-2v$&$0$\\
& &
 $N(u,v)$  &
 $0$&$\frac{3(4v-u+1)}{4}$&$\frac{4v-u+1}{2}$&$0$&$2v$&$0$ \\
  \hline\hline
$[2,3]$& $[0,\frac{u-2}{4}]$&
 $P(u,v)$ &
 $\frac{u-8}{2}$&$\frac{u-5}{4}$&$\frac{1}{2}$&$2-v$&$\frac{10-u}{2}$&$0$\\
& &
 $N(u,v)$  &
 $0$&$0$&$0$&$0$&$0$&$0$ \\
 \hline
$[2,3]$& $[\frac{u-2}{4},\frac{u-1}{4}]$&
 $P(u,v)$ &
 $\frac{u-8}{2}$&$\frac{u-5}{4}$&$\frac{1}{2}$&$2-v$&$4-2v$&$0$\\
& &
 $N(u,v)$  &
 $0$&$0$&$0$&$0$&$\frac{4v-u+2}{2}$&$0$ \\
 \hline
$[2,3]$& $[\frac{u-1}{4},\frac{u}{4}]$&
 $P(u,v)$ &
 $\frac{u-8}{2}$&$u-2-3v$&$\frac{u}{2}-2v$&$2-v$&$4-2v$&$0$\\
& &
 $N(u,v)$  &
 $0$&$\frac{3(4v-u+1)}{4}$&$\frac{4v-u+1}{2}$&$0$&$\frac{4v-u+2}{2}$&$0$ \\
 \hline\hline
$[3,5]$& $[0,\frac{1}{4}]$&
  $P(u,v)$ &
 $\frac{u-8}{2}$&$\frac{u-5}{4}$&$\frac{1}{2}$&$\frac{11-u-4v}{4}$
 &$\frac{10-u}{2}$&$0$ \\
& &
 $N(u,v)$  &
 $0$&$0$&$0$&$0$&$0$&$0$ \\
 \hline
 $[3,5]$& $[\frac{1}{4},\frac{1}{2}]$&
  $P(u,v)$ &
 $\frac{u-8}{2}$&$\frac{u-5}{4}$&$\frac{1}{2}$&$\frac{11-u-4v}{4}$&
 $\frac{11-u-4v}{2}$&$0$ \\
& &
 $N(u,v)$  &
 $0$&$0$&$0$&$0$&$\frac{4v-1}{2}$&$0$ \\
 \hline
 $[3,5]$& $[\frac{1}{2},\frac{3}{4}]$&
  $P(u,v)$ &
 $\frac{u-8}{2}$&$\frac{u+1-12v}{4}$&$\frac{3-2v}{2}$&$\frac{11-u-4v}{4}$&
 $\frac{11-u-4v}{2}$&$0$ \\
& &
 $N(u,v)$  &
 $0$&$\frac{3(2v-1)}{2}$&$2v-1$&$0$&$\frac{4v-1}{2}$&$0$ \\
 \hline\hline
$[5,6]$& $[0,\frac{1}{4}]$&
  $P(u,v)$ &
   $\frac{u-8}{2}$&$0$&$\frac{1}{2}$&$\frac{11-u-4v}{4}$&$\frac{10-u}{2}$&$0$ \\
& &
$N(u,v)$  &
  $0$&$0$&$0$&$0$&$0$&$0$ \\
 \hline
$[5,6]$& $[\frac{1}{4},\frac{7-u}{4}]$&
  $P(u,v)$ &
   $\frac{u-8}{2}$&$0$&$\frac{1}{2}$&$\frac{11-u-4v}{4}$&$\frac{11-u-4v}{2}$&$0$ \\
& &
$N(u,v)$  &
  $0$&$0$&$0$&$0$&$\frac{4v-1}{2}$&$0$ \\
 \hline
$[5,6]$& $[\frac{7-u}{4},\frac{1+u}{12}]$&
  $P(u,v)$ &
   $\frac{u-8}{2}$&$0$&$\frac{11-u-4v}{8}$&$\frac{11-u-4v}{4}$
   &$\frac{11-u-4v}{2}$&$0$ \\
& &
$N(u,v)$  &
  $0$&$0$&$\frac{4v+u-7}{8}$&$0$&$\frac{4v-1}{2}$&$0$ \\
 \hline
$[5,6]$& $[\frac{1+u}{12},\frac{3}{4}]$&
  $P(u,v)$ &
   $\frac{u-8}{2}$&$\frac{1+u-6v}{4}$&$\frac{3-4v}{2}$&$\frac{11-u-4v}{4}$
   &$\frac{11-u-4v}{2}$&$0$ \\
& &
$N(u,v)$  &
  $0$&$\frac{12v+u-1}{4}$&$2v-1$&$0$&$\frac{4v-1}{2}$&$0$ \\
 \hline\hline
$[6,7]$& $[0,\frac{7-u}{4}]$&
  $P(u,v)$ &
   $\frac{u-8}{2}$&$0$&$\frac{1}{2}$&$\frac{11-u-4v}{4}$&$\frac{10-u}{2}$&$0$ \\
& &
$N(u,v)$  &
  $0$&$0$&$0$&$0$&$0$&$0$ \\
 \hline
$[6,7]$& $[\frac{7-u}{4},\frac{1}{4}]$&
  $P(u,v)$ &
   $\frac{u-8}{2}$&$0$&$\frac{11-u-4v}{8}$&$\frac{11-u-4v}{4}$&
   $\frac{10-u}{2}$&$0$ \\
& &
$N(u,v)$  &
  $0$&$0$&$\frac{4v+u-7}{8}$&$0$&$0$&$0$ \\
 \hline
$[6,7]$& $[\frac{1}{4},\frac{1+u}{12}]$&
  $P(u,v)$ &
   $\frac{u-8}{2}$&$0$&$\frac{11-u-4v}{8}$&$\frac{11-u-4v}{4}$
   &$\frac{11-u-4v}{2}$&$0$ \\
& &
$N(u,v)$  &
  $0$&$0$&$\frac{4v+u-7}{8}$&$0$&$\frac{4v-1}{2}$&$0$ \\
 \hline
$[6,7]$& $[\frac{1+u}{12},\frac{3}{4}]$&
  $P(u,v)$ &
   $\frac{u-8}{2}$&$\frac{1+u-6v}{4}$&$\frac{3-4v}{2}$&$\frac{11-u-4v}{4}$
   &$\frac{11-u-4v}{2}$&$0$ \\
& &
$N(u,v)$  &
  $0$&$\frac{12v+u-1}{4}$&$2v-1$&$0$&$\frac{4v-1}{2}$&$0$ \\
 \hline\hline
$[7,8]$& $[0,\frac{10-u}{12}]$&
  $P(u,v)$ &
 $\frac{u-8}{2}$&$0$&$\frac{10-u-3v}{6}$&$\frac{10-u-3v}{3}$&$\frac{10-u}{2}$&$0$ \\
& &
 $N(u,v)$  &
  $0$&$0$&$\frac{v}{2}$&$0$&$0$&$0$ \\
  \hline
  $[7,8]$& $[\frac{10-u}{12},\frac{2}{3}]$&
  $P(u,v)$ &
 $\frac{u-8}{2}$&$0$&$\frac{10-u-3v}{6}$&$\frac{10-u-3v}{3}$&
 $\frac{2(10-u-3v)}{3}$&$0$ \\
& &
 $N(u,v)$  &
  $0$&$0$&$\frac{v}{2}$&$0$&$\frac{12v+u-10}{6}$&$0$ \\
   \hline
  $[7,8]$& $[\frac{2}{3},\frac{16-u}{12}]$&
  $P(u,v)$ &
 $\frac{u-8}{2}$&$2-3v$&$\frac{16-u-12v}{6}$&$\frac{10-u-3v}{3}$&
 $\frac{2(10-u-3v)}{3}$&$0$ \\
& &
 $N(u,v)$  &
  $0$&$3v-2$&$2v-1$&$0$&$\frac{12v+u-10}{6}$&$0$ \\
 \hline\hline
$[8,10]$& $[0,\frac{10-u}{12}]$&
 $P(u,v)$ &
  $0$&$0$&$\frac{10-u-3v}{6}$&$\frac{10-u-3v}{3}$&$\frac{10-u}{2}$&$0$ \\
& &
 $N(u,v)$  &
  $0$&$0$&$\frac{v}{2}$&$0$&$0$&$0$ \\
  \hline
$[8,10]$& $[\frac{10-u}{12},\frac{10-u}{3}]$&
 $P(u,v)$ &
  $0$&$0$&$\frac{10-u-3v}{6}$&$\frac{10-u-3v}{3}$&
  $\frac{2(10-u-3v)}{3}$&$0$ \\
& &
 $N(u,v)$  &
  $0$&$0$&$\frac{v}{2}$&$0$&$\frac{12v+u-10}{6}$&$0$ \\
 \hline
 \end{longtable}

\begin{longtable}{|c|c|c|cccccc|}
\caption{\mbox{Expressions for $P(u,v)$ and $N(u,v)$ in the~case $C=\overline{\alpha}_6$}}
\label{table:Part3-ZDuv6}
\endfirsthead
\endhead
\hline
$u$& $v$& $P(u,v)$ \& $N(u,v)$
& $\alpha_1$
& $\alpha_2$
& $\alpha_3$
& $\alpha_4$
& $\alpha_5$
& $\alpha_6$
\\
\hline
\hline
 $[0,1]$& $[0,\frac{u}{2}]$&
 $P(u,v)$ &
 $\frac{u-8}{2}$&$u-2$&$\frac{u}{2}$&$2$&$4-v$&$-v$ \\
& &
 $N(u,v)$  &
 $0$&$0$&$0$&$0$&$v$&$0$ \\
\hline
\hline
 $[1,2]$& $[0,\frac{1}{2}]$&
 $P(u,v)$ &
 $\frac{u-8}{2}$&$\frac{u-5}{4}$&$\frac{u}{2}$&$2$&$4-v$&$-v$ \\
& &
 $N(u,v)$  &
 $0$&$0$&$0$&$0$&$v$&$0$ \\
  \hline\hline
$[2,3]$& $[0,\frac{u-2}{2}]$&
 $P(u,v)$ &
 $\frac{u-8}{2}$&$\frac{u-5}{4}$&$\frac{1}{2}$&$2$&$\frac{10-u}{2}$&$-v$\\
& &
 $N(u,v)$  &
 $0$&$0$&$0$&$0$&$0$&$0$ \\
 \hline
$[2,3]$& $[\frac{u-2}{2},\frac{1}{2}]$&
 $P(u,v)$ &
 $\frac{u-8}{2}$&$\frac{u-5}{4}$&$\frac{1}{2}$&$\frac{10-u}{2}$&
 $4-v$&$-v$\\
& &
 $N(u,v)$  &
 $0$&$0$&$0$&$0$&$\frac{2v+2-u}{2}$&$0$ \\
 \hline\hline
$[3,5]$& $[0,\frac{1}{2}]$&
 $P(u,v)$ &
 $\frac{u-8}{2}$&$\frac{u-5}{4}$&$\frac{1}{2}$&$\frac{11-u}{4}$&
 $\frac{10-u}{2}$&$-v$\\
& &
 $N(u,v)$  &
 $0$&$0$&$0$&$0$&$0$&$0$ \\
 \hline\hline
$[5,7]$& $[0,\frac{8-u}{6}]$&
 $P(u,v)$ &
 $\frac{u-8}{2}$&$0$&$\frac{1}{2}$&$\frac{11-u}{4}$&
 $\frac{10-u}{2}$&$-v$\\
& &
 $N(u,v)$  &
 $0$&$0$&$0$&$0$&$0$&$0$ \\
\hline
$[5,7]$& $[\frac{8-u}{6},\frac{1}{2}]$&
 $P(u,v)$ &
 $-3v$&$0$&$\frac{1}{2}$&$\frac{11-u}{4}$&
 $\frac{10-u}{2}$&$-v$\\
& &
 $N(u,v)$  &
 $\frac{6v+u-8}{2}$&$0$&$0$&$0$&$0$&$0$ \\
 \hline\hline
$[7,8]$& $[0,\frac{8-u}{6}]$&
 $P(u,v)$ &
 $\frac{u-8}{2}$&$0$&$\frac{10-u}{6}$&$\frac{10-u}{3}$&
 $\frac{10-u}{2}$&$-v$\\
& &
 $N(u,v)$  &
 $0$&$0$&$0$&$0$&$0$&$0$ \\
\hline
$[7,8]$& $[\frac{8-u}{6},\frac{10-u}{6}]$&
 $P(u,v)$ &
 $-3v$&$0$&$\frac{10-u}{6}$&$\frac{10-u}{3}$&
 $\frac{10-u}{2}$&$-v$\\
& &
 $N(u,v)$  &
 $\frac{6v+u-8}{2}$&$0$&$0$&$0$&$0$&$0$ \\
 \hline\hline
$[8,10]$& $[0,\frac{10-u}{6}]$&
 $P(u,v)$ &
 $-3v$&$0$&$\frac{10-u}{6}$&$\frac{10-u}{3}$&
 $\frac{10-u}{2}$&$-v$\\
& &
 $N(u,v)$  &
 $3v$&$0$&$0$&$0$&$0$&$0$ \\
 \hline

 \end{longtable}

\begin{longtable}{|c|c|c|cccccc|}
\caption{\mbox{Expressions for $P(u,v)$ and $N(u,v)$ in the~case $C=\overline{\alpha}_0$}}
\label{table:Part3-ZDuv0}
\endfirsthead
\endhead
\hline
$u$& $v$& $P(u,v)$ \& $N(u,v)$
& $\alpha_1$
& $\alpha_2$
& $\alpha_3$
& $\alpha_4$
& $\alpha_5$
& $\alpha_6$
\\
\hline
\hline
 $[0,1]$& $[0,\frac{u}{2}]$&
 $P(u,v)$ &
 $\frac{u-8}{2}-v$&$u-2-2v$&$\frac{u}{2}-v$&$2$&$4$&$0$ \\
& &
 $N(u,v)$  &
 $0$&$0$&$0$&$0$&$0$&$0$ \\
  \hline\hline
 $[1,2]$& $[0,\frac{1}{2}]$&
 $P(u,v)$ &
 $\frac{u-8}{2}-v$&$\frac{u-5-8v}{4}$&$\frac{1}{2}-v$&$2$&$4$&$0$ \\
& &
 $N(u,v)$  &
 $0$&$0$&$0$&$0$&$0$&$0$ \\
\hline\hline
 $[2,3]$& $[0,\frac{3-u}{2}]$&
 $P(u,v)$ &
 $\frac{u-8}{2}-v$&$\frac{u-5-8v}{4}$&$\frac{1}{2}-v$&$2$&$\frac{10-u}{2}$&$0$ \\
& &
 $N(u,v)$  &
 $0$&$0$&$0$&$0$&$0$&$0$ \\
\hline
 $[2,3]$& $[\frac{3-u}{2},\frac{1}{2}]$&
 $P(u,v)$ &
 $\frac{u-8}{2}-v$&$\frac{u-5-8v}{4}$&$\frac{1}{2}-v$&$\frac{11-u-2v}{4}$
 &$\frac{10-u}{2}$&$0$ \\
& &
 $N(u,v)$  &
 $0$&$0$&$0$&$\frac{2v+u-3}{4}$&$0$&$0$ \\
\hline\hline
 $[3,5]$& $[0,\frac{1}{2}]$&
 $P(u,v)$ &
 $\frac{u-8}{2}-v$&$\frac{u-5-8v}{4}$&$\frac{1}{2}-v$&$\frac{11-u-2v}{4}$
 &$\frac{10-u}{2}$&$0$ \\
& &
 $N(u,v)$  &
 $0$&$0$&$0$&$\frac{v}{2}$&$0$&$0$ \\
\hline\hline
 $[5,6]$& $[0,\frac{8-u}{6}]$&
 $P(u,v)$ &
 $\frac{u-8}{2}-v$&$-2v$&$\frac{1}{2}-v$&$\frac{11-u-2v}{4}$
 &$\frac{10-u}{2}$&$0$ \\
& &
 $N(u,v)$  &
 $0$&$0$&$0$&$\frac{v}{2}$&$0$&$0$ \\
\hline
 $[5,6]$& $[\frac{8-u}{6},\frac{1}{2}]$&
 $P(u,v)$ &
 $-4v$&$-2v$&$\frac{1}{2}-v$&$\frac{11-u-2v}{4}$
 &$\frac{10-u}{2}$&$0$ \\
& &
 $N(u,v)$  &
 $\frac{6v+u-8}{2}$&$0$&$0$&$\frac{v}{2}$&$0$&$0$ \\
\hline\hline
 $[6,\frac{13}{2}]$& $[0,\frac{8-u}{6}]$&
 $P(u,v)$ &
 $\frac{u-8}{2}-v$&$-2v$&$\frac{1}{2}-v$&$\frac{11-u-2v}{4}$
 &$\frac{10-u}{2}$&$0$ \\
& &
 $N(u,v)$  &
 $0$&$0$&$0$&$\frac{v}{2}$&$0$&$0$ \\
\hline
 $[6,\frac{13}{2}]$& $[\frac{8-u}{6},\frac{7-u}{2}]$&
 $P(u,v)$ &
 $-4v$&$-2v$&$\frac{1}{2}-v$&$\frac{11-u-2v}{4}$
 &$\frac{10-u}{2}$&$0$ \\
& &
 $N(u,v)$  &
 $\frac{6v+u-8}{2}$&$0$&$0$&$\frac{v}{2}$&$0$&$0$ \\
\hline
 $[6,\frac{13}{2}]$& $[\frac{7-u}{2},\frac{10-u}{8}]$&
 $P(u,v)$ &
 $-4v$&$-2v$&$\frac{10-u-8v}{6}$&$\frac{10-u-2v}{3}$
 &$\frac{10-u}{2}$&$0$ \\
& &
 $N(u,v)$  &
 $\frac{6v+u-8}{2}$&$0$&$\frac{2v+u-7}{6}$&$\frac{8v+u-7}{12}$&$0$&$0$ \\
\hline\hline
 $[\frac{13}{2},7]$& $[0,\frac{7-u}{2}]$&
 $P(u,v)$ &
 $\frac{u-8}{2}-v$&$-2v$&$\frac{1}{2}-v$&$\frac{11-u-2v}{4}$
 &$\frac{10-u}{2}$&$0$ \\
& &
 $N(u,v)$  &
 $0$&$0$&$0$&$\frac{v}{2}$&$0$&$0$ \\
\hline
 $[\frac{13}{2},7]$& $[\frac{7-u}{2},\frac{8-u}{6}]$&
 $P(u,v)$ &
 $\frac{u-8}{2}-v$&$-2v$&$\frac{10-u-8v}{6}$&$\frac{10-u-2v}{3}$
 &$\frac{10-u}{2}$&$0$ \\
& &
 $N(u,v)$  &
 $0$&$0$&$\frac{2v+u-7}{6}$&$\frac{8v+u-7}{12}$&$0$&$0$ \\
\hline
 $[\frac{13}{2},7]$& $[\frac{8-u}{6},\frac{10-u}{8}]$&
 $P(u,v)$ &
 $-4v$&$-2v$&$\frac{10-u-8v}{6}$&$\frac{10-u-2v}{3}$
 &$\frac{10-u}{2}$&$0$ \\
& &
 $N(u,v)$  &
 $\frac{6v+u-8}{2}$&$0$&$\frac{2v+u-7}{6}$&$\frac{8v+u-7}{12}$&$0$&$0$ \\
\hline\hline
 $[7,8]$& $[0,\frac{8-u}{6}]$&
 $P(u,v)$ &
 $\frac{u-8}{2}-v$&$-2v$&$\frac{10-u-8v}{6}$&$\frac{10-u-2v}{3}$
 &$\frac{10-u}{2}$&$0$ \\
& &
 $N(u,v)$  &
 $0$&$0$&$\frac{v}{3}$&$\frac{2v}{3}$&$0$&$0$ \\
\hline
 $[7,8]$& $[\frac{8-u}{6},\frac{10-u}{8}]$&
 $P(u,v)$ &
 $-4v$&$-2v$&$\frac{10-u-8v}{6}$&$\frac{10-u-2v}{3}$
 &$\frac{10-u}{2}$&$0$ \\
& &
 $N(u,v)$  &
 $\frac{6v+u-8}{2}$&$0$&$\frac{v}{3}$&$\frac{2v}{3}$&$0$&$0$ \\
\hline\hline
 $[8,10]$& $[0,\frac{10-u}{8}]$&
 $P(u,v)$ &
 $-4v$&$-2v$&$\frac{10-u-8v}{6}$&$\frac{10-u-2v}{3}$
 &$\frac{10-u}{2}$&$0$ \\
& &
 $N(u,v)$  &
 $3v$&$0$&$\frac{v}{3}$&$\frac{2v}{3}$&$0$&$0$ \\
 \hline
\end{longtable}

\end{document}